\documentclass[12pt]{article}
\usepackage[T1]{fontenc}
\usepackage{multicol}
\usepackage{mathtools}
\usepackage{authblk}
\usepackage{dsfont}
\usepackage{hyperref}
\usepackage{bbm}
\usepackage{comment}
\hypersetup{
    colorlinks=true,
    pdftitle={Continuity of the normal cycle with respect to $C^0$ topologies.},
    pdfpagemode=FullScreen,
    }
\usepackage{amsmath,amsthm}
\usepackage{tikz}
\usepackage{amssymb}
\usepackage{changepage} 
\usetikzlibrary{cd}

\DeclareMathOperator{\vect}{\mathrm{Vect}}

\DeclareMathOperator{\clarke}{\partial^{*} \! \!}
\DeclareMathOperator{\Conv}{\mathrm{Conv}}

\DeclareMathOperator{\Nor}{\mathrm{Nor}}

\DeclareMathOperator{\dacy}{\delta_{\mathrm{Acy}}}
\DeclareMathOperator{\dhom}{d_{\mathrm{Hom}}}
\DeclareMathOperator{\dgm}{\mathrm{dgm}}

\DeclareMathOperator{\id}{\mathrm{id}}

\DeclareMathOperator{\one}{\mathbbm{1}}
\DeclareMathOperator{\diam}{\mathrm{diam}}
\DeclareMathOperator{\lip}{\mathrm{Lip}}
\DeclareMathOperator{\Supp}{\mathrm{supp}}
\DeclareMathOperator{\Vol}{\mathrm{Vol}}

\newcommand{\Sphere}{\mathbb{S}}
\DeclareMathOperator{\diff}{\mathrm{d} \!}
\DeclareMathOperator{\diffH}{\mathrm{d} \mathcal{H}}

\DeclareMathOperator{\tq}{ \, | \,  }

\newcommand{\R}{\mathbb{R}}
\newcommand{\N}{\mathbb{N}}
\newcommand{\module}[1]{\left\lvert #1 \right\rvert}
\newcommand{\slice}[1]{\langle [\diff h] \mres (U \times \R^d), \gamma, t \rangle}
\newcommand{\tranche}[3]{\langle #1, #2, #3 \rangle}
\newcommand{\norme}[1]{\left\lvert \left \lvert #1 \right \rvert \right \rvert}
\newcommand{\eps}{\varepsilon}

\newcommand{\scal}[2]{ \left \langle #1 , #2 \right \rangle}
\newcommand{\mres}{\mathbin{\vrule height 1.6ex depth 0pt width 0.13ex\vrule height 0.13ex depth 0pt width 1.3ex}}

\usepackage{cleveref}
\newtheorem{theorem}{Theorem}
\newtheorem*{theorem*}{Theorem}
\newtheorem{corollary}[theorem]{Corollary}
\newtheorem{lemma}[theorem]{Lemma} 
\newtheorem{definition}[theorem]{Definition}
\newtheorem{proposition}[theorem]{Proposition}
\newtheorem{remark}[theorem]{Remark}

\newtheorem*{unicite}{Uniqueness theorem for normal cycles}

\newtheorem*{MR}{Main Result}

\author[1]{David Cohen-Steiner}
\author[1]{Antoine Commaret}
\affil[1]{\textit{Centre INRIA d'Université Côte d'Azur, Valbonne, France}}

\title{Continuity of the normal cycle with respect to $C^0$-topologies}
\begin{document}

\maketitle

\begin{abstract}
We show that the generalized curvatures and more specifically the normal cycle of a compact subset of $\R^d$ are continuous under weak notions of convergence, assuming an a priori mass bound.
In particular, provided its mass remains bounded, the normal cycle behaves continuously when the subset is perturbed by a homeomorphism that is $C^0$-close to the identity. 
Our approach relies on a combination of persistent homology with the geometric measure theory framework classically used in the study of curvatures of singular sets.
As an application, we prove that every compact definable set in an o-minimal structure admits a normal cycle by showing that any such set is a limit, in the above sense, of a family of smooth sets whose normal cycles have uniformly bounded mass.
We also show that WDC sets are limits of nested smooth sets with uniformly bounded normal cycle masses. 
\end{abstract}

\section{Introduction}
Curvatures of possibly singular sets can be studied under various perspectives. In 1959, Federer \cite{CurvatureFederer} 
introduced a common framework for dealing
with curvatures of submanifolds of $\R^d$ and those of convex polyhedra by defining so-called \textit{curvatures measures} for sets with \textit{positive reach}. Curvature measures were subsequently extended to various classes of singular sets, such as polyconvex sets \cite{Polyconvex}, locally finite generic unions of sets with positive reach \cite{ZahleU_PR}, and sets definable in an o-minimal structure \cite{TameSets}. 
In parallel to these developments, a line of research emerged \cite{Wintgen, ZahleCurrent, ZahleUReach} that uses the framework of currents to construct an object containing richer information about the curvatures of singular sets. The \textit{normal cycle} is essentially the integral current associated with the unit normal bundle, and allows one to recover curvature measures and their tensorial variants \cite{BernigTensor, DavidTensor} by integrating certain universal differential forms against it.
Later, Fu \cite{FuSub} established a general, axiomatic characterization for normal cycles summarized as follows.

\begin{unicite}
        For any $X \subset \R^d$, there is at most one integral current $T$ which is a Legendrian cycle, and such that the Euler characteristic of $X$ intersected with almost every half-space of $\R^d$ can be recovered from $T$ using specific formulas - see \Cref{th:uniqueness} for the precise statement.
    If such a current exists, we call it the normal cycle of $X$ and denote it by $N_X$.
\end{unicite}

The uniqueness theorem may be seen as definition of the broadest possible class of subsets of $\R^d$ for which a normal cycle can be defined. The problem of finding alternate, more explicit characterizations of this class is not an easy one and remains a driving theme in the field.
Existence of normal cycles had been proved for submanifolds \cite{Wintgen}, sets of positive reach \cite{ZahleCurrent} and generic unions of those \cite{ZahleUReach}. Using the uniqueness theorem, it was established for several additional classes.
The original article \cite{FuSub} shows that any subanalytic set admits a normal cycle. Later \cite{RatajLipschitz} proved that Lipschitz submanifolds with locally bounded curvatures also admit a normal cycle, and more recently \cite{normalWDC} showed that WDC sets, i.e. sublevel sets of the difference of two convex functions at a weakly regular value, admit one as well. The latter result was extended to locally finite generic unions of WDC sets \cite{U_WDC}.

A method Fu suggests in \cite{FuPolyhedra} is the following. From the compactness theorem for currents of Federer-Fleming \cite[Section 4]{GMT}, the existence of the normal cycle of a set $X$ can be obtained by considering sequences of sets $X_n$ that are known to admit a normal cycle (e.g, sets of positive reach) and such that the $N_{X_n}$ have uniformly bounded mass. Indeed, extracting a subsequence from $X_n$ one can then assume that $N_{X_n}$ converges to some current $T$, which is necessarily a Legendrian cycle, so that it is only left to prove that $T$ recovers the Euler characteristic of $X$ intersected with almost any half-spaces of $\R^d$. Assuming we are able to do so, $X$ admits $T$ as its normal cycle. Moreover if this can be proven for any converging subsequence we have effectively proved that $N_{X_n}$ actually converges to $N_X$. In this paper, we exhibit weak topologies on compact subsets of $\R^d$ for which the Euler characteristic recovery property passes to the limit, thereby implying the continuity of the normal cycle. 

In what sense must a sequence of sets converge, in order to guarantee that
their curvature measures or normal cycles converge to that of the limit? The first to explicitly ask this question - for curvature measures - seems to be Milnor in his 1962 paper \cite{MilnorVolumes}. 
In \cite{FuPolyhedra}, Fu used the method outlined above to show that under some additional conditions on the so-called fatness of their simplices, convergence in the Hausdorff distance of a sequence of polyhedra to a submanifold with boundary of $\R^d$ implies convergence of the normal cycles to that of the submanifold.
Quantitative bounds on the speed of convergence of the normal cycles of a sequence of sets converging to a smooth submanifold are obtained in \cite{DavidTensor}. These bounds depend on the mass of the normal cycles of the approximating sets as well as the accuracy of their tangent spaces. 
A different but closely related line of research focuses on designing ways to approximate the curvature measures of a possibly nonsmooth set, from approximating sets whose curvature measures may not converge themselves. In \cite{BoundaryMeasures}, the authors approximate the curvature measures of a set with positive reach from a set lying at Hausdorff distance $\eps$, at a rate $O(\sqrt{\eps})$ in the Wasserstein distance.
The total mean curvature of a smooth domain is approximated in $O(\eps)$ in \cite{HerbertV1}.
More recently, it was shown in \cite{PersistentIntrinsicVolumes} that the total curvature measures (the \textit{intrinsic volumes}) of two sets that are $\eps$-close in the \textit{homotopy distance}, are close at a rate linear in $\eps$. The arguments used in this article are of integral geometric nature and do not seem well-suited for extending the result beyond intrinsic volumes, which are mere real numbers, to the whole normal cycle. We now give precise definitions to state our results.

\begin{definition}[Homotopy distance for subsets of $\R^d$]
The homotopy distance $\dhom(X,Y)$ between two closed subsets $X$ and $Y$ of $\R^d$ is the infimum of positive $\eps$ such that there exist $f : X \to Y$ and $g : Y \to X$ satisfying the following:
\begin{itemize}
    \item For all $x \in X$ (resp. $y \in Y$), $\norme{ f(x) - x } \leq \eps$ (resp. $\norme{ g(y) - y } \leq \eps)$, 
    \item There exist homotopies $H_X, H_Y$ respectively between $\id_X$ and $g \circ f$, and between $\id_Y$ and $f \circ g$, with trajectories moving points by less than $2\eps$. More precisely, there exist continuous maps ${H_X : [0,1] \times X \to X}$, ${H_Y : [0,1] \times Y \to Y}$, such that for all $x \in X$, $y \in Y$ and $t \in [0,1]$, we have:
    \[
    \left \{
    \begin{array}{c c l c c c l}
        H_X(0,x) &= &x & \quad & H_Y(0,y) & = & y \\
        H_X(1,x) &= &g(f(x)) & \quad & H_Y(1,y) & = & f(g(y)) \\
        \norme{H_X(t,x) - x} &\leq& 2 \eps & \quad & \norme{H_Y(t,y) - y} &\leq &2 \eps.\\ 
    \end{array}
    \right.\]
\end{itemize}
If no such $\eps$ exists (in particular if $X$ and $Y$ are not homotopy equivalent), we let $\dhom(X,Y) = + \infty$.
\end{definition}

We will also consider an analogous yet distinct notion of convergence for compact subsets of $\R^d$, which we call the \textit{acyclic convergence}.

\begin{comment}To that end we introduce some notations about relations between compact subsets $X, Y$ of $\R^d$, illustrated in the following diagram.
% https://q.uiver.app/#q=WzAsNCxbMSwxLCJBICJdLFswLDIsIlgnIl0sWzIsMiwiWSciXSxbMSwwLCJYIFxcdGltZXMgWSJdLFswLDMsIiIsMCx7InN0eWxlIjp7InRhaWwiOnsibmFtZSI6Imhvb2siLCJzaWRlIjoidG9wIn19fV0sWzAsMV0sWzAsMl0sWzMsMSwiXFxwaV9YIiwxXSxbMywyLCJcXHBpX1kiLDFdLFsxLDIsInBeQV9ZIiwyLHsib2Zmc2V0IjozLCJjdXJ2ZSI6LTQsInN0eWxlIjp7ImJvZHkiOnsibmFtZSI6ImRvdHRlZCJ9fX1dLFsyLDEsInBfWF5BIiwyLHsib2Zmc2V0IjotNCwiY3VydmUiOjQsInN0eWxlIjp7ImJvZHkiOnsibmFtZSI6ImRvdHRlZCJ9fX1dXQ==
\[
\begin{tikzcd}[row sep=2.2em, column sep=3em]
	& {A} & \\
	{X \supset X'} && {Y' \subset Y}
	\arrow["{\pi_X}|_{A}"{description},from=1-2, to=2-1]
	\arrow["{\pi_Y}|_{A}"{description},from=1-2, to=2-3]
	% Smooth looping dotted arrows
	\arrow["{p^A_Y}"', dotted, from=2-1, to=2-3, out=20, in=160]
	\arrow["{p_X^A}"', dotted, from=2-3, to=2-1, out=190, in=-10]
\end{tikzcd}
\]

Let $\pi_X$ (resp. $\pi_Y$) be the projection onto the first (resp. second) coordinate of $X \times Y$. For any relation $A  \subset X \times Y$, and any $X' \subset X, Y' \subset Y$, write $p^A_Y(X') = \pi_Y(\pi_X^{-1}(X') \cap A) $ and $p^A_X(Y') = \pi_X(\pi_Y^{-1}(Y') \cap A)$ for the subset of $Y$ (resp. $X$) consisting of point in relation with at least one element of $X'$ (resp. $Y'$).
When relation $A$ is clear from context, we may omit the overscript $A$ to ease notations.
\end{comment}

\begin{definition}[Acyclic convergence]
For a positive number $\eps$, a relation between $X, Y \subset \R^d$ is said to be an $\eps$-\textit{acyclic relation} if:
    \begin{itemize}
        \item The graph of the relation is closed;
        \item If $x$ and $y$ are in relation, then $\norme{x -y} \leq \eps$;
        \item For all $x \in X$ (resp. $y \in Y$), the set of points of $Y$ in relation with $x$ (resp. points of $X$ in relation with $y$) has the homology group of a point \footnote{Throughout the paper, we work with Čech homology with coefficients in an arbitrary field.}.
    \end{itemize}

We say that a sequence $X_n$ of compact subsets of $\R^d$ converges acyclically to $X \subset \R^d$ if there is a sequence of $\eps_n$-acyclic relations between $X_n$ and $X$ with $\eps_n$ converging to $0$.

\end{definition}

\begin{remark}[Special case of acyclic convergence]
    In particular, a sequence of compact subsets $X_n$ of $\R^d$ converges acyclically to $X$ when there exist continuous, surjective maps $f_n : X_n \to X$ such that
    \begin{itemize}
        \item For every $x$ in $X$, $f_n^{-1}(\{x \})$ has the homology group of a point - in this case, $f_n$ is said to \emph{acyclic};
        \item The sequence $\eps_n \coloneqq \sup \left \{ \norme{f_n(z) - z}, z \in X_n \right \}$ converges to 0.
    \end{itemize}
\end{remark}

Note that in a sequence converging for the homotopy distance, the subsets must eventually share the same homotopy type as the limit, whereas this is not necessarily the case for the acyclic convergence.\\

Our main result is that normal cycles are continuous with respect to both the homotopy distance and the acyclic convergence, assuming an a priori bound on the mass of normal cycles, or equivalently, on the $(d-1)$-volume of the unit normal bundles.

\begin{MR}
\label{MR}
   Let $X_n$ be a sequence of compact subsets of $\R^d$ with smooth boundaries either converging to $X$ acyclically or with respect to the homotopy distance.
   If the normal cycles of $X_n$ have uniformly bounded mass, then $X$ admits a normal cycle $N_X$, and the sequence of normal cycles $N_{X_n}$ converges to $N_X$.
\end{MR}

The above result implies in particular the convergence of curvature measures, which may come as a surprise since the homotopy distance or the acyclic convergence do not even provide any control on the convergence of tangent spaces.
The theorem also provides a new class of sets admitting a normal cycle -- namely sets $X$ that are limits of subsets as specified in the statement. The extent of this class remains to be explored, nonetheless it is easy to see that it contains sets with positive reach, and the arguments used in \cite{RatajLipschitz} also show that it contains the class of Lipschitz submanifolds with locally bounded curvatures. We further show that compact sets definable in an o-minimal structure also belong to this class, thereby providing the first complete proof of the existence of normal cycles for such sets. The proof of \cite{Bernig}, which follows a different approach, contains a gap, as observed by \cite{Nicolaescu}. We finally show that WDC sets belong to this class as well. Our proof allows us to express them as a decreasing intersection of smooth domains with normal cycles of uniformly bounded mass, thereby answering a question raised at the end of \cite{KinematicWDC}.

\section{Currents and normal cycles}

 The space of $k$-currents is the space dual to smooth $k$-differential forms with compact supports on $\R^d$, the same way distributions are defined with respect to test functions. In the following paragraphs, we provide some notations for currents and state the theorems and properties that are relevant to this article. For more details about currents, we refer the reader to the classical text \cite[Section 4]{GMT}, or \cite{GMTintro} for an easier introduction.\\

\begin{definition}[Operations on currents]
\label{def:operation_currents}
Let $T$ be a $k$-current in a Euclidean space $\R^m$.
\begin{itemize}
    \item Its boundary is the $(k-1)$-current $\partial T : \omega \mapsto T(\diff \omega)$.
    \item Let $\phi$ be a $p \leq k$ differential form. Then the restriction of $T$ to $\phi$ is the current $(k-p)$-current defined by
    \begin{equation*}
        T \mres \phi : \omega \mapsto T(\phi \wedge \omega).
    \end{equation*} 
    \item For any smooth function $h : \R^m \to \R^{m'}$,
    the pushforward of $T$ is a $k$-current $h_* T$ in $\R^{m'}$ defined by
        \begin{equation*}
            h_* T(\psi) \coloneqq T(h^* \psi),
        \end{equation*}
    where $h^* \psi$ is the pullback of $\psi$ by the map $h$.
\end{itemize}
\end{definition}

Throughout the paper, we will use two different norms on the space of currents.
\begin{definition}[Norms on currents {\cite[Chapter 4]{GMT}}]
The mass of a current $T$ is defined as 
\begin{equation*}
    M(T) \coloneqq \sup \{ T(\phi), \norme{\phi}_{\infty} \leq 1 \}. 
\end{equation*}
The flat norm $F$ of $T$ is defined as follows:
\begin{equation*}
    F(T) \coloneqq \inf \{ M(A) + M(B), T = A + \partial B \}.
\end{equation*}
\end{definition}

Convergence in the flat norm is more relaxed than convergence in mass. In fact, it is equivalent to the weak convergence, implying the lower semicontinuity of the mass with respect to the flat norm.

 \begin{proposition}[Lower semicontinuity of mass with respect to the flat convergence]
     Let $T_j$ be a sequence of currents with supports included in some compact $K$ and converging to a current $T$ with respect to the flat norm. Then 
     \begin{equation*}
         M(T) \leq \liminf_{i \to + \infty} M(T_i).
     \end{equation*}
 \end{proposition}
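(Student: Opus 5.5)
The plan is to show that for each fixed test form $\phi$ with $\norme{\phi}_\infty \leq 1$, we have $T(\phi) = \lim_j T_j(\phi)$ and $T_j(\phi) \leq M(T_j)$. Taking $\liminf$ and then the supremum over $\phi$ gives the inequality. So everything reduces to proving that flat convergence with uniformly compact supports implies weak convergence $T_j(\phi) \to T(\phi)$ for every smooth compactly supported $k$-form $\phi$.

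For that step, fix $\phi$ and write $T - T_j = A_j + \partial B_j$ with $M(A_j) + M(B_j) \leq F(T - T_j) + 1/j$. Then
\[
\module{T(\phi) - T_j(\phi)} \leq \module{A_j(\phi)} + \module{B_j(\diff \phi)} \leq M(A_j)\norme{\phi}_\infty + M(B_j)\norme{\diff \phi}_\infty ,
\]
and the right-hand side tends to $0$ since $F(T-T_j)\to 0$.

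The role of the common compact support $K$ is mainly to make sure the flat norm is the one relevant here. Since the definition of $F$ above places no support restriction on $A_j$ and $B_j$, the estimate already works globally. If one prefers the localized flat norm $F_K$, one first replaces $\phi$ by $\chi\phi$ with $\chi$ a cutoff equal to $1$ near $K$, which changes neither $T_j(\phi)$ nor $T(\phi)$.

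Finally, $T(\phi) = \lim_j T_j(\phi) \leq \liminf_j M(T_j)$ for every admissible $\phi$. Taking the supremum over $\phi$ yields $M(T) \leq \liminf_j M(T_j)$. If the $\liminf$ is infinite there is nothing to prove.

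The only point needing mild care is the convention on the norm $\norme{\phi}_\infty$, namely comass versus Euclidean norm. It must match the one used in the definition of mass, but the argument is insensitive to this choice. I expect no real obstacle; the key step is the bound on $\module{T(\phi)-T_j(\phi)}$ via the decomposition $A_j + \partial B_j$.
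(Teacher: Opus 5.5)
Your argument is correct and follows the route the paper indicates. The paper gives no separate proof, only the remark that flat convergence implies weak convergence and hence lower semicontinuity of mass, and your estimate $\module{T(\phi)-T_j(\phi)} \leq M(A_j)\norme{\phi}_\infty + M(B_j)\norme{\diff \phi}_\infty$ supplies exactly that implication. You are also right that only this easy direction (flat $\Rightarrow$ weak) is needed, so the common compact support $K$ plays no essential role for the paper's unrestricted flat norm $F$.
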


The currents appearing in this paper (i.e., normal cycles, Monge-Ampère currents, or any current stemming from such currents and the operations defined in \Cref{def:operation_currents}, as well as the later defined slices) are all \textit{integral currents}. 
The exact definition of integral currents is not needed for the present paper, and we refer the reader to \cite[Section 4]{GMT} for a precise description. For any integral current $T$ and any set $U \subset \R^m$, the map $\phi \mapsto T( \one_U \phi)$ is consistently defined - even though $\one_U \phi$ might not be smooth.
In spite of the slight abuse in notation, this map is traditionally denoted $T \mres U$, and its mass is $M(T \mres U) = \sup \{ T(\phi),  \; \Supp(\phi) \subset U, \norme{\phi}_{\infty} \leq 1  \}$. \\

Another important property of integral currents is the existence of a \textit{slicing} operation. Prosaically, the slice $\tranche{T}{\pi}{v}$ of a current $T$ by a Lipschitz map $\pi$ can be thought as the current $T$ restricted to the fiber $\pi^{-1}(v)$. 

\begin{definition}[Slicing of a current]
 Let $\pi : \R^m \to \R^{m'}$ be a locally Lipschitz function and let $T$ be a $k$-integral current on $\R^m$ with $k \geq m'$. Then the slices of $T$ by $\pi$ are $(k - m')$ integral currents $\tranche{T}{\pi}{v}$ defined for almost any $v \in \R^d$, satisfying, for every $(k-m')$ differential form $\psi$
    \begin{equation}
    \label{eq:slice}
        \int_{v \in \R^{m'}} \phi(\nu) \tranche{T}{\pi}{v}(\psi(v)) = T \mres \pi^* (\phi \wedge \Omega)(\psi),
    \end{equation} 
    where $\Omega$ is the volume form of $\R^{m'}$.
\end{definition}

The following instance of the compactness theorem for integral currents is key to our approach.

\begin{theorem}[Particular case of the compactness theorem for currents with bounded mass {\cite[4.2.17]{GMT}}]
\label{th:currents_compactness}
Let $T_i$ be a sequence of $k$-integral currents such that $\partial T_i = 0$ for all $i \in \N$ and such that $\liminf_{i \to \infty} M(T_i) < + \infty$. Then there exists a $k$-integral current $T$ and a subsequence of $T_i$ converging to $T$ in the flat norm.
\end{theorem}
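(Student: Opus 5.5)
The plan is to reduce this statement to the general Federer--Fleming compactness theorem for integral currents \cite[4.2.17]{GMT}. That theorem says the following: a sequence of integral currents whose normal masses $N(T_i) = M(T_i) + M(\partial T_i)$ are uniformly bounded, and whose supports lie in a fixed compact set, has a subsequence converging in the flat norm to an integral current. It is equivalently stated with local flat convergence when the supports are not confined. The statement above is a specialization of it, so the work is to check the hypotheses and read off the conclusion.

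\textbf{Step 1 (bounding the normal mass).} Since $\liminf_{i\to\infty} M(T_i) = C < +\infty$, there is a subsequence along which $M(T_i) \leq C+1$. Because $\partial T_i = 0$, the normal mass along this subsequence is $N(T_i) = M(T_i) \leq C+1$. The boundaries are trivially uniformly bounded, so no separate control on them is needed.

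\textbf{Step 2 (supports).} The general theorem needs the supports to stay in a fixed compact set, or else gives only local flat convergence.
\begin{itemize}
\item In every application in this paper, the currents are normal cycles of compact sets $X_n$ that converge (acyclically or in $\dhom$) to a compact $X$. These lie in $K \times \Sphere^{d-1}$ for some fixed compact $K \subset \R^d$, so the compact-support version applies directly.
\item In full generality, I would apply the local version on an exhaustion of $\R^m$ by balls and use a diagonal extraction. This yields convergence in the local flat topology, which is the sense in which the statement should be read when supports escape to infinity.
\end{itemize}

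\textbf{Step 3 (the limit).} Apply the compactness theorem to obtain a further subsequence converging in flat norm to an integral current $T$; integrality of the limit is the content of the closure theorem.
\begin{itemize}
\item The limit has dimension $k$.
\item The boundary operator is continuous for flat (equivalently, weak) convergence, since $\partial T_i(\omega) = T_i(\diff\omega)$. Hence $\partial T = \lim_i \partial T_i = 0$.
\item By the lower semicontinuity of mass stated above, $M(T) \leq C$.
\end{itemize}

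\textbf{Main obstacle.} The genuinely hard part is integrality of the limit, namely that a flat limit of integral currents with bounded normal mass is again rectifiable with integer multiplicities. This is the deep boundary rectifiability and closure theorem of Federer--Fleming. I would not reprove it; I would invoke \cite[4.2.16--4.2.17]{GMT} as the paper does. The remaining steps are routine bookkeeping.
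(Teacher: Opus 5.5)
Your proposal is correct and takes the same route as the paper, which gives no argument beyond citing Federer's compactness and closure theorems \cite[4.2.16--4.2.17]{GMT}: since $\partial T_i = 0$, the normal mass equals the mass, which is bounded along a subsequence. Your caveat on supports is justified (a fixed cycle translated off to infinity has no subsequence converging in the global flat norm, so the statement as written needs either supports in a fixed compact set, which holds in the paper's application because $X_n$ lies in the $\eps_n$-neighborhood of $X$, or local flat convergence); one small slip is that to conclude $M(T) \leq C$ you must extract a subsequence with $M(T_i) \to C$, not merely $M(T_i) \leq C+1$.
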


The uniqueness theorem of Fu provides a definition of normal cycles in the broadest setting.
There exist several ways of formulating it. We use here the same presentation as \cite[Chapter 9]{RatajZahle}.

\begin{theorem}[Uniqueness theorem for normal cycles {\cite[Section 3]{FuSub}}]
\label{th:uniqueness}
Let $X$ be a closed subset of $\R^d$. Then there exists at most one $(d-1)$-integral current $T$ with support in $\R^d \times \Sphere^{d-1}$ satisfying the following three properties:
\begin{itemize}
    \item $T$ is a cycle, that is, $\partial T = 0$,
    \item $T$ is Legendrian, that is, $T \mres \alpha =0$, where $\alpha = \sum_{i=1}^d x_i \diff y_i$ is the canonical contact form,
    \item For almost all $(\nu,t)$ in $\Sphere^{d-1} \times \R$, we have
    \[ p_{\#} \langle T, \pi_1, - \nu \rangle (\mathbf{1}_{(-\infty, t]}) = \chi( X \cap H_{\nu,t}),\]
    where $p : (x, \nu) \mapsto \scal{x}{\nu}$, $\pi_1 : (x,n) \mapsto n$ and ${H_{\nu, t} \coloneqq \{ x \in \R^d, \scal{x}{\nu} \leq t \} }$.
\end{itemize}
    If such a current exists, we call it the normal cycle of $X$ and denote it by $N_X$.
\end{theorem}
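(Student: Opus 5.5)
The plan is to argue by linearity and reduce uniqueness to a vanishing statement. Suppose $T_1$ and $T_2$ both satisfy the three properties, and set $S \coloneqq T_1 - T_2$. Then $S$ is a $(d-1)$-integral current supported in $\R^d \times \Sphere^{d-1}$ with $\partial S = 0$ and $S \mres \alpha = 0$. The slicing formula \eqref{eq:slice} is linear in the current, so for almost every $(\nu,t)$ the two Euler characteristic identities subtract to give
\[ p_{\#}\tranche{S}{\pi_1}{-\nu}(\one_{(-\infty,t]}) = 0 . \]
It therefore suffices to show that a Legendrian integral cycle with this property is zero.

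\textbf{Step 1: the height-slices vanish.} For almost every $\nu$, the slice $\tranche{S}{\pi_1}{-\nu}$ is a $0$-dimensional integral current: a finite sum of Dirac masses at points $(x_j,-\nu)$ with integer multiplicities $m_j$. Its pushforward by $p$ is the atomic measure $\sum_j m_j \delta_{\scal{x_j}{-\nu}}$ on $\R$. The identity above says that the cumulative distribution function of this measure vanishes for almost every $t$. Since that function is right-continuous, it vanishes everywhere, so $p_{\#}\tranche{S}{\pi_1}{-\nu} = 0$ for almost every $\nu$. Through the slicing formula this amounts to $F_{\#} S \mres \pi_1^*\Omega = 0$, where
\[ F : \R^d \times \Sphere^{d-1} \to \Sphere^{d-1}\times\R, \qquad F(x,n) = (n,\scal{x}{n}). \]

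\textbf{Step 2: the Legendrian condition separates atoms.} It remains to pass from ``the heights cancel'' to ``the points cancel''. The danger is that distinct points $x_j \neq x_k$ in the same fiber could share a height $\scal{x_j}{\nu}$ and cancel after pushforward. The Legendrian condition rules this out generically. At $\mathcal H^{d-1}$-almost every point of the rectifiable carrier of $S$, the approximate tangent plane annihilates the contact form. Along that plane the differential of the height is therefore
\[ \diff \scal{x}{n} = \scal{x}{\diff n}, \]
since the $\scal{\diff x}{n}$ term vanishes. Hence, as $\nu$ varies, the height of an atom moves with first-order variation $\scal{x_j}{\diff\nu}$. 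Two atoms at distinct points $x_j \neq x_k$ over the same $\nu$ have distinct such variations, so for nearby directions their heights separate. I would make this rigorous by considering the enlarged map $G(x,n) = (n, \scal{x}{n}, x - \scal{x}{n}n)$, or equivalently the jet of the height function. The plan is to show that $F_{\#}S = 0$ together with the Legendrian condition forces $G_{\#}S = 0$, by differentiating the family of slices in $\nu$ in the sense of distributions. Since $G$ is injective, $S = G^{-1}_{\#} G_{\#} S = 0$.

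\textbf{Main obstacle.} The delicate step is Step 2 in the non-smooth setting. $S$ is only rectifiable, the slices are defined only for almost every $\nu$, and the points $x_j(\nu)$ need not depend smoothly on $\nu$. My first attempt would be a distributional argument. Pair $S$ with forms of the type $\pi_1^*(\phi)\wedge g(\scal{x}{n})\,\beta$, where $\beta$ ranges over forms in the $x$-directions. Then use $\partial S = 0$ together with $S \mres \alpha = 0$ to rewrite $S(\diff(\cdot))$ in terms of $\nu$-derivatives of the vanishing quantities $F_{\#}S$. This should show that every component of $S$ is determined by $F_{\#}S \mres \pi_1^*\Omega$, which vanishes by Step 1.
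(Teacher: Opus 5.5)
The paper gives no proof of this statement. It is Fu's theorem, imported as a black box in the formulation of Rataj--Z\"ahle and used only in the proof of \Cref{th:MR}. Your proposal therefore has to stand on its own, and it does not get past the easy reduction.

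Passing to $S=T_1-T_2$ and Step~1 are fine: slicing is linear, almost every slice is an integral $0$-current, and the cumulative function is right-continuous. But this only gives $p_{\#}\tranche{S}{\pi_1}{-\nu}=0$ for a.e.\ $\nu$. Everything after that is announced rather than proved. The map $G$ does not help: it is a diffeomorphism onto its image, since $x=\scal{x}{n}n+(x-\scal{x}{n}n)$, so ``$G_{\#}S=0$'' is literally the statement $S=0$.

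Your separation-of-atoms heuristic is correct, and it can be made rigorous without distributional differentiation. Cover the part of the carrier of $S$ where $\pi_1$ has positive approximate Jacobian by countably many Lipschitz graphs $\nu\mapsto(x_k(\nu),\nu)$. The Legendrian condition gives $\nabla h_k=x_k-h_k\nu$ for $h_k=\scal{x_k}{\nu}$. The gradient of the Lipschitz function $h_k-h_l$ vanishes a.e.\ on its zero set, so equal heights force equal points for a.e.\ $\nu$. This only shows, however, that $S$ vanishes where $\pi_1$ restricted to its carrier has rank $d-1$, i.e.\ $S\mres\pi_1^{*}\Omega=0$.

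The genuine gap is the rest of $S$: the part carried where $\pi_1$ drops rank. For a polytope, these are the pieces of the normal cycle lying over faces of positive dimension. That part is invisible to every slice $\tranche{S}{\pi_1}{\nu}$, hence to $F_{\#}S$ and to anything obtained by differentiating the family of slices in $\nu$. The Legendrian condition cannot exclude it. Take $Q$ an oriented $(d-1)$-disc in a hyperplane orthogonal to $\nu_0$ and $j(x)=(x,\nu_0)$. Then $j_{\#}[Q]$ is a nonzero Legendrian integral current supported in $\R^d\times\Sphere^{d-1}$, all its slices vanish, and $F$ maps its support to a single point, so $F_{\#}j_{\#}[Q]=0$.

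So the claim of Step~2 as you state it (``$F_{\#}S=0$ together with the Legendrian condition forces $G_{\#}S=0$'') is false. The condition $\partial S=0$ must do essential work. After disposing of the full-rank part, you still need a separate argument that a Legendrian \emph{cycle} carried by the set where $\pi_1$ has rank $<d-1$ is zero. There, the kernel of $d\pi_1$ on the tangent planes consists of pure $x$-directions orthogonal to $\nu$. Heuristically these pieces are ruled by flat $x$-directions, and closedness plus compact support is what must kill them. Your last paragraph invokes $\partial S=0$ only to assert that every component of $S$ is determined by $F_{\#}S\mres\pi_1^{*}\Omega$. That is Fu's theorem restated, not a proof of it.
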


\section{Persistence theory and Čech homology}
\label{sec:persistance}
This section gives a summary of the classical notions of persistence theory to make the paper more self-contained. Further information about this topic can be found in \cite{StructureStability}.

\begin{definition}[Persistence modules]
Let $\mathbb{K}$ be a field. A persistence module $M$ is a functor $\R \to \vect_{\mathbb{K}}$, that is, a collection of $\mathbb{K}$-vector spaces $(M_t)_{t \in \R}$ and linear maps $\phi^{t}_s : M_s \to M_t$ for any $s \leq t \in \R$, such that $\phi^u_{t} \circ \phi^{t}_s = \phi^{u}_s$ for any ordered triple $s \leq t \leq u$ in $\R$. It is said to be \textit{$q$-tame} when for every $s < t$ in $\R$ the maps $\phi^{t}_s$ have finite rank.
\end{definition}

\begin{comment}
    From the functorial definition of persistence modules,
\end{comment} 
There is a natural notion of morphisms between persistence modules, from which one can define a (pseudo)metric on persistence modules called the \textit{interleaving distance}.

\begin{definition}[Morphisms, $\delta$-interleavings and interleaving distance]
Let $M,N$ be two persistence modules and let $\delta \geq 0$.\\
\begin{itemize}
    \item $M^{\delta}$ denotes the persistence module $(M_{t + \delta})_{t \in \R}$, i.e., $M$ shifted by $\delta$.  
    \item A morphism $j$ between two persistence modules $N$ and $M$ is a natural transformation between functors, i.e., a collection of linear maps $(j_t)_{t \in \R} : N_t \to M_t$ such that the left diagram in \Cref{fig:commute} commutes.
    \item  The modules $N$ and $M$ are $\delta$-\emph{interleaved} when there exist two morphisms  $u, v$ respectively from $M$ to $N^{\delta}$ and from $N$ to $M^{\delta}$ such that the right diagram in \Cref{fig:commute} commutes, 

    \item The \textit{interleaving distance} between $M$ and $N$ is defined as:
\[d_I(M,N) \coloneqq \inf \{ \delta \in \R^+ \tq M \text{ and } N \text{ are } \delta \text{-interleaved}\}. \]

    \item If $M, N$ are graded persistence modules with same degrees, the interleaving distance between them is the maximum of the interleaving distance between their respective persistence modules of each degree.
\end{itemize}
\end{definition}

% https://q.uiver.app/#q=WzAsMTIsWzAsMSwiTl9zIl0sWzAsMCwiTV9zIl0sWzEsMSwiTl90Il0sWzEsMCwiTl90Il0sWzQsMF0sWzQsMV0sWzUsMCwiTV97cytcXGRlbHRhfSJdLFs1LDEsIk5fe3MrXFxkZWx0YX0iXSxbMywwLCJNX3MiXSxbMywxLCJOX3MiXSxbNywxLCJOX3tzKzJcXGRlbHRhfSJdLFs3LDAsIk1fe3MrMlxcZGVsdGF9Il0sWzAsMSwial9zIl0sWzIsMywial90IiwyXSxbMSwzXSxbMCwyXSxbOCw2XSxbOSw3XSxbNywxMF0sWzYsMTFdLFs5LDYsInZfcyIsMSx7ImxhYmVsX3Bvc2l0aW9uIjoyMCwiY3VydmUiOjF9XSxbOCw3LCJ1X3MiLDEseyJsYWJlbF9wb3NpdGlvbiI6MjAsImN1cnZlIjotMX1dLFs3LDExLCJ2X3tzK1xcZGVsdGF9IiwxLHsibGFiZWxfcG9zaXRpb24iOjMwLCJjdXJ2ZSI6MX1dLFs2LDEwLCJ1X3tzK1xcZGVsdGF9IiwxLHsibGFiZWxfcG9zaXRpb24iOjMwLCJjdXJ2ZSI6LTF9XV0=
\begin{figure}[h!]
    \centering
\[\begin{tikzcd}
	{M_s} & {M_t} && {M_s} & {} & {M_{s+\delta}} && {M_{s+2\delta}} \\
	{N_s} & {N_t} && {N_s} & {} & {N_{s+\delta}} && {N_{s+2\delta}}
	\arrow["{j_s}", from=2-1, to=1-1]
	\arrow["{j_t}"', from=2-2, to=1-2]
	\arrow[from=1-1, to=1-2]
	\arrow[from=2-1, to=2-2]
	\arrow[from=1-4, to=1-6]
	\arrow[from=2-4, to=2-6]
	\arrow[from=2-6, to=2-8]
	\arrow[from=1-6, to=1-8]
	\arrow["{v_s}"{description, pos=0.25}, from=2-4, to=1-6]
	\arrow["{u_s}"{description, pos=0.25}, from=1-4, to=2-6]
	\arrow["{v_{s+\delta}}"{description, pos=0.25}, from=2-6, to=1-8]
	\arrow["{u_{s+\delta}}"{description, pos=0.25}, from=1-6, to=2-8]
\end{tikzcd}\]
    \caption{Commutative diagrams in the definition of morphisms (left) and $\delta$-interleaving (right).}
    \label{fig:commute}
\end{figure}

Under suitable mild conditions, persistence modules can be decomposed as the direct sum of so-called \textit{interval modules}. From this decomposition, we can associate to a persistence module its \textit{persistence diagram}. 
\begin{comment}
    This decomposition can be described using \textit{persistence diagrams}.
\end{comment}

\begin{definition}[Persistence diagrams of interval decomposable persistence modules]
Given an interval $I \subset \R$, $\one_{I}$ is the persistence module defined by
\[ 
(\one_{I})_t \coloneqq \left \{
\begin{array}{cc}
     \mathbb{K} & \text{ when } t \in I  \\
     0 & \text{ otherwise} 
\end{array} \right. \]
and such that the linear maps between $(\one_{I})_s \to (\one_{I})_{t} $ are the identity when $s < t \in I$.
A persistence module $M$ is said to be \emph{interval decomposable} when it is isomorphic to a module of the form
\[
\bigoplus_{I \in \mathcal{I}} \one_{I},
\]
where $\mathcal{I}$ is an at most countable multiset of intervals of $\R$. \\

The \emph{persistence diagram} associated to an interval decomposable persistence module is the multiset of points in $\R^2$ whose coordinates are the bounds of the intervals decomposing $M$:
\begin{equation*}
    \dgm(M) \coloneqq \bigsqcup_{I \in \mathcal{I}} \{ (\inf I, \sup I) \}.
\end{equation*}

Elements of a persistence diagram are usually referred to as either \emph{intervals} or \emph{bars}.
We denote by $N(M)$ the number of intervals of an interval decomposable persistence module $M$.
\end{definition}

The interleaving distance between persistence modules, of algebraic nature, has its  counterpart at the diagram level, defined in a combinatorial way.

\begin{definition}[Bottleneck distance]
 A $\delta$-matching between two persistence diagrams $D, D'$ is a bijective map  $\gamma : C \to C'$, between subsets of $D, D'$ such that for any $c \in C$, $\norme{\gamma(c) - c}_{\infty} \leq \delta$, and such that for any $(a,b) \in (D \setminus C) \cup (D' \setminus C')$, $\module{a-b} \leq 2\delta$.

 The \emph{bottleneck distance} between two diagrams $D, D'$ is defined as:
\[ d_B(D, D') \coloneqq \inf \{ \delta \tq \text{There exists a } \delta \text{-matching between } D \text{ and } D' \}.\]

\end{definition}

The isometry theorem \cite[Chapter 5]{StructureStability} of persistence theory states that these two distances coincide: for any pair of interval decomposable persistence modules $M,N$, we have $d_I(M,N) = d_B(\dgm(M), \dgm(N))$.\\

We will study persistence modules arising from sublevel set filtrations.

\begin{definition}[Persistent homology notations for sublevel set filtrations]
 Let $f : \R^d \to \R$ be a continuous function and let $X$ be a closed subset of $\R^d$. For $t \in \R$ let $X^f_t = X \cap f^{-1}(-\infty, t]$ be the sublevel set filtration of $X$. Applying the $i$-th homology functor to this filtration yields a persistence module which we denote by $M_i(f, X)$.
 
 The graded persistence module with $M_i(f,X)$ at degree $i$ is the \emph{persistent homology module} associated to $f|_{X}$, and we denote it by $M(f,X)$.
 
 When the modules $M_i(f,X)$ are interval decomposable, we denote their associated persistence diagrams by $\dgm_i(X,f)$. The \emph{persistent homology diagram} $\dgm(X,f)$ is the graded persistence diagram with $\dgm_i(X,f)$ as its component of degree $i$.  
\end{definition}

In the remainder of the paper, when dealing with persistent homology of sublevel set filtration, we will always work with Čech homology. 
Defined as the inverse limit of the homology of nerves of coverings - see \cite{Spanier}, it coincides with singular homology on sufficiently regular sets, but enjoys the following continuity property.

\begin{proposition}[Right continuity of Čech homology for sublevel set filtrations of continuous functions]
\label{prop:cech_continuity}
Let $f : \R^d \to \R$ be a continuous function and $X \subset \R^d$. 
Considering Čech homology, for any $s$ in $\R$ we have the following inverse limit:
\begin{equation*}
\begin{comment}
\label{eq:cech_continuity}
\end{comment}
\lim_{t \to s^+} H_i(X^f_t) = H_i(X^f_s).
\end{equation*}
In particular, for any $s \in \R$ and nonzero $u \in H_i(X_s)$, there exists $t > s$ such that the image of $u$ by the linear map induced by the inclusion $H_i(X^f_s) \to H_i(X^{f}_t)$ is nonzero. 
\end{proposition}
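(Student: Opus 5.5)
We will prove the statement for $X$ compact, which is the setting of the paper. For merely closed $X$ one first intersects with a large closed ball, and I do not treat that reduction here. The plan is to reduce to the classical continuity axiom of Čech homology.

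\textbf{Step 1: express $X^f_s$ as a nested intersection.} Because $f$ is continuous, each $X^f_t = X \cap f^{-1}(-\infty,t]$ is a closed subset of the compact set $X$, hence compact. The family is nested: $X^f_t \subset X^f_{t'}$ whenever $t \leq t'$. Moreover
\[
\bigcap_{t>s} X^f_t = X \cap f^{-1}\Bigl(\bigcap_{t>s}(-\infty,t]\Bigr) = X^f_s .
\]
The sequence $t_n = s + 1/n$ is cofinal in the directed set $\{t > s\}$, ordered by decreasing $t$. The inverse system $(H_i(X^f_t))_{t>s}$, with maps induced by inclusions, therefore has the same inverse limit as the sequence $(H_i(X^f_{t_n}))_n$. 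So it suffices to treat the decreasing sequence of compacta $K_n = X^f_{t_n}$, whose intersection is $K = X^f_s$.

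\textbf{Step 2: apply the continuity property of Čech homology.} We invoke the continuity theorem for Čech homology, as in Eilenberg–Steenrod or \cite{Spanier}. For a decreasing sequence of compact Hausdorff spaces $K_n$ with intersection $K$, the inclusions induce an isomorphism
\[
\check H_i(K) \xrightarrow{\ \sim\ } \varprojlim_n \check H_i(K_n),
\]
provided the coefficients lie in a field, or more generally in a compact group. This is where the choice of field coefficients made in the footnote is essential: with coefficients in $\mathbb{Z}$ the statement fails, because inverse limits are not exact. If a direct argument were needed, I would proceed in three stages.
\begin{itemize}
\item Any finite open cover of $K$ extends to a finite open cover of $K_n$ for $n$ large, by compactness and the nested-intersection property.
\item Hence the system of nerves of covers of $K$ is cofinal in the combined system of nerves of covers of the $K_n$.
\item Inverse limits of finite-dimensional vector spaces commute with each other, since the Mittag-Leffler condition holds automatically, so the two inverse limits can be exchanged.
\end{itemize}
This technical point, the compatibility of the double inverse limit, is the main obstacle. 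I would cite it rather than reprove it.

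\textbf{Step 3: deduce the ``in particular'' statement.} Let $u \in H_i(X^f_s)$ be nonzero. Its image in $\varprojlim_{t>s} H_i(X^f_t)$ is the compatible family $(\iota_{s,t}(u))_{t>s}$, where $\iota_{s,t}$ is the map induced by the inclusion $X^f_s \hookrightarrow X^f_t$. By Step 2 this map into the inverse limit is an isomorphism, in particular injective, so the family is nonzero. An element of an inverse limit is zero exactly when all its components vanish. Hence there is some $t > s$ with $\iota_{s,t}(u) \neq 0$, which is the claim.
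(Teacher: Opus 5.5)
Your argument is correct and is essentially the paper's own: the paper gives no separate proof and simply appeals to the continuity of Čech homology under decreasing intersections of compacta (citing Spanier), which is what your Steps 1--2 do, and your Step 3 is the intended deduction of the ``in particular'' clause. Two side remarks are inaccurate, though nothing rests on them: continuity of Čech homology on compacta holds for every coefficient group (over $\mathbb{Z}$ it is exactness that fails, and interchanging two inverse limits needs no Mittag-Leffler condition), and compactness cannot be recovered for closed $X$ by intersecting with a ball, since for $X=\R^2$ and $f(x,y)=\min\bigl(|y|,\,|y-1/(1+x^2)|\bigr)$ the set $X^f_0$ has two components while every $X^f_t$ with $t>0$ is connected, so the proposition genuinely requires the compactness you assumed.
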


Any persistence module satisfying the last conclusion of \Cref{prop:cech_continuity}, and that can furthermore be approximated up to arbitrary precision by modules with uniformly bounded numbers of intervals, is finitely decomposable into intervals.

\begin{proposition}[Semi-continuity of the number of bars]
\label{prop:semi_bars}
    Let $M$ be a persistence module such that for any $s \in \R$, $\lim_{t \to s^+} M_t = M_s$.
    Then $M$ is interval decomposable.
    Furthermore, assume that there is a sequence $M(n)$ of interval decomposable persistence modules such that $\lim_{n \to + \infty} d_I(M, M(n)) = 0$. Then
    \begin{equation*}
        N(M) \leq \liminf_{n \to + \infty} N(M(n)).
    \end{equation*}
\end{proposition}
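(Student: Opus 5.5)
The plan is to reduce to pointwise finite-dimensional modules, use the structure theorem for those, and then count bars through the isometry theorem. The inequality is trivial when $\liminf_n N(M(n)) = +\infty$, so the substantive case is $N \coloneqq \liminf_n N(M(n)) < +\infty$; after passing to a subsequence we may assume $N(M(n)) \le N$ for all $n$.

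\textbf{Step 1: bounded ranks.} Fix $s<t$ and take $n$ with $\delta_n \coloneqq d_I(M,M(n))$ (or a slightly larger interleaving parameter) satisfying $2\delta_n < t-s$. The interleaving factors $M_s \to M_t$ as
\[
M_s \to M(n)_{s+\delta_n} \to M(n)_{t-\delta_n} \to M_t .
\]
The middle map is a map of an interval decomposable module with at most $N$ bars. Each bar contributes rank at most one, so $\operatorname{rank}(M_s\to M_t) \le N$.

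\textbf{Step 2: finite-dimensional fibres.} Here I use right continuity. The hypothesis $M_s = \lim_{t\to s^+} M_t$ says that the canonical map $M_s \to \varprojlim_{t>s} M_t$ is an isomorphism. This map factors through the inverse system of images $I_t \coloneqq \operatorname{im}(M_s\to M_t)$, so $M_s$ injects into $\varprojlim_t I_t$. Each $I_t$ has dimension at most $N$, and the transition maps $I_t \to I_{t'}$ are surjective. Hence the dimensions are monotone in $t$ and bounded, so they stabilise near $s^+$. The inverse limit therefore has dimension at most $N$, and so does $M_s$.

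\textbf{Step 3: decomposability and the shape of the bars.} Since $M$ is pointwise finite-dimensional, Crawley-Boevey's theorem gives an interval decomposition. Right continuity then rules out bars that contain their right endpoint. If a bar contained its right endpoint $b$, its generator in $M_b$ would be a nonzero element dying in every $M_t$ with $t>b$, contradicting the second conclusion of \Cref{prop:cech_continuity}'s analogue. In particular every bar has positive length.

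\textbf{Step 4: counting bars.} Pick any finite collection of $k \le N(M)$ bars of $M$, and let $\ell>0$ be the minimum of their lengths. Choose $n$ with $d_I(M,M(n)) < \ell/2$. By the isometry theorem there is a $\delta$-matching with $\delta<\ell/2$. Each chosen bar $(a,b)$ has $|a-b| \ge \ell > 2\delta$, so it cannot be left unmatched. The matching is a bijection, so the $k$ chosen bars go to $k$ distinct bars of $M(n)$, giving $k \le N(M(n))$. Since $k$ is arbitrary, $N(M) \le \liminf_n N(M(n))$.

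\textbf{Main obstacle.} The delicate point is the first claim of the statement in full generality, without any bound on the ranks. Right continuity alone does not obviously yield pointwise finite dimensionality, and Crawley-Boevey's theorem needs it. In that case I would try to adapt Crawley-Boevey's functorial-filtration argument, using the continuity of inverse limits in place of finite-dimensionality. Failing that, I would note that in the paper's applications only the finite-$N$ case is needed, where Steps 1--2 settle the issue.
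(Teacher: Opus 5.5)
Your proof is correct, and your Step 4 is the paper's counting argument, but you reach interval decomposability by a different route.

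\textbf{How the routes differ.} The paper gets from the interleavings only $q$-tameness, i.e.\ each rank of $M_s\to M_t$ is finite. It then invokes \cite{observable}: a $q$-tame module with the right-continuity property equals its radical, hence is interval decomposable without singleton bars. You first pass to a subsequence with $N(M(n))\le N$, so the ranks are \emph{uniformly} bounded. You then combine injectivity of $M_s\to\varprojlim_{t>s}M_t$ with the stabilisation of the images $I_t$ to get $\dim M_s\le N$. Crawley-Boevey's theorem and right continuity finish the job. Your route is self-contained and uses only standard structure theory. The paper's route is meant to buy the first claim without any bound on the number of bars.

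\textbf{Your ``main obstacle''.} That extra generality does not exist, so you should drop this obstacle rather than try to overcome it. Let $V=\prod_{n\ge 1}\one_{[0,1/n)}$.
\begin{itemize}
\item $V$ is right continuous; at $0$ we have $V_0=\mathbb{K}^{\N}=\varprojlim_m\mathbb{K}^m$.
\item $V$ is $q$-tame.
\item $d_I\bigl(V,\bigoplus_{n\le m}\one_{[0,1/n)}\bigr)\le 1/(2(m+1))$.
\end{itemize}
So $V$ meets every hypothesis of the statement. Now suppose $V\cong\bigoplus_{I}\one_I$. Every non-singleton bar contains a rational point of $(0,1)$, where $V$ is finite-dimensional, so there are only countably many such bars. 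Since $\dim V_0$ is uncountable, some bar must equal $\{0\}$. That contradicts right continuity, exactly as in your Step 3.

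Hence $q$-tameness plus right continuity does not imply interval decomposability, and the paper's appeal to \cite{observable} cannot hold in that generality. The decomposability claim genuinely needs $\liminf_n N(M(n))<\infty$. That is the case you prove, and the only one the paper uses: in the lemma bounding $N(\nu,X)$, the $\liminf$ is finite for almost every $\nu$.
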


\begin{proof}
    First, remark that $M$ is $q$-tame. Indeed, let $\eps_n = d_I(M, M(n))$ and consider $s < t$ in $\R$. For $n$ large enough,  $s + 2\eps_n < t$ so that by definition of the interleaving distance the map $M_s \to M_t$ factors through $(M(n))_{s + \eps_n}$, so its rank is at most $N(M(n))$, which is finite for large enough $n$. Following \cite[Section 3.2]{observable}, a $q$-tame persistence module satisfying the last conclusion of \Cref{prop:cech_continuity} is equal to its radical, implying that it is interval decomposable and that its intervals cannot be singletons.
    Extracting a subsequence, assume that $N(M(n)) = C \coloneqq \liminf_{k \to + \infty} N(M(k))$ for all $n \in \N$. By the isometry theorem, there exists an $\eps_n$-matching between $\dgm(M(n))$ and $\dgm(M)$, implying that the number of intervals of $M$ with length greater than $2\eps_n$ is at most $C$. Letting $\eps_n$ go to zero yields the desired result since the intervals of $M$ all have positive length.
\end{proof}

It is not hard to see that the two above propositions imply the following.

\begin{corollary}[Shape of intervals arising in Čech homology]
\label{cor:shape_intervals}
Any persistence module arising from Čech homology is interval decomposable and has its intervals left-closed and right-open. 
\end{corollary}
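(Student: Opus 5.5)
The plan is to apply \Cref{prop:semi_bars} to $M = M_i(f,X)$, with its hypothesis supplied by \Cref{prop:cech_continuity}, and then to rule out each possible bad endpoint of a bar using the inverse limit property.

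\textbf{Step 1: interval decomposability.} By \Cref{prop:cech_continuity}, the module $M = M_i(f,X)$ satisfies $\lim_{t \to s^+} M_t = M_s$ for every $s$, where the limit is an inverse limit. \Cref{prop:semi_bars} then says that $M$ is interval decomposable. Its proof needs $q$-tameness, and there this came from approximating modules. I would state that I use $q$-tameness whenever it is available in the setting at hand, for instance when $M$ is an interleaving-limit of modules with finitely many bars, as in the later applications. So I write $M \cong \bigoplus_{I \in \mathcal{I}} \one_I$. All structure maps respect this decomposition, so the component of $\phi^t_s(u)$ in a summand $\one_I$ depends only on the $I$-component of $u$.

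\textbf{Step 2: intervals are right-open.} Suppose some $I$ contains its right endpoint $b = \sup I$. Let $u \in M_b$ be the generator of the summand $\one_I$ at time $b$. For every $t > b$ we have $t \notin I$, so $(\one_I)_t = 0$ and hence $\phi^t_b(u) = 0$. But $u \neq 0$, and the second part of \Cref{prop:cech_continuity} gives some $t > b$ with $\phi^t_b(u) \neq 0$. This is a contradiction, so no bar contains its right endpoint. In particular no bar is a singleton.

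\textbf{Step 3: intervals are left-closed.} This is the step needing the most care, because inverse limits do not commute with direct sums; I avoid the issue by working with a single summand. Suppose some $I$ has left endpoint $a = \inf I \notin I$. By Step 2, $I$ is nonempty and not a singleton, so $I \supset (a, a+\delta)$ for some $\delta > 0$. For $t \in (a, a+\delta)$, let $x_t \in M_t$ be the generator of $\one_I$ at $t$. The family $(x_t)$ is compatible under the structure maps, so it defines an element of $\lim_{t \to a^+} M_t$. By \Cref{prop:cech_continuity}, $M_a \to \lim_{t\to a^+} M_t$ is an isomorphism, so there is $u \in M_a$ with $\phi^t_a(u) = x_t$ for all $t \in (a, a+\delta)$.

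Now write $u$ as a finite sum of components in summands $\one_J$ with $a \in J$. None of these is $\one_I$, since $a \notin I$. Each component maps into its own summand, so the $\one_I$-component of $\phi^t_a(u)$ is zero. This contradicts $\phi^t_a(u) = x_t \neq 0$, which lies in $\one_I$. Hence every bar contains its left endpoint.

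Combining Steps 2 and 3, every bar has the form $[a,b)$, with $b$ possibly $+\infty$.
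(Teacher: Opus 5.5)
Your argument follows the route the paper intends when it says the corollary follows from \Cref{prop:cech_continuity} and \Cref{prop:semi_bars}, and your Steps 2 and 3 are correct. Right-openness is the injectivity of $M_b \to \varprojlim_{t>b} M_t$ applied to a generator of $\one_I$. Left-closedness is the surjectivity of $M_a \to \varprojlim_{t>a} M_t$ applied to the compatible family of generators, with the projection onto the summand $\one_I$ (a morphism of persistence modules) giving the contradiction. The paper only says ``it is not hard to see'', so for the shape of the bars you have supplied exactly the missing detail.

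The weak point is Step 1, and your caveat does not repair it. $q$-tameness plus right-continuity does not give interval decomposability, even when $M$ is an interleaving limit of modules each having finitely many bars.
\begin{itemize}
\item Take $M = \prod_{n \geq 1} \one_{[0,1/n)}$. Then $M_0 = \mathbb{K}^{\mathbb{N}}$, $M_t$ is finite-dimensional for $t>0$, and the structure maps are coordinate projections.
\item This $M$ is $q$-tame, satisfies $M_s = \varprojlim_{t \to s^+} M_t$ for every $s$, and is $\eps$-interleaved with $\bigoplus_{n < 1/(2\eps)} \one_{[0,1/n)}$.
\item Yet in any interval decomposition no bar could be $\{0\}$, since $\bigcap_{t>0} \ker(M_0 \to M_t) = 0$. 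Every bar containing $0$ would then contain some $1/k$, and $\dim M_{1/k} < \infty$, so only countably many bars would contain $0$. But $\dim M_0$ is uncountable.
\end{itemize}
The same obstruction occurs for \v{C}ech $H_0$ of the height filtration of a compact planar set: a convergent sequence of points on a horizontal line, with consecutive points joined by tents whose heights decrease to $0$. So the first assertion of \Cref{prop:semi_bars}, and the corollary as literally stated, are too strong. This defect is inherited from the paper, not introduced by you.

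What does hold, and is all the paper uses later, is the version with a \emph{uniform} bound. Suppose $d_I(M, M(n)) \to 0$ with $N(M(n)) \leq C$. Then every map $M_s \to M_t$ with $s<t$ has rank at most $C$. By injectivity of $M_s \to \varprojlim_{t>s} M_t$, any finite-dimensional subspace of $M_s$ meets $\ker(M_s \to M_t)$ trivially once $t$ is close enough to $s$, so $\dim M_s \leq C$. Crawley-Boevey's theorem for pointwise finite-dimensional modules then gives the decomposition, after which your Steps 2 and 3 apply verbatim. This is the situation of \Cref{lem:lim_inf&tame} for almost every $\nu$.
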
 

If $D$ is a graded, finite persistence diagram, define its Euler characteristic map as the alternating sum (in degrees) of the numbers of pairs $(a,b)$ such that $a \leq t < b$. Equivalently, seeing pairs $(a,b)$ as  left-closed and right-open intervals:
\[\chi(D) : t \mapsto \sum_{\substack{I \in D \\ \text{ of degree } i}} (-1)^{i} \one_{I}(t),\]
so that when $D$ is the diagram of a module coming from a sublevel set filtration, it just gives the Euler characteristic of the sublevel sets. 

The following result allows us to bound the average difference between Euler characteristic of graded persistence diagrams a function of their bottleneck distance and the number of intervals in them.
The gist of the proof is that given an $\eps$-matching, the symmetric difference of two matched intervals has length at most $2\eps$, while unmatched intervals have length at most $2\eps$.

\begin{lemma}
[Special case of the $\chi$-averaging lemma {\cite[Lemma 5.4]{PersistentIntrinsicVolumes}}]
\label{lem:averaging}
Let $D, D'$ be two finite graded persistence diagrams with $d_B(D, D') \leq \eps$. For a persistence diagram $D$, let $N(D)$ be the total number of intervals in it. We have:
\begin{equation*}
 \int_{\R} \module{\chi(D(t)) - \chi(D'(t))} \mathrm{d}t \leq 2 \eps (N(D) + N(D')).
\end{equation*}
\end{lemma}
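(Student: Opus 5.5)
The plan is to prove the inequality one degree at a time, then sum. Fix an $\eps' > \eps$ for which an $\eps'$-matching $\gamma : C \to C'$ exists; the statement will follow by letting $\eps' \to \eps$, since the right-hand side is continuous in $\eps'$. Bars are read as left-closed, right-open intervals $[a,b)$, following \Cref{cor:shape_intervals}. We will assume that all bars are bounded, so that every integral below is finite. An infinite bar $[a,+\infty)$ can only be matched to another infinite bar. The symmetric difference of two such bars is an interval of length $\module{a-a'} \leq \eps'$, so these cases obey the same bound and need no separate treatment.

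For each degree $i$, write the degree-$i$ part of $\chi(D(t)) - \chi(D'(t))$ as a signed sum of indicator functions. Grouping the terms according to the matching gives
\[
\sum_{I \in C_i} \bigl(\one_I(t) - \one_{\gamma(I)}(t)\bigr) + \sum_{I \in D_i \setminus C_i} \one_I(t) - \sum_{J \in D'_i \setminus C'_i} \one_J(t),
\]
up to the global sign $(-1)^i$. Apply the triangle inequality to the absolute value, first across degrees and then within each degree, and integrate over $\R$. This reduces the problem to bounding three kinds of elementary terms.

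For a matched pair $I=[a,b)$ and $\gamma(I)=[a',b')$ with $\module{a-a'},\module{b-b'} \leq \eps'$, we have $\int \module{\one_I - \one_{\gamma(I)}} = \Vol(I \,\triangle\, \gamma(I))$. This symmetric difference is contained in the union of the segment between $a$ and $a'$ and the segment between $b$ and $b'$, so its length is at most $2\eps'$. An unmatched bar $[a,b)$ satisfies $b-a \leq 2\eps'$ by definition of a matching, so its integral is at most $2\eps'$.

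Adding these bounds over all degrees gives
\[
\int_{\R} \module{\chi(D(t)) - \chi(D'(t))} \diff t \leq 2\eps'\bigl(\module{C} + \module{D \setminus C} + \module{D' \setminus C'}\bigr),
\]
where $\module{C}$ denotes the total number of matched pairs over all degrees. Since $\module{C} = \module{C'}$, we get $\module{C} + \module{D \setminus C} + \module{D'\setminus C'} \leq N(D) + N(D')$, and indeed this sum is at most $\max(N(D),N(D'))$, which is a stronger bound. Letting $\eps' \downarrow \eps$ completes the argument. There is no real obstacle here; the only points requiring care are the handling of infinite bars and the limiting argument in $\eps'$.
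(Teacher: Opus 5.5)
Your argument is correct and is essentially the one the paper relies on: the paper defers the proof to the cited reference and notes only that matched bars have symmetric difference of length at most $2\eps$ while unmatched bars have length at most $2\eps$. One aside is wrong, though harmless since you do not use it: the count $\module{C}+\module{D\setminus C}+\module{D'\setminus C'}$ equals $N(D)+N(D')-\module{C}$, which is at \emph{least} $\max(N(D),N(D'))$; indeed the $\max$ version of the inequality fails for $D=\{[0,2\eps)\}$ and $D'=\{[10,10+2\eps)\}$ in degree $0$ with small $\eps>0$, where $d_B(D,D')=\eps$ but the integral equals $4\eps$.
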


\section{Continuity of normal cycles}

This section is dedicated to proving our main result, stated at the end of the introduction. The argument works by showing that from any subsequence of $N_{X_n}$, we can further extract a subsequence converging to a current, which has to be $N_X$ by Fu's uniqueness result. We fix the following notations for the remainder of the article. 
\begin{itemize}
    \item Let $\dacy(X,Y)$ be the infimum of the $\eps$ such that there exists an $\eps$-acyclic relation between $X$ and $Y$.
    
    \item Let $\eps_n \coloneqq \dhom(X,X_n)$ if $X_n$ is assumed to converge with respect to the homotopy distance, or $\eps_n = \dacy(X_n,X)$ if $X_n$ converges acyclically. By either assumption the sequence $\eps_n$ is nonnegative and converges to zero.  
    \item For any $\nu \in \Sphere^{d-1}$ let $h_{\nu} : x \mapsto \scal{\nu}{x}$, and for any real $t$ let $H_{\nu,t}$ be the closed half-space $h_{\nu}^{-1}(-\infty,t ]$. Note that there is a one-to-one correspondence between $\Sphere^{d-1} \times \R$ and closed half-spaces of $\R^d$ via $(\nu, t) \mapsto H_{\nu, t}$.
    \item For any $(\nu, t) \in \Sphere^{d-1} \times \R$ and for any $Z \subset \R^d$ put $Z^{\nu}_{t} = Z \cap H_{\nu,t}$.

    \item For any $Z \subset \R^d$ denote by $M_i(\nu, Z)$ the persistence module 
    \begin{equation*}
        (H_i(Z^{\nu}_t))_{t \in \R} = (H_i(Z \cap H_{\nu, t}))_{t \in \R},
    \end{equation*}
    that is, the $i$-th persistent Čech homology module associated to the sublevel filtration of $h_{\nu}$ restricted to $Z$. We denote by $\dgm_i(\nu,Z)$ its persistence diagram.
    
    \item We denote by $N(\nu,Z)$ the total number of topological events in $\dgm(\nu,Z)$, that is, the sum over $i$ of numbers of finite bounds of intervals in $\dgm_i(\nu,Z)$. Note that when $(h_{\nu})|_{Z}$ is Morse, this quantity coincides with the number of critical points of $(h_{\nu})|_{Z}$.

\end{itemize}

We will make use of the Vietoris-Begle theorem, which provides isomorphisms in  Čech homology from an acyclic map, that is, a continuous map $f:X \to Y$ whose fibers $f^{-1}(y)$ have the homology group of a point for all $y \in Y$ - note in particular that any acyclic map must be surjective.

\begin{proposition}[Vietoris-Begle \cite{vietoris-begle}]
\label{prop:vietoris_begle}
Let $f: Y \to X$ be an acyclic map between two compact metric spaces. Then for Čech homology over any coefficient ring, in any degree $i$, the linear map induced by $f$, $f_* : H_i(Y) \to H_i(X)$ is an isomorphism.
\end{proposition}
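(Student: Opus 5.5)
The plan is to follow the classical Vietoris--Begle argument, using the Vietoris description of Čech homology for compact metric spaces. For a compact metric space $Z$ and $\eps>0$, let $V_\eps(Z)$ be the Vietoris complex, whose simplices are the finite subsets of $Z$ of diameter less than $\eps$. For $\eps'<\eps$ there are inclusions $V_{\eps'}(Z)\to V_\eps(Z)$. For compact metric spaces, the inverse limit $\lim_{\eps\to 0} H_i(V_\eps(Z))$ over any coefficient ring coincides with Čech homology, because the Vietoris and nerve systems are cofinal in each other. It therefore suffices to work with these inverse systems.

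By uniform continuity of $f$, for every $\delta>0$ there is $\eps>0$ such that $f$ induces a simplicial map $f_\#:V_\eps(Y)\to V_\delta(X)$, and these maps are compatible with refinement. Together they give $f_*$ on the inverse limits. Fix a degree $i$. It is enough to show that $f_*$ is a pro-isomorphism in degrees $\le i$, i.e.\ to construct maps backwards in the inverse system. Concretely, for every $\eps>0$ we want $\delta>0$ and a chain map $g:C_{\le i+1}(V_\delta(X))\to C_{\le i+1}(V_\eps(Y))$, defined up to degree $i+1$, with two properties:
\begin{itemize}
\item $f_\#\circ g$ is chain homotopic to the refinement inclusion into $V_{\eps''}(X)$ for a suitable $\eps''$ depending only on $\eps$, and $\eps''\to 0$ as $\eps\to 0$;
\item $g\circ f_\#$ is chain homotopic, in $V_{\eps}(Y)$, to the corresponding refinement inclusion.
\end{itemize}
Passing to the limit then shows that $f_*$ is injective and surjective on $H_i$.

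The chain map $g$ is built by the method of acyclic carriers, inductively on skeleta. On a vertex $x$, choose any point of $f^{-1}(x)$; the fibre is nonempty because an acyclic map is surjective. For a $k$-simplex $\sigma$ of $V_\delta(X)$, the chain $g(\partial\sigma)$ is already defined. It is a reduced cycle supported in $f^{-1}(B(\sigma,c_{k-1}\delta))$ at some Vietoris scale $\eta_{k-1}$, where $B(\sigma,r)$ denotes the $r$-neighbourhood of $\sigma$ and $c_{k-1}$ is a constant. We define $g(\sigma)$ as a chain bounding it at a coarser scale $\eta_k$, inside $f^{-1}(B(\sigma,c_k\delta))$. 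The homotopies in the two bullet points are built in exactly the same way, using the cone construction on the small sets involved. Since only the skeleta up to dimension $i+1$ or $i+2$ are needed, the required sequence of scales $\delta=\eta_0\ll\eta_1\ll\dots\ll\eta_{i+2}\le\eps$ is finite.

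Everything therefore reduces to a \emph{uniform local acyclicity} statement. For every $\eta>0$ there should be $\delta>0$ such that, for every $x\in X$, every reduced cycle of $V_\delta\bigl(f^{-1}(B(x,\delta))\bigr)$ bounds in $V_\eta\bigl(f^{-1}(B(x,\eta))\bigr)$, in degrees $\le i+1$. Proving this from the hypothesis that each fibre has the Čech homology of a point is the step I expect to be the main obstacle. For a single $x$, I would use two facts. First, $f^{-1}(B(x,\delta))$ lies in a small neighbourhood of the compact fibre $f^{-1}(x)$ once $\delta$ is small, by compactness and continuity of $f$. Second, Čech homology is continuous under decreasing intersections of compact sets. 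The delicate point is that, over an arbitrary ring, the vanishing of the inverse limit does not by itself force each bonding image to vanish at a finite stage (Mittag-Leffler issues). To handle this, I would interpret ``homology of a point'' in the Vietoris sense, which is the original hypothesis of Vietoris and Begle and is the sense in which the statement is classically proved. Under that interpretation, vanishing at a finite coarser stage is exactly what is given. Uniformity in $x$ then follows by covering $X$ with finitely many balls and taking the minimum of the corresponding $\delta$'s.
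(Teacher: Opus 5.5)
\paragraph{Where the gap is.} Your overall architecture is the classical Vietoris argument and is fine: Vietoris complexes, acyclic carriers up to degree $i+1$, a finite ladder of scales, and a pro-isomorphism inducing an isomorphism on inverse limits. The paper gives no proof of its own and simply cites the theorem. The gap is the step you flag yourself. You never derive the finite-scale local acyclicity from the stated hypothesis that the fibres have the \v{C}ech homology of a point. Instead you replace that hypothesis by its ``Vietoris'' version, which is exactly what has to be proved.

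\paragraph{Why it cannot be repaired over an arbitrary ring.} With $\mathbb{Z}$ coefficients and fibres required to be \v{C}ech $\mathbb{Z}$-acyclic, the statement is false. The dyadic solenoid $\Sigma$ has $\check{H}_1(\Sigma;\mathbb{Z})=\varprojlim(\mathbb{Z}\xleftarrow{2}\mathbb{Z}\xleftarrow{2}\cdots)=0$, so it is \v{C}ech $\mathbb{Z}$-acyclic. Let $Y$ be the mapping cylinder of the projection $p\colon\Sigma\to S^1$, and collapse $\Sigma\times\{0\}$; this gives an acyclic quotient map $f$ onto the mapping cone $X$. Here $\check{H}_1(Y;\mathbb{Z})=\mathbb{Z}$. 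By continuity of \v{C}ech homology, $\check{H}_1(X;\mathbb{Z})=\varprojlim_n \mathbb{Z}/2^n$, the $2$-adic integers, and $f_*$ is the non-surjective inclusion. This is exactly the Mittag--Leffler failure you anticipated. So the proposition must be read with field coefficients, which is the paper's standing convention.

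\paragraph{How to close the gap over a field.} Over a field you can supply the missing step rather than change the hypothesis.
\begin{enumerate}
\item For compact $F$ and $\eta'<\eta$, the map $H_*(V_{\eta'}(F))\to H_*(V_\eta(F))$ factors, up to contiguity, through $H_*(V_\eta(S))$ for a finite net $S\subset F$, so it has finite rank.
\item Hence the images of $H_*(V_{\eta'}(F))$ in $H_*(V_\eta(F))$ are finite-dimensional and decrease as $\eta'\to 0$, so they stabilise.
\item Along $\eta=1/k$ the stable images form a tower with surjective maps, and the inverse limit surjects onto each of them.
\item Therefore, if $\check{H}_*(F)$ is that of a point, then for every $\eta$ there is $\eta'$ with $\tilde{H}_*(V_{\eta'}(F))\to\tilde{H}_*(V_\eta(F))$ equal to zero.
\end{enumerate}

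You must then pass from $F=f^{-1}(x)$ to $f^{-1}(B(x,\delta))$.
\begin{itemize}
\item By compactness and continuity of $f$, this set lies in the $\delta'$-neighbourhood of $F$, with $\delta'\to 0$ as $\delta\to 0$.
\item A nearest-point map into $F$ sends a $\delta$-cycle to a cycle of $V_{\delta+2\delta'}(F)$. It is homologous to the original cycle, by contiguity, inside $V_{\delta+2\delta'}(f^{-1}(B(x,\eta)))$.
\item Choosing $\delta+2\delta'\le\eta'$, this cycle bounds in $V_\eta(F)\subset V_\eta(f^{-1}(B(x,\eta)))$.
\end{itemize}
Your finite covering argument then gives uniformity in $x$. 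With these additions your carrier construction becomes a complete proof of the field-coefficient Vietoris--Begle theorem, which is all the paper uses.
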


Key to our approach is the fact that our notions of convergence imply the convergence of persistence diagrams of height functions.

\begin{lemma}[Comparison of persistence modules of height functions]
\label{lem:convergence_lemma}
Let $X$ and $Y$ be two compact subsets of $\R^d$ such that either {$\dacy(X,Y)$} or {$\dhom(X,Y)$} is at most $\eps > 0$.
Then for every $\nu$ we have: 
\[ d_I(M(\nu, X), M(\nu, Y)) \leq \eps. \]
\end{lemma}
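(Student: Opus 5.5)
We treat the two hypotheses separately. In each case we build, for every $t$, linear maps $u_t : H_i(X^\nu_t) \to H_i(Y^\nu_{t+\eps})$ and $v_t : H_i(Y^\nu_t) \to H_i(X^\nu_{t+\eps})$. We then check two things: that they commute with the inclusion-induced maps, and that $v_{t+\eps}\circ u_t$ and $u_{t+\eps}\circ v_t$ agree with the structure maps $M_t \to M_{t+2\eps}$. The key geometric fact is that $h_\nu$ is $1$-Lipschitz. Hence a map moving points by at most $\eps$ sends $X^\nu_t$ into $Y^\nu_{t+\eps}$, and a homotopy whose trajectories stay within $2\eps$ of the starting point keeps $X^\nu_t$ inside $X^\nu_{t+2\eps}$. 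We may replace $\eps$ by any $\eps' > \eps$ realizing the infimum. An $\eps'$-interleaving for every $\eps'>\eps$ gives $d_I \le \eps$, since $d_I$ is defined as an infimum.

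\textbf{Homotopy distance case.} Take $f,g,H_X,H_Y$ as in the definition. Then $f$ restricts to a map $X^\nu_t \to Y^\nu_{t+\eps}$, and these restrictions commute with the inclusions on the nose. So $u_t := (f|)_*$ is a morphism $M(\nu,X)\to M(\nu,Y)^\eps$, and likewise $v_t := (g|)_*$. The composite $g\circ f$ maps $X^\nu_t$ into $X^\nu_{t+2\eps}$. The homotopy $H_X$ restricted to $[0,1]\times X^\nu_t$ takes values in $X^\nu_{t+2\eps}$, because $h_\nu(H_X(s,x)) \le h_\nu(x)+2\eps$. It is therefore a homotopy inside $X^\nu_{t+2\eps}$ between the inclusion and $g\circ f$. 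Homotopy invariance of Čech homology for compact metric spaces then gives $v_{t+\eps}\circ u_t = \iota_*$. The argument for $H_Y$ is symmetric.

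\textbf{Acyclic case.} Let $A\subset X\times Y$ be the closed graph of an $\eps$-acyclic relation, with projections $\pi_X,\pi_Y$. For each $t$ set $A_t := A\cap \pi_X^{-1}(X^\nu_t)$. This set is compact, and $\pi_X : A_t \to X^\nu_t$ is acyclic: its fibre over $x$ is $\{x\}\times$(the set of points related to $x$), which is acyclic by assumption. By \Cref{prop:vietoris_begle}, $(\pi_X)_* : H_i(A_t)\to H_i(X^\nu_t)$ is an isomorphism. Moreover $\pi_Y(A_t)\subset Y^\nu_{t+\eps}$. We define
\[ u_t := (\pi_Y)_*\circ (\pi_X|_{A_t})_*^{-1}, \]
and build $v_t$ symmetrically from $B_t := A\cap\pi_Y^{-1}(Y^\nu_t)$. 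Naturality in $t$ holds because the isomorphisms commute with the inclusions $A_s\subset A_t$.

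\textbf{Main obstacle.} The difficult step is the interleaving identity $v_{t+\eps}\circ u_t = \iota_*$. To handle it we introduce the compact space
\[ C := \{(x,y,x') : (x,y)\in A,\ (x',y)\in A,\ x\in X^\nu_t\}. \]
Every such $x'$ lies in $X^\nu_{t+2\eps}$. The projection $C\to A_t$, $(x,y,x')\mapsto(x,y)$, has acyclic fibres, namely the sets of points related to $y$. So it induces an isomorphism by \Cref{prop:vietoris_begle}. Similarly, $C$ maps to $B_{t+\eps}$ via $(x,y,x')\mapsto(x',y)$. In homology, the composite $v_{t+\eps}\circ u_t$ equals the map induced by $(x,y,x')\mapsto x'$, while $\iota_*$ equals the map induced by $(x,y,x')\mapsto x$. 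Both maps go from $C$ to $X^\nu_{t+2\eps}$, and both pass through the Vietoris–Begle isomorphism $H_i(C)\cong H_i(X^\nu_t)$.

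It remains to show these two induced maps agree. Since $\|x-x'\|\le 2\eps$, the straight-line homotopy is not available, because segments need not lie in $X$. Instead we compare the two projections $C\to X$ through the diagonal: both composites $H_i(X^\nu_t)\cong H_i(C)\to H_i(X^\nu_{t+2\eps})$ should factor through an acyclic-fibre argument. Our first attempt is to show that the map $C\to A_t$ and the map $(x,y,x')\mapsto (x',y)$ become homologous after composing with $\pi_X$. For this we use the subspace $C_0 = \{x=x'\}\subset C$ and the fact that the fibres of $C\to A_t$ are acyclic and contain the point $x'=x$. This forces the section $A_t\to C_0\subset C$ to be a homology inverse of the projection, and on $C_0$ the two projections coincide. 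This section step is where care is needed.
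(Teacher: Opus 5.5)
Your proof is correct. In the homotopy case you prove directly what the paper simply cites from \cite{PersistentIntrinsicVolumes}, using homotopy invariance of \v{C}ech homology and the $2\varepsilon$ bound on trajectories. In the acyclic case your maps $u_t,v_t$, built from Vietoris--Begle on the restricted graphs, are the paper's. The difference is the step you call the main obstacle.

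Your section argument there is in fact rigorous. Write $p\colon C\to A_t$ for the projection and $s(x,y)=(x,y,x)$. Then $p\circ s=\id_{A_t}$ and $p_*$ is bijective, so $s_*=p_*^{-1}$, and any two maps agreeing on $s(A_t)$ induce the same map on $H_i(C)$.

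But $C$ is not needed. Since $h_\nu$ is $1$-Lipschitz, $A_t\subset B_{t+\varepsilon}$, so $\pi_Y|_{A_t}=\pi_Y|_{B_{t+\varepsilon}}\circ j$ with $j$ the inclusion. Hence $(\pi_Y|_{B_{t+\varepsilon}})_*^{-1}(\pi_Y|_{A_t})_*=j_*$, and
\[ v_{t+\varepsilon}\circ u_t=(\pi_X|_{B_{t+\varepsilon}}\circ j)_*\,(\pi_X|_{A_t})_*^{-1}=(\iota\circ\pi_X|_{A_t})_*\,(\pi_X|_{A_t})_*^{-1}=\iota_*. \]
The map $(x,y,x')\mapsto(x',y)$ composed with $s$ is exactly $j$, so your detour proves this identity in disguise. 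The mirror nesting $B_t\subset A_{t+\varepsilon}$ is what the paper's diagram chase uses. It routes $H_i(Y^\nu_t)\to H_i(Y^\nu_{t+2\varepsilon})$ through $H_i(A_{t+\varepsilon})$, so that the Vietoris--Begle isomorphism for $\pi_X$ cancels.

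What your bookkeeping buys is that you only invert $\pi_X$ on $A_t$ and $\pi_Y$ on $B_t$, where the fibres are full relation sets. The paper, as written, inverts $\pi_Y\colon A_{t+\varepsilon}\to p_Y(X^\nu_{t+\varepsilon})$. Its fibre over a point $y\notin Y^\nu_t$ is the relation set of $y$ cut by a half-space, which need not be acyclic. The paper's chase is made sound by passing through your $B_t\subset A_{t+\varepsilon}$ instead.
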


\begin{proof}
If $\dhom(X,Y) \leq \eps$, we have $d_I(M(\nu, X), M(\nu, Y)) \leq \eps$ as in \cite[Theorem 1.1]{PersistentIntrinsicVolumes}.

Else let $\eps' > \eps \geq \dacy(X,Y)$, let $A \subset X \times Y$ be an $\eps'$-acyclic relation between $X$ and $Y$.
Denote by $\pi_X : A \to X$ and $\pi_Y : A \to Y$ the projections on the first and second factor of $X \times Y$. For $Z_1 \subset X, Z_2 \subset Y$ let $p_X(Z_2) = \pi_X(\pi_Y^{-1}(Z_2))$ and $p_Y(Z_1) = \pi_Y(\pi_X^{-1}(Z_1))$ be the subset of $X$ (resp. $Y$) consisting of elements in relation with at least one element in $Z_2$ (resp. $Z_1$).
Since each $h_{\nu}$ is 1-Lipschitz, we have $Y^{\nu}_t \subset p_Y(X^{\nu}_{t + \eps'}) \subset Y^{\nu}_{t + 2\eps'}$.
Moreover, by the Vietoris-Begle theorem, both maps $\pi_X, \pi_Y$ restricted to $\pi_X^{-1}(X^\nu_{t+\varepsilon})$ induce isomorphisms in each homology degree.  This leads to the following commutative diagram where horizontal maps are induced by inclusions, solid vertical maps by $\pi_X$ or $\pi_Y$, dotted vertical maps by their inverse.

% https://q.uiver.app/#q=WzAsMTAsWzEsMCwiSF9pKFlee1xcbnV9X3QpIl0sWzEsMV0sWzIsMCwiSF9pKHBfWShYXlxcbnVfe3QrXFx2YXJlcHNpbG9ufSkpKSJdLFszLDAsIkhfaShZXntcXG51fV97dCsyXFx2YXJlcHNpbG9ufSkiXSxbMiwxLCJIX2koXFxwaV9YXnstMX0oWF5cXG51X3t0K1xcdmFyZXBzaWxvbn0pKSJdLFswLDBdLFswLDFdLFs0LDFdLFs0LDBdLFsyLDIsIkhfaShYXntcXG51fV97dCtcXHZhcmVwc2lsb259KSJdLFswLDJdLFsyLDNdLFs1LDAsIiIsMCx7InN0eWxlIjp7ImJvZHkiOnsibmFtZSI6ImRhc2hlZCJ9fX1dLFszLDgsIiIsMCx7InN0eWxlIjp7ImJvZHkiOnsibmFtZSI6ImRhc2hlZCJ9fX1dLFs0LDldLFs5LDQsIiIsMSx7ImxhYmVsX3Bvc2l0aW9uIjo2MCwiY3VydmUiOi0xLCJzdHlsZSI6eyJib2R5Ijp7Im5hbWUiOiJkb3R0ZWQifX19XSxbMCw5LCIiLDEseyJvZmZzZXQiOjJ9XSxbOSwzLCIiLDEseyJvZmZzZXQiOjJ9XSxbNCwyXSxbMiw0LCIiLDAseyJvZmZzZXQiOi0xLCJjdXJ2ZSI6LTEsInN0eWxlIjp7ImJvZHkiOnsibmFtZSI6ImRvdHRlZCJ9fX1dXQ==
\[\begin{tikzcd}
	{} & {H_i(Y^{\nu}_t)} & {H_i(p_Y(X^\nu_{t+\varepsilon'}))} & {H_i(Y^{\nu}_{t+2\varepsilon'})} & {} \\
	{} & {} & {H_i(\pi_X^{-1}(X^\nu_{t+\varepsilon'}))} && {} \\
	&& {H_i(X^{\nu}_{t+\varepsilon'})}
	\arrow[dashed, from=1-1, to=1-2]
	\arrow[from=1-2, to=1-3]
	\arrow[shift right=2, from=1-2, to=3-3]
	\arrow[from=1-3, to=1-4]
	\arrow[shift left, bend left = 30, dotted, from=1-3, to=2-3]
	\arrow[dashed, from=1-4, to=1-5]
	\arrow[from=2-3, to=1-3]
	\arrow[from=2-3, to=3-3]
	\arrow[shift right=2, from=3-3, to=1-4]
	\arrow[bend left = 30, shift left, dotted, from=3-3, to=2-3]
\end{tikzcd}\]

Chasing the diagram we end up with linear maps $H_i(Y^{\nu}_t) \to H_i(X^{\nu}_{t+\eps'})$ and $H_i(X^{\nu}_{t+\eps'}) \to H_i(Y^{\nu}_{t+2\eps'})$, which after composition yield the inclusion-induced map $H_i(Y^{\nu}_t) \to H_i(Y^{\nu}_{t + 2\eps'})$. Considering as well the linear maps obtained similarly by exchanging the role of $X$ and $Y$, we have effectively built an $\eps'$-interleaving between modules $M_i(\nu, X)$ and $M_i(\nu, Y)$. Letting $\eps'$ go to $\eps$ yields the desired result.
\end{proof}

Next, we show that the average number of critical points of height functions on a smooth manifold is bounded by the mass of its normal cycle.

\begin{lemma}[Bound on the average number of critical points of height functions]
\label{lem:crit_mass}
Let $Z$ be a smooth compact domain of $\R^d$. Then
\begin{equation*}
\int_{\nu \in \Sphere^{d-1}} N(\nu, Z) \diff \mathcal{H}^{d-1}(\nu) \leq M(N_{Z}).
\end{equation*}   
\end{lemma}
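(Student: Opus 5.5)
We argue through the unit normal bundle. For a smooth compact domain $Z$, with outward unit normal $n : \partial Z \to \Sphere^{d-1}$, the normal cycle $N_Z$ is the integral current carried by the Lipschitz $(d-1)$-manifold
\[\Nor(Z) = \{(x,n(x)) : x \in \partial Z\},\]
with its natural orientation. Its mass is therefore $\mathcal{H}^{d-1}(\Nor(Z))$.

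We then identify the critical points of height functions through the Gauss map. Consider the projection $\pi_1 : \Nor(Z) \to \Sphere^{d-1}$, $(x,n) \mapsto n$, which is the Gauss map of $\partial Z$. For a regular value $-\nu$ of $\pi_1$, the height function $h_\nu$ restricted to $\partial Z$ is Morse, since critical points of $h_\nu|_{\partial Z}$ are exactly the points where $n(x) = \pm\nu$, and nondegeneracy is equivalent to the Gauss map being a local diffeomorphism there. Sard's theorem gives that almost every $\nu$ has both $\pm\nu$ regular values. For such $\nu$, the topological changes of $Z^\nu_t$ happen only at points of $\partial Z$ where $n(x) = -\nu$. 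These are the boundary critical points where the half-space $H_{\nu,t}$ first touches $Z$ from outside: interior points of $Z$ contribute nothing, and at points with $n = \nu$ the sublevel set does not change homotopy type. By standard Morse theory for manifolds with boundary, each such point changes the Čech homology of $Z^\nu_t$ in exactly one degree by one dimension, so it contributes at most one finite endpoint of a bar. Hence
\[N(\nu,Z) \le \#\pi_1^{-1}(-\nu).\]
Even without the refined boundary argument, the count of all points where $n = \pm\nu$ gives $N(\nu,Z) \le \#\pi_1^{-1}(\nu) + \#\pi_1^{-1}(-\nu)$, which only costs a factor $2$. The main step to justify is therefore to get the sharp constant, that is, to use only the inward-facing points $n = -\nu$.

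We close with the area formula. Applied to the Lipschitz map $\pi_1$ on the $(d-1)$-rectifiable set $\Nor(Z)$, it gives
\[\int_{\Sphere^{d-1}} \#\pi_1^{-1}(-\nu)\,\diffH^{d-1}(\nu) = \int_{\Nor(Z)} J\pi_1 \,\diffH^{d-1}.\]
The map $\pi_1$ is the restriction of the orthogonal projection $\R^d \times \R^d \to \R^d$, which is $1$-Lipschitz, so its tangential Jacobian satisfies $J\pi_1 \le 1$. The right-hand side is therefore at most $\mathcal{H}^{d-1}(\Nor(Z)) = M(N_Z)$, which proves the claimed inequality.

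The step I expect to be the main obstacle is the Morse-theoretic justification that only points with $n(x) = -\nu$ produce bar endpoints, each producing exactly one. It also has to be checked that Čech homology agrees with singular homology here, which holds because the sublevel sets are compact manifolds with corners for regular $\nu$ and $t$, and that the resulting bars are finite in number. If only the factor-$2$ version can be secured, I would instead note that the number of points with $n = \nu$ equals the number with $n = -(-\nu)$, and rewrite the sum so that the resulting total is compared with $M(N_Z)$ without the extra factor.
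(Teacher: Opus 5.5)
Your argument is correct and is essentially the paper's proof: for almost every $\nu$ the height function is Morse, its topological events occur exactly at the points $x$ with $(x,-\nu) \in \Nor(Z)$, and the (co)area formula for the projection $\Nor(Z) \to \Sphere^{d-1}$, whose Jacobian is at most one, bounds the integral by $\mathcal{H}^{d-1}(\Nor(Z)) = M(N_Z)$. The sharp count $N(\nu,Z) = \#\pi_1^{-1}(-\nu)$ is standard Morse theory for domains with boundary, which the paper takes as classical, so your fallback is not needed. This is fortunate, because relabeling $\nu \mapsto -\nu$ cannot remove the factor $2$: both counts integrate to the same quantity $\int_{\Nor(Z)} J\pi_1$.
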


\begin{proof}
    It is a classical result that given a smooth domain $Z$, for almost-all $\nu$ in $\Sphere^{d-1}$, $(h_{\nu})|_{Z}$ is a Morse function, the critical points of this function being the points $x$ at which $-\nu$ is an outward pointing normal. Denoting by $\Nor(Z) \subset Z \times \Sphere^{d-1}$ the unit normal bundle of $Z$, this amounts to $N(\nu, Z) = \# \left \{ x \in Z, (x,-\nu) \in \Nor(Z) \right \}$. Now by a change of variables using the projection map $\pi : \Nor(Z) \to \Sphere^{d-1}$, $(x, p) \mapsto p$, the coarea formula \cite[3.2.22]{GMT} reads
    \begin{align*}
        \int_{\Sphere^{d-1}} N(\nu, Z) \diffH^{d-1}(\nu) = & \int_{\Sphere^{d-1}} \# \pi^{-1}( \{ \nu \}) \diffH^{d-1}(\nu) \\
        = & \int_{\Nor(Z)} J_{\pi}(x,p) \diffH^{d-1}(x,p),
    \end{align*}
    where $J_{\pi}(x,p)$ is the Jacobian of $\pi$ at $(x,p)$. As $\pi$ is a projection this Jacobian is at most one, yielding that the integral is at most  $\int_{\Nor(Z)} 1 \diff \mathcal{H}^{d-1}(x) = M(N_{Z})$.
\end{proof}

\begin{lemma}
\label{lem:lim_inf&tame}
Let $X$ be a compact subset of $\R^d$ and assume $X$ is the limit of a sequence of smooth domains $X_n$ with respect to either the acyclic convergence or the homotopy distance. Assume further that $\liminf_{n\to \infty} M(N_{X_n}) < + \infty$, where $N_{X_n}$ denotes the normal cycle of $X_n$. Then we have

    \begin{equation*}
    \int_{\Sphere^{d-1}} \liminf_{n \to \infty} N(\nu, X_n) \diff \nu \leq \liminf_{n \to \infty} M(X_n) 
    \end{equation*} 
Moreover, for almost every $\nu$ in $\Sphere^{d-1}$,
    \begin{equation*}
    N(\nu, X) \leq \liminf_{n \to \infty} N(\nu, X_n) < + \infty.
    \end{equation*}
\end{lemma}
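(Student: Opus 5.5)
The first inequality follows from Fatou's lemma combined with \Cref{lem:crit_mass}. Here $M(X_n)$ in the statement is read as $M(N_{X_n})$. The functions $\nu \mapsto N(\nu, X_n)$ are nonnegative. I would check that they are measurable: for a smooth domain, $N(\nu,X_n)$ agrees almost everywhere with the cardinality of the fiber of the projection $\Nor(X_n) \to \Sphere^{d-1}$, which is measurable, and this is all the coarea argument needs. Fatou then gives
\[
\int_{\Sphere^{d-1}} \liminf_{n} N(\nu, X_n) \diffH^{d-1}(\nu) \leq \liminf_{n} \int_{\Sphere^{d-1}} N(\nu, X_n) \diffH^{d-1}(\nu) \leq \liminf_n M(N_{X_n}),
\]
where the last step applies \Cref{lem:crit_mass} to each $X_n$. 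Since the right-hand side is finite by hypothesis, the integrand $\liminf_n N(\nu, X_n)$ is finite for almost every $\nu$, which proves the finiteness claim in the second statement.

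For the second statement, fix $\nu$ with $L \coloneqq \liminf_n N(\nu,X_n) < +\infty$, and pass to a subsequence along which $N(\nu,X_n) = L$ for all $n$. For a smooth domain the sublevel-set modules $M_i(\nu,X_n)$ are Čech modules. By \Cref{cor:shape_intervals} they are interval decomposable with left-closed, right-open intervals, and finitely many of them since $L<\infty$. The number of finite endpoints of $\dgm(\nu, X_n)$ is at most $L$. Because $X_n$ is compact, each degree has only intervals of the form $[a,b)$ with $b<\infty$, except the degree-$0$ essential classes $[a,+\infty)$ and the top-degree ones. In every case each interval contributes at least one finite endpoint, so the total number of bars $N(M(\nu,X_n))$ is at most $L$.

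By \Cref{lem:convergence_lemma}, $d_I(M_i(\nu,X), M_i(\nu,X_n)) \leq \eps_n \to 0$ in each degree. The module $M_i(\nu,X)$ satisfies the right-continuity of \Cref{prop:cech_continuity}. So \Cref{prop:semi_bars}, applied degree by degree, shows it is interval decomposable and that its number of bars is at most the liminf of the number of bars of $M_i(\nu,X_n)$. Only finitely many degrees ($0,\dots,d$) are involved.

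The main obstacle is converting this into the inequality on \emph{endpoints}, $N(\nu,X) \leq L$, since an infinite bar has one finite endpoint and a finite bar has two. I would refine the argument of \Cref{prop:semi_bars}, which uses an $\eps_n$-matching from the isometry theorem. Let $I$ be a bar of $\dgm(\nu,X)$ of length greater than $2\eps_n$; the matching pairs it with a bar $J$ of $\dgm(\nu,X_n)$ whose endpoints are $\eps_n$-close. Infinite bars can only be matched to infinite bars, and finite bars to finite bars, since $\norme{\cdot}_\infty$-closeness forces equal finiteness. Hence matched bars have the same number of finite endpoints. Because all bars of $M(\nu,X)$ have positive length, for large $n$ every bar of $\dgm(\nu,X)$ is matched, and distinct bars are matched to distinct bars. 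This gives $N(\nu,X) \leq N(\nu,X_n) = L$.
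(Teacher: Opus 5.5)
Your proof is correct and follows the paper's route. Fatou's lemma with \Cref{lem:crit_mass} gives the first inequality, and \Cref{lem:convergence_lemma} with \Cref{prop:semi_bars} gives the second, where your extra matching step (infinite bars can only be matched to infinite bars, and every bar of $\dgm(\nu,X)$ is eventually matched) properly converts the bound on the number of bars into the stated bound on the number of finite endpoints $N(\nu,X)$, a distinction the paper's one-line appeal to \Cref{prop:semi_bars} glosses over. One small inaccuracy that does not affect the argument: essential bars $[a,+\infty)$ occur in every degree where $X_n$ has nontrivial homology, not only in degree $0$ and top degree, but every bar still has a finite left endpoint by compactness, which is all you actually use.
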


\begin{proof}

 The first inequality follows from the mass bound of \Cref{lem:crit_mass} and Fatou's lemma. This implies that the quantity $\liminf_{n \to \infty} N(\nu, X_n)$ is finite for almost every $\nu \in \Sphere^{d-1}$. To get the bound on $N(\nu, X)$, note that thanks to  \Cref{lem:convergence_lemma}, acyclic or homotopy distance convergence implies that the modules $M_i(\nu, X_n)$ converge to $M_i(\nu, X)$ in the interleaving distance. From \Cref{prop:semi_bars}, the bound on the number $N(\nu, X)$ of intervals of $M(\nu,X)$ follows.
 
\end{proof}

Under the same assumptions, we show that the map $\psi_n : (\nu, t) \mapsto \chi(X_n \cap H_{\nu,t})$ converges to $\psi : (\nu, t) \mapsto \chi(X \cap H_{\nu,t})$.

\begin{lemma}[$L^1$ convergence of $\psi_n$]
\label{lem:L1_conv}
Let $X$ be a compact subset of $\R^d$ and assume $X$ is the limit of a sequence of smooth domains $X_n$ with respect to either the acyclic convergence or the homotopy distance. Assume further that there exists a positive $M$ such $M(N_{X_n}) \leq M$ for all $n$. Then 
\begin{equation*}
    \int_{\Sphere^{d-1} \times \R} \module{\psi_n(\nu, t) - \psi(\nu,t)} \diffH^{d}(\nu,t) = \norme{\psi_n - \psi}_{1, \Sphere^{d-1} \times \R} \xrightarrow[n \to \infty]{} 0. 
\end{equation*}
\end{lemma}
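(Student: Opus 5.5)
The plan is to combine \Cref{lem:convergence_lemma}, \Cref{lem:averaging} and the bar counts of \Cref{lem:lim_inf&tame}, and to conclude with dominated convergence in $\nu$.

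\textbf{Per-direction estimate.} Fix $\nu$ in the full-measure set where $h_\nu|_{X_n}$ is Morse for all $n$ (a countable intersection of full-measure sets) and where $N(\nu,X)<\infty$.

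By \Cref{cor:shape_intervals}, both modules $M(\nu,X_n)$ and $M(\nu,X)$ are interval decomposable. Their intervals are left-closed and right-open, so $t\mapsto\chi(\dgm(\nu,\cdot)(t))$ equals $\psi_n(\nu,\cdot)$, resp. $\psi(\nu,\cdot)$.

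By \Cref{lem:convergence_lemma} and the isometry theorem, $d_B(\dgm(\nu,X_n),\dgm(\nu,X))\le\eps_n$. Then \Cref{lem:averaging} gives
\[
\int_\R|\psi_n(\nu,t)-\psi(\nu,t)|\,dt\le 2\eps_n\bigl(N(\nu,X_n)+N(\nu,X)\bigr).
\]
There is one counting point to settle. \Cref{lem:averaging} uses the number of intervals, whereas $N(\nu,\cdot)$ counts finite endpoints. For compact sets, the only infinite bars are the $[a,+\infty)$ bars in the top homology of the set. Since the interleaving is a finite shift, these are matched with each other, so their contribution to the integral is still bounded by $2\eps_n$ per bar. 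The number of such bars is at most the number of finite left endpoints, so every interval has a finite endpoint. Therefore the number of intervals is at most $N(\nu,\cdot)$, and the bound above holds as written.

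\textbf{Integration in $\nu$.} Integrating over $\Sphere^{d-1}$ and using \Cref{lem:crit_mass} gives
\[
\norme{\psi_n-\psi}_1\le 2\eps_n\Bigl(M+\int_{\Sphere^{d-1}}N(\nu,X)\,d\nu\Bigr).
\]
By \Cref{lem:lim_inf&tame} and Fatou, $\int N(\nu,X)\,d\nu\le\liminf_n\int N(\nu,X_n)\,d\nu\le M$. Hence $\norme{\psi_n-\psi}_1\le 4M\eps_n\to0$.

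\textbf{Obstacles.} The main thing to check is that everything is measurable in $(\nu,t)$. For $\psi_n$ this holds by Morse theory. For $\psi$ and $N(\nu,X)$ I would argue it via pointwise limits along a sequence, for instance by using the a.e. bound and Tonelli on the nonnegative integrand $|\psi_n-\psi|$; if needed, I would replace the integral by an upper integral, which is enough for convergence to $0$.

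A secondary point is that $\psi$ is finite only almost everywhere. This is handled by the a.e. finiteness of $N(\nu,X)$, which gives a finite diagram for almost every $\nu$, so $\psi(\nu,\cdot)$ is well defined for such $\nu$.
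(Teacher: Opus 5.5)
Your per-direction estimate and its integration over the sphere follow the paper's argument: \Cref{lem:convergence_lemma}, the isometry theorem and \Cref{lem:averaging}, then \Cref{lem:crit_mass} and \Cref{lem:lim_inf&tame}. Your counting remark is a correct clarification that the paper leaves implicit. Every bar has a finite left endpoint, so the number of bars is at most $N(\nu,\cdot)$. Note, though, that the infinite bars are the essential classes of $H_*(Z)$ in every degree, not only the top one. When integrating, bound $N(\nu,X)$ by the measurable function $\liminf_n N(\nu,X_n)$ rather than integrating $N(\nu,X)$ directly.

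The genuine gap is the measurability of $\psi$. The paper treats this as a separate step, and your last paragraph does not settle it. Your three suggestions each fail:
\begin{itemize}
\item Applying Tonelli to $\module{\psi_n-\psi}$ presupposes that $\psi$ is jointly measurable, which is exactly what is in question.
\item Passing to an upper integral does not help. Without joint measurability, the fiberwise bounds give no control of the upper integral on $\Sphere^{d-1}\times\R$; under CH, Sierpi\'nski's set is a counterexample to any such Tonelli-type inequality.
\item You do not know that $\psi$ is a product-a.e.\ limit of the $\psi_n$. Your estimate only gives $L^1(\R)$ convergence on each fiber, with subsequences depending on $\nu$.
\end{itemize}

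The missing idea is the paper's Cauchy argument. By the triangle inequality through $\psi$ on each fiber,
\[
\int_\R \module{\psi_i(\nu,t)-\psi_j(\nu,t)}\,\diff t \le 2\eps_i\bigl(N(\nu,X_i)+N(\nu,X)\bigr)+2\eps_j\bigl(N(\nu,X_j)+N(\nu,X)\bigr).
\]
The left side is a measurable function of $\nu$ because $\psi_i-\psi_j$ is measurable. Integrating therefore shows that $\psi_n-\psi_1$ is Cauchy in $L^1(\Sphere^{d-1}\times\R)$ without ever integrating $\psi$. Its limit $\Psi^*$ satisfies, for a.e.\ $\nu$, $\psi(\nu,\cdot)=\Psi^*(\nu,\cdot)+\psi_1(\nu,\cdot)$ a.e.\ in $t$.

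To upgrade this fiberwise identity to joint measurability, use \Cref{cor:shape_intervals}: for a.e.\ $\nu$, $\psi(\nu,\cdot)$ is a finite right-continuous step function. Hence $\psi(\nu,t)=\lim_k k\int_t^{t+1/k}(\Psi^*+\psi_1)(\nu,s)\,\diff s$ for every $t$, which is a jointly measurable expression. Only then is your final bound $\norme{\psi_n-\psi}_1\le 4M\eps_n$ meaningful.
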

\begin{proof}
By \Cref{lem:convergence_lemma} and the isometry theorem between persistent modules and their associated persistent diagrams, we have for every $\nu$
\begin{equation*}
    d_B(\dgm(\nu, X),\dgm(\nu, X_n)) = d_I(M(\nu,X), M(\nu, X_n)) \leq \eps_n.
\end{equation*} 

Applying the $\chi$-averaging Lemma \ref{lem:averaging} then yields
\begin{equation}
\begin{aligned}
\label{eq:borne_R}
\int_{\R} \module{\psi_n(\nu,t) - \psi(\nu,t)} \diff t = & \int_{\R} \module{ \chi(X \cap H_{\nu,t}) - \chi(X_n \cap H_{\nu,t})} \diff t \\
 \leq & \; 2 \eps_n (N(\nu, X_n) + N(\nu, X)). 
\end{aligned}
\end{equation}    

Since the $X_n$ are smooth domains, each map $\psi_n$ is measurable.
Assuming $\psi$ is measurable, using the upper bounds of \Cref{lem:lim_inf&tame}, we obtain
\begin{equation*}
\begin{aligned}\label{eq:psi_n}
            \int_{\Sphere^{d-1} \times \R} \module{\psi_n(\nu, t) - \psi(\nu,t)} \diffH^{d}(\nu,t) \leq 2 \eps_n (M(X_n) + M).
\end{aligned}
\end{equation*}
from which the desired result follows.

However for now we only know that for all $\nu$, the map $t \mapsto \psi(\nu,t)$ is measurable, as these maps are linear combinations of indicator functions of the intervals in $M(\nu,X)$. The inequality in \Cref{eq:borne_R} shows that for almost all $\nu$, $ t \mapsto \psi_n(\nu, t) - \psi(\nu, t)$ converges to 0 in $L^1(\R)$. 

To show that $\psi$ is measurable, observe that applying the triangle inequality to \Cref{eq:borne_R}, integrating over the sphere and using the upper bounds of \Cref{lem:lim_inf&tame} we get
\[ \norme{\psi_i - \psi_j}_{1, \Sphere^{d-1} \times \R} \leq 2\max(\eps_{i}, \eps_j)(M(X_i) + M(X_j) + 2 M). \]
In particular the maps $\psi^*_n \coloneqq \psi_n - \psi_1$ are $L^1$ in $\Sphere^{d-1} \times \R$ and form a Cauchy sequence, which must converge to some map $\Psi^*$ in $L^1(\Sphere^{d-1} \times \R)$. Extracting a subsequence of $\psi^*_n$, we can assume that $\psi^*_n$ also converges almost everywhere to $\Psi^*$. Now $\psi^*_n(\nu, \cdot)$ converges to both $\Psi^*(\nu, \cdot)$ and $\psi(\nu, \cdot) - \psi_1(\nu, \cdot)$, so that almost everywhere in $\Sphere^{d-1} \times \R$ we have $\psi = \Psi^* + \psi_1$, hence $\psi$ is measurable. 

  \end{proof}

We are now in position to prove our main theorem.
\begin{theorem}[Continuity of normal cycles]
\label{th:MR}
   Let $X_n$ be a sequence of smooth compact subsets of $\R^d$, converging to $X \subset \R^d$ either acyclically or with respect to homotopy distance, and denote its associated sequence of normal cycles by $N_{X_n}$. 
    Assume that the sequence $M(N_{X_n})$ is uniformly bounded, the sequence of normal cycles $N_{X_n}$ converges in flat-norm to $N_X$.
\end{theorem}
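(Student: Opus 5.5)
The argument follows the strategy described in the introduction. We show that every subsequence of $N_{X_n}$ has a further subsequence converging in flat norm to a current $T$ that satisfies the three conditions of \Cref{th:uniqueness} for $X$. Uniqueness then forces $T = N_X$, so $N_X$ exists. Since every subsequence has a sub-subsequence with the same limit, the whole sequence converges to $N_X$.

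\textbf{Extraction of a limit.} Fix a subsequence. Each $N_{X_n}$ is a $(d-1)$-integral cycle with support in $\R^d \times \Sphere^{d-1}$. The masses are uniformly bounded, and the supports lie in a fixed compact set, because $X_n$ is Hausdorff-close to the compact $X$. \Cref{th:currents_compactness} therefore gives a further subsequence converging in flat norm to an integral current $T$.

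\textbf{The first two axioms.} The cycle condition passes to the limit because $\partial$ is continuous for the flat norm: $F(\partial S) \leq F(S)$. The Legendrian condition passes to the limit because $T_n \mapsto T_n \mres \alpha$ is continuous for weak convergence. Indeed, $T_n\mres\alpha(\omega) = T_n(\alpha\wedge\omega)$, and $\alpha\wedge\omega$ is smooth with compact support. Flat convergence with bounded mass implies weak convergence, so $T \mres \alpha = \lim N_{X_n}\mres\alpha = 0$. The support condition is preserved because the supports are closed and contained in $\R^d\times\Sphere^{d-1}$.

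\textbf{The Euler characteristic axiom (main obstacle).} Set
\[
\phi_n(\nu,t) = p_{\#}\tranche{N_{X_n}}{\pi_1}{-\nu}(\one_{(-\infty,t]}), \qquad \phi(\nu,t) = p_{\#}\tranche{T}{\pi_1}{-\nu}(\one_{(-\infty,t]}).
\]
Since $X_n$ is smooth, $\phi_n = \psi_n$ almost everywhere. By \Cref{lem:L1_conv}, $\psi_n \to \psi$ in $L^1(\Sphere^{d-1}\times\R)$, so it suffices to show that $\phi_n \to \phi$ in some weak sense, for instance in $L^1_{\mathrm{loc}}$ or in distributions. Then $\phi = \psi$ almost everywhere, which is exactly the third axiom.

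To obtain this weak convergence, test against a smooth compactly supported $f(\nu,t)$. Write
\[
\int f\,\phi_n = \int_{\Sphere^{d-1}} \tranche{N_{X_n}}{\pi_1}{-\nu}\bigl(p^*F(\nu,\cdot)\bigr)\,\diff\nu,
\]
where $F(\nu,s)=\int_s^\infty f(\nu,t)\,dt$. The function $F$ is smooth in $s$ but not compactly supported, so it needs a cutoff. Because the supports are uniformly bounded, $p$ is bounded on them and we may multiply by a cutoff without changing the integral. By the slicing formula \eqref{eq:slice}, this equals $N_{X_n}(\omega_f)$ for a fixed smooth $(d-1)$-form $\omega_f$, built from $F(-\pi_1,p)$ wedged with the pullback of the sphere's volume form by $\pi_1$. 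Since $\omega_f$ is fixed and smooth, $N_{X_n}(\omega_f)\to T(\omega_f)$, and the same formula identifies $T(\omega_f)$ with $\int f\,\phi$.

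The delicate points in this step are two. First, one must check signs and the exact form of the slice identity. Second, one needs $\phi\in L^1_{\mathrm{loc}}$. This follows from the mass bound: slices of an integral current have masses that are integrable in the slicing parameter, so $\phi$ is a well-defined locally integrable function. With both in hand, the distributional limits of $\phi_n$ and $\psi_n$ coincide, so $\phi=\psi$ almost everywhere, and Fu's uniqueness theorem concludes.
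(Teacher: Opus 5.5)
Your proof is correct. Its skeleton matches the paper's: compactness with a common compact support, the three axioms for the limit $T$, Fu's uniqueness, and the sub-subsequence principle, with \Cref{lem:L1_conv} as the analytic input.

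The difference is in how the Euler characteristic axiom passes to the limit. The paper argues pointwise. It extracts a subsequence along which $\psi_n \to \psi$ almost everywhere. It then invokes a measure-theoretic fact (Prop.~1.66 of \cite{RatajZahle}) giving $p_{\#}\tranche{N_{X_n}}{\pi_1}{-\nu}(\mathbf{1}_{(-\infty,t]}) \to p_{\#}\tranche{T}{\pi_1}{-\nu}(\mathbf{1}_{(-\infty,t]})$ for almost every $(\nu,t)$. This is short, but it hides two things: a flat-norm slicing estimate, which in general gives a.e.\ convergence only along a further subsequence, and the evaluation of $0$-currents on a discontinuous function.

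You argue weakly instead. Exchanging the $t$-integral with the atomic slices turns $\int f\phi_n$ into $N_{X_n}(\omega_f)$ for one fixed smooth form. You then need only the definition of weak convergence, the slicing identity, and uniqueness of distributional limits, with no a.e.-convergent subsequences. The price is the two checks you flag:
\begin{itemize}
\item the orientation and sign conventions in the slicing identity;
\item $\phi\in L^1_{\mathrm{loc}}$, which follows from $\int_{\Sphere^{d-1}} M(\tranche{T}{\pi_1}{v})\,dv \lesssim M(T)$ plus joint measurability (e.g.\ via right-continuity in $t$).
\end{itemize}

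You also make explicit two points the paper leaves implicit. First, the supports are uniformly compact, because acyclic or homotopy closeness implies Hausdorff closeness. Second, the cycle, Legendrian and support conditions are stable under flat convergence.
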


\begin{proof}
    We follow the classical method first developed by Fu in \cite{FuSub}. Thanks to the compactness theorem on currents, we can extract a subsequence $X_{\phi(n)}$ such that $N_{X_{\phi(n)}}$ converges to a limit $T$. Recall that normal cycles $N_{X_n}$ are Legendrian cycles characterized by the fact that, for almost all $(\nu, t) \in \Sphere^{d-1} \times \R$, 
    \begin{equation*}
        p_{\#} \langle N_{X_n}, \pi_1, - \nu \rangle (\mathbf{1}_{(-\infty, t]}) = \chi(X_n \cap H_{\nu,t}).
    \end{equation*}
    Since $\psi_{\phi(n)} - \psi$ converges to $0$ in $L^1$ by \Cref{lem:L1_conv}, we can extract a subsequence $\psi_{\phi(\phi'(n))}$ converging almost everywhere to $\psi$, that is,
    \begin{equation*}
    \label{eq:euler_limite}
        \lim_{n \to \infty} \chi(X_{\phi(\phi'(n))} \cap H_{\nu,t}) =  \chi( X \cap H_{\nu, t}) \; \mathrm{a.e.}
    \end{equation*}
    Now since $N_{X_{\phi(n)}}$ converges in flat norm to $T$, for almost all $(\nu, t)$ in $\Sphere^{d-1} \times \R$ by a classical fact of measure theory (see for instance Prop 1.66 in \cite{RatajZahle}) we have
    \begin{equation*}
    \label{eq:courant_limite}
        \lim_{n \to \infty} p_{\#} \langle N_{X_{\phi(n)}}, \pi_1, - \nu \rangle (\mathbf{1}_{(-\infty, t]}) = p_{\#} \langle T, \pi_1, - \nu \rangle (\mathbf{1}_{(-\infty, t]}).
    \end{equation*}

    Combining the three previous equations we have on a set of full measure in $\Sphere^{d-1} \times \R$
    \begin{equation*}
        p_{\#} \langle T, \pi_1, - \nu \rangle (\mathbf{1}_{(-\infty, t]}) = \chi( X \cap H_{\nu,t}).
    \end{equation*}
    $T$ being a Legendrian current without boundary, Fu's uniqueness result implies that $T$ is indeed the normal cycle of $X$.
    Moreover, the arguments in this proof apply for every subsequence of $X_n$, 
    implying that the only limit point of the sequence $N_{X_n}$ is $N_X$. The result follows from the compactness theorem.
\end{proof}

\begin{remark}[Extension to nonsmooth sets]
    \label{rm:domains}
    \Cref{th:MR} holds if we only assume that $X_n$ has a positive reach or is a Lipschitz domain whose complement has positive reach. Indeed, in these cases almost all height functions $h_{\nu}|_{X_n}$ are Morse, by \cite{MorseReach} for sets of positive reach and by \cite{GeneralizedMorseTheory} for the second case, so the line of reasoning starting in \Cref{lem:crit_mass} still holds.
\end{remark}

\section{Approximable inverse flows of finite length and definable sets} 

In this section, we apply our main theorem to prove that any compact set $X$ definable in an o-minimal structure admits a normal cycle which is the limit of the normal cycles of its offsets $X^{\eps} \coloneqq \{ x \in \R^d, d_X(x) \leq \eps \}$ where $d_X$ denotes the distance function to $X$. More generally, we prove that the 0-sublevel set of a locally Lipschitz function $f$ admits a normal cycle to which the normal cycles of $\eps$-sublevel sets converge in the flat norm when $\eps$ goes to zero, under certain conditions on $f$ that are satisfied by distance functions.\\

Our idea is the following. When $f$ is $C^1$, one can consider the normalized inverse flow of $f$ consisting of trajectories of the differential equation $x'(t) = -\nabla f(x(t))/\norme{\nabla f(x(t))}$ (provided the gradient does not vanish). Any such trajectory is $1$-Lipschitz and makes $f$ decrease at speed $\norme{\nabla f(x(t))}$; it is either  a path of infinite length or it reaches a critical point of $f$. Just as in the traditional proof in Morse theory, assuming $a < b$ in $\R$ are such that $(a,b]$ does not contain critical values of $f$, if trajectories are of uniformly bounded length, the normalized inverse flow yields a homotopy-preserving map between sublevel sets $f^{-1}(-\infty, b]$ and $f^{-1}(-\infty, a]$. We build on this idea to construct homotopy equivalences with trajectories of bounded lengths between $f^{-1}(-\infty, \eps]$ and $f^{-1}(-\infty, 0]$, under assumptions on the \textit{Clarke gradient} of $f$, a classical object in Lipschitz analysis generalizing the gradients of $C^1$ functions.

\begin{definition}[Clarke gradient]
Let $f : \R^d \to \R$ be a locally Lipschitz map. The \textit{Clarke gradient} of $f$ at $x \in \R^d$, denoted by $\clarke f(x)$, is the convex hull of gradients of $f$ near $x$:
\begin{equation*}
\clarke f(x) = \Conv \left ( \{ \lim_{i} \nabla f(x_i) \tq  x_i \to x \} \right ).
\end{equation*} 
\end{definition}

The critical points of a locally Lipschitz map $f : \R^d \to \R$ are the points $x$ such that $0 \in \clarke f(x)$. To measure how far a point is to be critical, we define the distance to 0 of the Clarke gradient as the norm of its element closest to zero:
\begin{equation*}
    \Delta ( \clarke f(x) ) \coloneqq \inf \{ \norme{u}, u \in \clarke f(x) \}. 
\end{equation*}

A \textit{weakly regular value} of $f$ is a real $c$ such that there exist $\eps, \mu > 0$ such that for every point $x \in f^{-1}(c, c+\eps]$, we have $\Delta( \clarke f(x)) \geq \mu$.\\

In the one dimensional case, flows of finite length are easily characterized. 

\begin{definition}[Inverse flow of finite length]
\label{def:finite_length}
    Let $\phi : (0,K) \to (0, + \infty)$ be a locally Lipschitz map  for some possibly infinite positive $K$.
    For $C$ in $(0,K)$, let $T(C)$ be the length of the maximal solution satisfying $f(0) = C$ of the differential equation
        \begin{equation*}
        f'(t) = -\phi(f(t)),        
    \end{equation*}
    We say that $\phi$ has
    inverse flow of finite length when there exists $C$ such that $T(C)$ is finite.
\end{definition}

    The above definition is equivalent to asking that $1/\phi$ is integrable at 0. Indeed, the inverse $g = f^{-1} : (0, C] \to [0, T(C))$ of $f$ has derivative $g'(t) = -1/\phi(t)$, so that
    \begin{equation*}
        \int_{0}^C \frac{1}{\phi(t)} \diff t = T(C).
    \end{equation*}

Following ideas from \cite{Kurdyka} developed in the setting of $C^1$ definable functions, 
we will show that the following condition on the distance to 0 of Clarke gradients, already appearing in \cite{nonsmooth-loja}, suffices in building retraction with trajectories of bounded length.

\begin{definition}[]
\label{def:approx_finite_length}
    We say that a locally Lipschitz function $g : \R^d \to \R$ satisfies a nonsmooth Kurdyka-Łojasiewicz inequality when there exists $\phi$ with inverse flow of finite length and a positive number $c$ such that, on $g^{-1}(0, c]$, we have
    \begin{equation*}
         \Delta(\clarke g(x)) \geq \phi(g(x)).
    \end{equation*}

\end{definition}

	If $\Delta(\clarke g(x)) \geq \mu$ for some positive constant $\mu$ on a set of the form $g^{-1}(0, \eps]$, then clearly $g$ satisfies such an inequality, implying that every Lipschitz map with 0 as a weakly regular value also does. It was proved in \cite[Theorem 11]{nonsmooth-loja} that any locally Lipschitz function definable in any o-minimal structure satisfies such an inequality. The classical Łojasiewicz inequality is a particular case of Kurdyka-Łojasiewicz inequality where $\phi(t) = C t^{\theta}$ for some positive $C$ and $\theta$ in $(0,1)$. However some o-minimal structures contain definable maps that do not satisfy any classical Łojasiewicz inequalities.\\

    \begin{lemma}[Retractions of bounded length]
    \label{lem:retractions}
    Let $g$ be a function satisfying a nonsmooth Kurdyka-Łojasiewicz inequality, and let $\phi, c$ be as in \Cref{def:approx_finite_length}. Let $K = \int_{0}^c \frac{\diff s}{\phi(s)}$ and $K_{\eps} = \frac{1}{1-\eps} K$ for every $0 <\eps < 1$. Then there exists a continuous map $C_{\eps} : [0,K_{\eps}] \times g^{-1}(-\infty,c] \to g^{-1}(-\infty, c]$ such that :
    \begin{itemize}
        \item $C_{\eps}$ is $C^{\infty}$ on $\left \{ (t,x), g(C_{\eps}(t,x)) > 0 \right \}$, and on this set 
        \begin{equation*}
            \frac{\partial (g \circ C_{\eps}) }{\partial t}(t,x) \leq -(1 - \eps) \phi(g(C_{\eps}(t,x)));
        \end{equation*}
        \item $C_{\eps}(0,x) = x$ for every $x$ in $g^{-1}(-\infty,c]$;
        \item $C_{\eps}(t,x) = x$ for every $t \in [0,K_{\eps}]$ and $x \in g^{-1}(-\infty, 0]$;
        \item $C_{\eps}(K_{\eps},x) \in g^{-1}(-\infty, 0]$ for every $x$ in $g^{-1}(-\infty,c]$;
        \item For every $x$ in $g^{-1}(-\infty,c]$, the trajectory $[0,K_{\eps}] \ni t \mapsto C_{\eps}(t,x)$ is 1-Lipschitz.
    \end{itemize}
    \end{lemma}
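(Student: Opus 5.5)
We build $C_\eps$ as the flow of a smooth pseudo-gradient vector field for $g$ on the open set $U = g^{-1}(0,c]$ (more precisely on a neighbourhood of it where the inequality holds). The flow is then extended by the identity on $g^{-1}(-\infty,0]$, and trajectories are frozen once they reach the zero level.

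\textbf{Step 1: a local descent direction.} Fix $x$ with $0<g(x)\le c$. Let $u_x$ be the element of minimal norm of the compact convex set $\clarke g(x)$, so that $\norme{u_x} = \Delta(\clarke g(x)) \ge \phi(g(x))>0$. Since $u_x$ is the projection of $0$ onto $\clarke g(x)$, we have $\scal{u_x}{w} \ge \norme{u_x}^2$ for all $w\in\clarke g(x)$. Put $v_x = u_x/\norme{u_x}$. By upper semicontinuity of the Clarke gradient, there is a neighbourhood $V_x$ on which every $w\in\clarke g(y)$ satisfies $\scal{v_x}{w} \ge (1-\eps/2)\norme{u_x}$. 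Shrinking $V_x$ and using continuity of $g$ and $\phi$, we can also ensure $(1-\eps/2)\norme{u_x} \ge (1-\eps)\phi(g(y))$ on $V_x$. The Lebourg mean value theorem then gives, along any segment in $V_x$, $g(y - s v_x) - g(y) \le -(1-\eps)\phi(g(\cdot))\,s$ in the differential sense.

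\textbf{Step 2: gluing and integrating.} Take a smooth partition of unity $(\rho_j)$ subordinate to the cover $(V_{x_j})$ of $U$, locally finite in $U$, and set $V = -\sum_j \rho_j v_{x_j}$. This field is smooth on $U$ and satisfies $\norme{V}\le 1$. Because the half-space condition $\scal{\cdot}{w}\ge(1-\eps)\phi(g(y))$ is convex in the direction, it survives convex combinations, even though $\norme{V}$ may be smaller than $1$. Along any solution $\gamma$ of $\gamma'=V(\gamma)$, the function $g\circ\gamma$ is Lipschitz, and the chain rule for Clarke gradients gives a derivative at most $-(1-\eps)\phi(g(\gamma))$. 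A smooth chain rule is not directly available here, so we rely on the inclusion $(g\circ\gamma)'(t)\in \scal{\clarke g(\gamma(t))}{\gamma'(t)}$ a.e. Comparison with the ODE $f'=-(1-\eps)\phi(f)$ then shows that $g(\gamma(t))$ reaches $0$ before time $\int_0^{c}\frac{\diff s}{(1-\eps)\phi(s)} = K_\eps$.

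\textbf{Step 3: extension.} Define $C_\eps(t,x)$ as the solution started at $x$ while $g>0$, and keep it constant once it hits $g^{-1}(0)$ (at the first hitting time $\tau(x)\le K_\eps$). Set $C_\eps(t,x)=x$ for $g(x)\le 0$. The trajectories are $1$-Lipschitz because $\norme{V}\le1$. Sublevel sets of $g$ are preserved since $g$ decreases. Smoothness on $\{g\circ C_\eps>0\}$ follows from smooth dependence of ODE flows on initial data.

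\textbf{Main obstacle: continuity of $C_\eps$ at the zero level.} Near points where the trajectory has just arrived in $g^{-1}(0)$, or near $x$ with $g(x)\le0$, the vector field is not defined and the hitting time $\tau$ need not be continuous. We would control this with the length bound. A trajectory starting at $y$ with $g(y)=\delta$ small has length at most $\frac{1}{1-\eps}\int_0^\delta \frac{\diff s}{\phi(s)}$, which tends to $0$ with $\delta$ by integrability of $1/\phi$. So every point stays within that distance of its start after reaching low values of $g$. Combined with continuous dependence of the flow on compact time intervals inside $U$, this gives continuity everywhere.
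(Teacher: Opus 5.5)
Your proposal is correct and follows essentially the same route as the paper. Both arguments take the minimal-norm element of $\clarke g(x)$ together with upper semicontinuity to get local descent directions, build a partition-of-unity pseudo-gradient field whose defining inequality survives convex combinations, and bound $(g\circ\gamma)'$ almost everywhere. You get that bound from Clarke's chain rule, while the paper bounds the directional derivative by $\max_{u\in\clarke g(x)}\scal{u}{V(x)}$; these amount to the same thing. Both then compare with $f'=-(1-\eps)\phi(f)$ and freeze trajectories at the zero level. The one place you go further is continuity of $C_\eps$: you sketch it via the length bound $\frac{1}{1-\eps}\int_0^{\delta}\frac{\diff s}{\phi(s)}\to 0$ combined with continuous dependence on compact time intervals, whereas the paper simply defers this step to its generalized Morse theory reference.
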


    \begin{proof}
    We proceed as in the proof of \cite[Proposition 2.9]{GeneralizedMorseTheory}, adapting for the case where $\liminf_{s \to 0} \phi(s) = 0$. Let $\eps > 0$. For every $x \in g^{-1}(0, c]$, by convexity of $\clarke g(x)$, its element of smallest norm $W(x)$ satisfies  
    \begin{equation}
    \label{eq:hyper_support_clarke}
    \forall \, u \in \clarke g(x), \scal{u}{W(x)} \geq \norme{W(x)}^2 = \Delta( \clarke g(x))^2.    
    \end{equation}

    Since $\phi(g(x))$ is positive, by the upper semicontinuity of the Clarke gradient, there exists an open set $U_x \subset g^{-1}(0, + \infty)$ such that $\forall y \in U_x, \clarke g(y)$ is a subset of the $\phi(g(x)) \eps/2$-offset of $\clarke g(x)$.
    Since $y \mapsto \phi(g(y))$ is also continuous, there exists an open set $V_x \subset g^{-1}(0, + \infty)$ such that  
    \[ \forall y \in V_x, \; \phi(g(y)) \leq \frac{1 - \eps/2}{1 - \eps} \phi(g(x)). \]
    Letting $B_x = U_x \cap V_x$, we have for every $y \in B_x$ and every $u \in \clarke g(y)$
    \begin{equation}
        \label{eq:inegalite_clarke}
        \begin{aligned}
        \scal{u}{\frac{W(x)}{\norme{W(x)}}} \geq & \; \Delta( \clarke g(x)) - \frac{\eps}{2} \phi(g(x)) 
        \geq & \; (1 -\eps/2) \phi(g(x)) \\
        \geq & \; (1 - \eps)\phi(g(y)).
        \end{aligned}
    \end{equation}
    The set $B = \cup_{x \in g^{-1}(0,c]} B_x$ is open and contains $g^{-1}(0,c]$. As an open set of Euclidean space it is a paracompact, so that we can extract a countable, locally finite family $(B_{x_i})_{i \in I}$ with an associated locally finite $C^{\infty}$ partition of unity $(\rho_i)_{i \in I}$. Let $V$  be a smooth interpolation of the normalized $-W(x_i)$:
    \begin{equation*}
        V(x) = - \sum_{i \in I} \rho_i(x) \frac{W(x_i)}{\norme{W(x_i)}}.
    \end{equation*}
    In particular, for any $y \in g^{-1}(0, c]$, the nonzero contributions in $V(y)$ come from points $x_i$ such that $y \in B_{x_i}$. From \Cref{eq:inegalite_clarke} it then follows that for any $u \in \clarke g(y)$, we have 
    \begin{equation}
    \begin{aligned}
    \label{eq:inegalite_V}
    \scal{u}{V(y)} \leq -(1 - \eps) \phi(g(y)).
    \end{aligned}
    \end{equation}
        
    Let $C_V$ be the flow of maximal domain satisfying $\frac{\partial}{\partial t} C_{V}(t,x) = V(C_{V}(t,x))$.
    Since by construction $\norme{V}_{\infty} \leq 1$, for any $x_0$ in $g^{-1}(0, c]$ the trajectory $x(t) : t \mapsto C_{V}(t,x)$ is 1-Lipschitz. Moreover this trajectory is defined in an interval of the form $[0, T(x_0))$ where $T(x_0)$ belongs to $(0, + \infty ]$. Now let $\psi(t) = g(x(t))$, so that $\psi$ is a locally Lipschitz function $[0, T(x_0)) \to \R$. As such, it is differentiable almost everywhere in this interval. Take $s$ a point of differentiability of $\psi$. By definition, we have $\psi(s + h) = \psi(s) + h\psi'(s) + o(h)$, and by construction
    \begin{equation*}
        \begin{aligned}
            \psi(s + h) = & \, g(x(s+h)) \\
            = & \, g(x(s) + h V(x(s))) + o(h). 
        \end{aligned}
    \end{equation*}
    As a consequence, $g$ admits a directional derivative at $x(s)$ in direction $V(x(s))$ that is equal to $\psi'(s)$. By the characterization of the Clarke gradient found in (Proposition 1.4, \cite{Clarke1975GeneralizedGA}), we have
    \begin{equation*}
    \psi'(s) \leq \max \{ \scal{u}{V(x(s))}, u \in \clarke g(x(s)) \}.
    \end{equation*}
    By \Cref{eq:inegalite_V}, we get for almost every $t \in (0, T(x_0))$
    \begin{equation}
    \label{eq:inegalite_diff}
        \frac{ \partial g(x(t))}{\partial t} \leq -(1 - \eps) \phi(g(x(t)). 
    \end{equation}
    Note that this implies that $g(x(t))$ is decreasing in $t$, so that the restriction of $C_{\eps}$ to points of $g^{-1}(0, c]$ has trajectories in $g^{-1}(0,c]$, and so that 
    \begin{equation}
    \label{eq:Tx_0}
    T(x_0) = \sup \{ t \in \R_{\geq 0}, g(x(t)) > 0 \}.
    \end{equation} 
    
    Since $\phi$ is locally Lipschitz, we can consider the solution $h$ of the first order differential equation $h'(t) = (1-\eps)\phi(h(t))$ with starting point $g(x_0)$, with maximal domain $[0, T_h(x_0))$. Since $(1 -\eps)\phi$ has inverse flow of finite length, $T_h(x_0) = (1 - \eps)^{-1} \int_{0}^{g(x_0)} \phi(t)^{-1} \diff t$. Comparing the differential inequality of \Cref{eq:inegalite_diff} with the differential equation satisfied by $h$ yields $g(x(t)) \leq h(t)$ for every $0 < t < \min( T(x_0), T_h(x_0))$. By \Cref{eq:Tx_0} and the similar characterization for $T_h(x_0)$, we have 
    \[ T(x_0) \leq  T_h(x_0) = \frac{1}{(1 - \eps)} \int_{0}^{g(x_0)} \frac{1}{\phi(t)} \diff t \leq K_{\eps}. \] 
    
    Furthermore, as the curve $[0, T(x_0)) \ni t \mapsto x(t)$ is 1-Lipschitz by completeness it has an endpoint $C_V(T(x_0),x_0)$ which verifies $g(x(T(x_0))) = 0$. 
    Let $C_{\eps}$ be defined on $[0, K_{\eps}] \times g^{-1}(-\infty, c]$ by
    \begin{equation*}
        C_{\eps}(t,x) = \left \{
    \begin{array}{l l}
         C_{V}(\min(t, T(x)), x) & \text{if } x \in g^{-1}(0,c]  \\
         x & \text{else.} 
    \end{array}
        \right.
    \end{equation*}
    The only remaining property to be shown for $C_{\eps}$ is its continuity, which can be derived exactly as in the end of the proof of \cite[Proposition 2.9]{GeneralizedMorseTheory}. 
    \end{proof}
    
    \begin{theorem}[$C^0$ convergence of sublevel sets]
    \label{th:approx_acy_convergence}
        Let $g$ be a function satisfying a Kurdyka-Łojasiewicz  inequality, and let $c > 0$, $\phi$ be as in \Cref{def:approx_finite_length}. 
        Then letting $X = g^{-1}(-\infty, 0]$ and $X_t = g^{-1}(-\infty,t]$, we have for $0 < t \leq c$:
        \begin{equation*}
        \label{acy_bound}
        \max \left (\dacy(X,X_t), \dhom(X,X_t) \right) \leq \int_{0}^t \frac{\diff s }{\phi(s)}.
        \end{equation*}
    \end{theorem}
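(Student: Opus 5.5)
The plan is to build both comparison structures from the retraction of \Cref{lem:retractions}, applied with the level $t$ in place of $c$. For $0<t\leq c$ the pair $(\phi|_{(0,t)}, t)$ still satisfies \Cref{def:approx_finite_length}, since $g^{-1}(0,t]\subset g^{-1}(0,c]$. So for every $\eta\in(0,1)$ the lemma gives a map $C_\eta:[0,K_\eta]\times X_t\to X_t$ with $K_\eta=(1-\eta)^{-1}\int_0^t \diff s/\phi(s)$. Its properties are: $C_\eta(0,\cdot)=\id$; it fixes $X$ pointwise; it sends everything into $X$ at time $K_\eta$; and it has $1$-Lipschitz trajectories. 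In particular every point of $X_t$ moves by at most $K_\eta$ along its trajectory. Since it suffices to prove each bound with $K_\eta$ and then let $\eta\to0$, I write $L=\int_0^t \diff s/\phi(s)$ and aim for the bound $K_\eta$.

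\emph{Homotopy distance.} I take $f=r\coloneqq C_\eta(K_\eta,\cdot):X_t\to X$ and $g=\iota:X\hookrightarrow X_t$ the inclusion. Then $\norme{r(x)-x}\leq K_\eta$ and $\norme{\iota(x)-x}=0$. The composite $r\circ\iota=\id_X$, because $C_\eta$ fixes $X$, so the constant homotopy $H_X(s,x)=x$ works. For the other composite, $H_{X_t}(s,y)=C_\eta(sK_\eta,y)$ is a homotopy from $\id_{X_t}$ to $\iota\circ r$. It stays in $X_t$ because trajectories have $g$ nonincreasing, and its trajectories move points by at most $K_\eta\leq 2K_\eta$. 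Hence $\dhom(X,X_t)\leq K_\eta$.

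\emph{Acyclic relation.} I would like to use the retraction $r$ itself as the relation, i.e.\ its graph $\{(y,r(y))\}\subset X_t\times X$. The graph is closed because $r$ is continuous on the compact set $X_t$; if compactness is not available I would restrict to the relevant compacta. Related points are at distance at most $K_\eta$. Each $y\in X_t$ is related to the single point $r(y)$, which is acyclic.

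The main obstacle is the other direction of the relation. For $x\in X$ the set related to it is the fibre $r^{-1}(x)$, which consists of $x$ together with all points of $g^{-1}(0,t]$ whose flow lines end at $x$. There is no a priori reason for this fibre to have the homology of a point. Two fixes are possible:
\begin{itemize}
\item Show that $r^{-1}(x)$ deformation retracts to $x$ by flowing with $C_\eta$. This needs the flow to preserve the fibres, i.e.\ $r(C_\eta(s,y))=r(y)$. That property does hold for the stopped flow $C_V(\min(s,T(y)),y)$, by the semigroup property of the autonomous flow and the stopping at $g=0$. Then $s\mapsto C_\eta(sK_\eta,\cdot)$ is a homotopy inside $r^{-1}(x)$ from the identity to the constant map $x$, so the fibre is contractible, hence acyclic in Čech homology for a compact fibre.
\item If fibre preservation fails at the stopping time, relate $y$ to the whole trajectory endpoint set instead.
\end{itemize}
I expect verifying the semigroup and fibre-preservation property of the stopped flow, including continuity at points where $T(y)$ is reached, to be the delicate point. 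Once it holds, $\dacy(X,X_t)\leq K_\eta$, and letting $\eta\to0$ gives the stated bound for both quantities.
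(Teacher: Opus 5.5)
Your proposal is correct and follows the paper's proof: the homotopy bound comes from pairing the inclusion $X \hookrightarrow X_t$ with the time-$K_\eta$ map of the retraction of \Cref{lem:retractions} (applied at level $t$), and the acyclic relation is the graph of that endpoint map, whose fibres are contracted onto their base points by the flow itself, after which one lets $\eta \to 0$. Your check of fibre preservation via the semigroup property of the stopped flow fills in a step the paper only asserts. Your hedges are not needed: the graph of a continuous map between closed sets is closed without any compactness, and each fibre $r^{-1}(x)$ is closed and lies in the ball of radius $K_\eta$ about $x$, hence compact, so its contractibility gives it the Čech homology of a point.
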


    \begin{proof}
        For any $0 < t \leq c$, $g$ satisfies the Kurdyka-Łojasiewicz  inequality for the pair $\phi, t$, so that for every $0 <\eps < 1$
        \Cref{lem:retractions} provides a map $C^t_{\eps} : [0, K_{\eps}(t)] \times X_t \to X_t$ where $K_{\eps}(t) = (1 - \eps)^{-1} \int_{0}^t \phi(s)^{-1} \diff s.$ The map $C^t_{\eps}$ is a deformation retraction between $X$ and $X_t$, and since its trajectories $s \mapsto C^t_{\eps}$ are 1-Lipschitz, it is a $K_{\eps}(t)$-homotopy equivalence. Thus we have
        \begin{equation*}
            \dhom(X, X_t) \leq \frac{1}{1-\eps} \int_{0}^t \frac{1}{\phi(s)} \diff s.
        \end{equation*}
        
        Moreover, let $F : X_t \to X : x \mapsto C_{\eps}(K_{\eps}(t),x)$. This map is acyclic, as for each $x \in X$ the fiber $F^{-1}(x)$ can be retracted to $x$ by the map $C^{t}_{\eps}$. Thus the relation on $X \times X_t$ where $x \in X$ is in relation with $y \in X_t$ when $F(y) = x$ is acyclic. Moreover, the trajectories of $C^t_{\eps}$ are 1-Lipschitz, so that each $y \in F^{-1}(x)$ is at distance at most $K_{\eps}(t)$ to $x$, so that this relation is a $K_{\eps}(t)$-acyclic relation between $X$ and $X_t$. Thus we also have
        \begin{equation*}
        \dacy(X, X_t) \leq \frac{1}{1-\eps} \int_{0}^t \frac{1}{\phi(s)} \diff s.
        \end{equation*}
        Letting $\eps$ go to zero in the two previous inequalities yields the desired bounds.
 \end{proof}

Recall that a function $g$ is said to be semiconcave when there exists $K \geq 0$ such that $x \mapsto g(x) - K \norme{x}^2$ is concave, and that $g$ is semiconvex when $-g$ is semiconcave. From Bangert \cite{Bangert}, if $t$ is a regular value of $g$, $X_t$ is a domain with positive reach (if $g$ is semiconvex) or a Lipschitz domain whose complement set has positive reach (if $g$ is semiconcave). In the first case the normal cycle of $X_t$ is given by the Lipschitz manifold of pairs $(x,v)$ where $v$ is a unit vector in the normal cone of $X_t$ at $x$, and in the second case it is obtained from the complement's normal cycle by flipping the normals, see \cite[Example 9.10]{RatajZahle}.  By \Cref{rm:domains} our main result \Cref{th:MR} yields the following.

    \begin{proposition}[Normal cycles of sublevel sets]
    \label{th:normal_cycle_flow}
        Let $g$ be a semiconcave function or semiconvex function satisfying a Kurdyka-Łojasiewicz  inequality and let $c, \phi$ be as in \Cref{def:approx_finite_length}.
        For $0 < \eps < c$, let $N_{\eps}$ be the normal cycle of $X_{\eps}$. If
    \begin{equation*}
        \limsup_{\eps \to 0} M(N_{\eps}) < \infty,
    \end{equation*}
    then $X$ admits a normal cycle $N_X$ and $N_{\eps}$ converges to $N_X$ in the flat norm as $\eps$ goes to zero.
    \end{proposition}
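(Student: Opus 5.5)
The plan is to apply \Cref{th:MR}, in the form extended by \Cref{rm:domains}, to a sequence $X_{\eps_n}$ with $\eps_n \to 0$, and then pass from sequences to the continuous parameter $\eps$. Fix an arbitrary sequence $\eps_n \to 0$ with $0 < \eps_n < c$. The first task is to check that each $X_{\eps_n}$ belongs to the class covered by \Cref{rm:domains}. On $g^{-1}(0,c]$ the Kurdyka-Łojasiewicz inequality gives $\Delta(\clarke g(x)) \geq \phi(g(x)) > 0$. Since $\phi$ is continuous and positive on $(0,c]$, it is bounded below by a positive constant on a neighbourhood of $\eps_n$. Hence $\eps_n$ is a regular value of $g$ in the Clarke sense. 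By Bangert's result recalled above, $X_{\eps_n}$ is then a domain with positive reach if $g$ is semiconvex, or a Lipschitz domain whose complement has positive reach if $g$ is semiconcave. In either case its normal cycle $N_{\eps_n}$ exists.

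Next I will check the convergence hypothesis. By \Cref{th:approx_acy_convergence},
\[
\max\left(\dacy(X,X_{\eps_n}), \dhom(X,X_{\eps_n})\right) \leq \int_0^{\eps_n} \frac{\diff s}{\phi(s)},
\]
and the right-hand side tends to $0$ because $1/\phi$ is integrable at $0$, $\phi$ having inverse flow of finite length. So $X_{\eps_n}$ converges to $X$ both acyclically and in homotopy distance. For compactness, $X_{\eps_n} \subset X_c$, and I will assume, or argue, that $X_c$ is compact, as is implicit in the setting (for example when $g$ is a distance function). The limsup hypothesis gives $M(N_{\eps_n}) \leq M$ for all $n$ large. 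Discarding finitely many terms, \Cref{th:MR} with \Cref{rm:domains} then shows that $X$ admits a normal cycle $N_X$ and that $N_{\eps_n} \to N_X$ in flat norm.

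Finally, $N_X$ is unique by \Cref{th:uniqueness}, so it does not depend on the chosen sequence. Every sequence $\eps_n \to 0$ therefore gives $F(N_{\eps_n} - N_X) \to 0$, which is exactly flat convergence as $\eps \to 0$.

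The main obstacle is the regularity step. \Cref{rm:domains} needs almost every height function $h_\nu|_{X_{\eps_n}}$ to be Morse in the sense of \cite{MorseReach} or \cite{GeneralizedMorseTheory}, together with the normal cycle mass bound of \Cref{lem:crit_mass}. This relies on $X_{\eps_n}$ genuinely having positive reach, or complement of positive reach, which requires Bangert's theorem applied with Clarke-regular values. I would verify that the positive lower bound on $\Delta(\clarke g)$ near level $\eps_n$ is exactly the hypothesis used in \cite{Bangert}. I would also check that the coarea argument of \Cref{lem:crit_mass} carries over to the Lipschitz normal bundle of such sets, where the Jacobian of the projection to the sphere is still at most one.
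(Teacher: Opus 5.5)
Your proposal is correct and matches the paper's argument, which combines Bangert's regularity result for $X_{\eps}$, the bound of \Cref{th:approx_acy_convergence}, and \Cref{th:MR} as extended by \Cref{rm:domains}. The paper leaves two points unstated that you handle correctly: passing from sequences $\eps_n \to 0$ to the continuous limit via uniqueness of $N_X$, and the implicit compactness of $X_c$.
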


    \begin{corollary}
    Any closed subset $X$ of $\R^d$ definable in an o-minimal structure admits a normal cycle. 
    \end{corollary}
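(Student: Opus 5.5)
We apply \Cref{th:normal_cycle_flow} to the function $g = d_X$, the distance function to $X$. Since $X$ is closed we have $X = d_X^{-1}(-\infty,0]$, and its offsets are $X^{\eps} = d_X^{-1}(-\infty,\eps]$. Before starting, we reduce to the compact case: normal cycles are local, so it is enough to treat $X \cap \bar B(0,R)$ for large $R$, which is again definable and compact. Alternatively, the corollary could be read with $X$ compact, as the section introduction suggests.

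\textbf{Checking the hypotheses of \Cref{th:normal_cycle_flow}.}

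\emph{Semiconcavity.} The map $x \mapsto \norme{x}^2 - d_X(x)^2$ is convex, being a supremum of affine functions. However, $d_X$ itself is only semiconcave away from $X$. To handle this, we work with $g = d_X$ on $\{d_X > 0\}$; what is actually used in \Cref{rm:domains} and Bangert's result is only the local structure near the level sets $\{d_X = \eps\}$ with $\eps>0$. Near these level sets $d_X$ is semiconcave, which is classical. So for regular values $\eps$, the offset $X^{\eps}$ is a Lipschitz domain whose complement has positive reach.

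\emph{Nonsmooth Kurdyka-Łojasiewicz inequality.} Since $X$ is definable, $d_X$ is a definable Lipschitz function. By \cite[Theorem 11]{nonsmooth-loja}, it therefore satisfies a nonsmooth Kurdyka-Łojasiewicz inequality on some $d_X^{-1}(0,c]$.

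\emph{Regular values.} By definable Sard for Clarke critical values, only finitely many $\eps$ in $(0,c)$ are critical. Discarding these, $X^{\eps}$ has a normal cycle $N_{\eps}$ for small $\eps$ off a finite set. Because \Cref{th:approx_acy_convergence} and \Cref{th:MR} apply along any sequence $\eps_n \to 0$, avoiding this finite set costs nothing.

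\textbf{The main obstacle: a uniform mass bound.} It remains to show that $\limsup_{\eps\to0} M(N_{\eps}) < \infty$. We plan to bound the mass using a Crofton/coarea-type argument. The mass of the normal cycle of $X^{\eps}$ is controlled by integrals over $\nu \in \Sphere^{d-1}$ and over affine subspaces $E$ of the number of critical points of $h_\nu$ restricted to $X^{\eps} \cap E$. In the smooth-boundary case this is the reverse direction of \Cref{lem:crit_mass}, applied to all Lipschitz-Killing components.

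We then bound these counts using o-minimality. The family $\{(\eps,\nu,x) : x$ is a critical point of $h_\nu$ on $\partial X^{\eps}$, or on its slices$\}$ is definable. Therefore, whenever such counts are finite, they are bounded by a constant independent of $(\eps,\nu,E)$, by uniform finiteness in definable families.

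Two further points need care. First, the unit normal bundle of $X^{\eps}$ is a definable family of $(d-1)$-dimensional sets $\Nor(X^{\eps}) \subset \R^d\times\Sphere^{d-1}$. For definable families, the $\mathcal{H}^{d-1}$ measure is bounded uniformly in the parameter, by uniform bounds on the number of connected components of generic fibers of linear projections combined with Cauchy-Crofton. So $M(N_{\eps}) = \mathcal{H}^{d-1}(\Nor(X^{\eps}))$ is uniformly bounded as $\eps \to 0$. Second, we must check that the normal cycle of a set whose complement has positive reach is carried by this definable set with multiplicity one. Once this is in place, \Cref{th:normal_cycle_flow} yields the normal cycle $N_X$ as the flat limit of the $N_{\eps}$.
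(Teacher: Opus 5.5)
Your final argument is essentially the paper's proof. The paper applies \Cref{th:normal_cycle_flow} to $d_X^2$, which is globally semiconcave, so your localization of semiconcavity is unnecessary. It invokes \cite{nonsmooth-loja} for the Kurdyka--{\L}ojasiewicz inequality. It then bounds $\mathcal{H}^{d-1}$ of the definable family of normal bundles, identified via Clarke gradients of $d_X$, using Crofton's formula with affine $(d+1)$-flats in $\R^{2d}$ and a uniform o-minimal bound on the finite slices, exactly as in your ``first further point''. The critical-point-counting route you sketch first is not needed, and its comparison inequality is left unproven. The definable Sard step is also superfluous, since the Kurdyka--{\L}ojasiewicz inequality already excludes critical values in $(0,c]$.
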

    \begin{proof}
    The squared distance $d^2_X$ is semiconcave and we have $X = (d^{2}_X)^{-1}(-\infty, 0]$. Moreover since $X$ is definable in an o-minimal structure, $d^2_X$ is definable and must satisfy a Kurdyka-Łojasiewicz inequality by \cite{nonsmooth-loja}. To prove the result, it remains to show that the normal cycles of small offsets of $X$ have uniformly bounded mass. Let $t$ be such that $(0,t]$ does not contain a critical value of $d_X$.
    By \cite[Theorem 2.20]{GeneralizedMorseTheory}, the normal cones of the complement of $X^s$ are opposite to the cones spanned by the Clarke gradients of $d_X$, so that the definable set 
    \[ A = \left \{ (x, v/\norme{v}), 0 < d_X(x) \leq t, v \in \clarke d_X(x) \right \}, \]
    intersected with $d_X^{-1}(s) \times \R^d$ is the support $A_s$ of  $N_{X^s}$ for $0 < s \leq t$; moreover we have $M(N_{X_s}) = \mathcal{H}^{d-1}(A_s)$. Now let $G$ be the Grassmannian of affine $(d+1)$-flats in $\R^{2d}$. Consider the definable set
    \[ B = \{ (x, v, E), (x,v) \text{ belongs to } E, (x,v,E) \in A \times G \}. \]
   Since the map $\phi : B \to G \times \R, (x, v, E) \mapsto (d_X(x), E)$ is also definable, its fibers admit a finite number of distinct homotopy types by \cite[Section 9]{TameTopology}; in particular there exists a number $M$ such that 
    \[ \forall \, s \, \in (0, t],\, \forall \, E \, \in G, \;\module{\chi(\phi^{-1}(s, E))} \leq M. \]
    Since $\phi^{-1}(s,E) = A_s \cap E$, by Crofton's Formula, there exists a constant $C$ such that for every $0 < s \leq t$, we have
    \begin{align*}
        \mathcal{H}^{d-1}(A_s)  = & \; C \int_{E \in G} \chi(A_s \cap E) \diff E \\
        \leq & \; C \int_{\substack{E \in G \\ E \cap A_s \neq \emptyset}} M \diff E.
    \end{align*} 
    The subset of $G$ consisting of subspaces $E$ with nonempty intersection with at least one of $(A_s)_{s \in (0,t]}$ has finite volume, therefore the quantity $\mathcal{H}^{d-1}(A_s)$ is uniformly bounded in $s$.

    \end{proof}

    \section{WDC sets}

    In this section, we prove that any WDC set is the limit of a sequence of smooth sets with normal cycles of uniformly bounded mass, both with respect to the acyclic convergence and the homotopy distance. We begin by recalling the definitions of d.c. functions and WDC sets.
    
    \begin{definition}[d.c functions and WDC sets]
     A real function $h$ defined on a convex set is called d.c (or \textit{delta-convex}) when it can be written as a difference of two convex functions. It is a \textit{d.c nondegenerate aura} if $h$ is nonnegative and $0$ is a weakly regular value of $h$.
  A compact set $X \subset \R^d$ is called WDC (\textit{weakly delta-convex}) 
     when it is the 0-sublevel set of a nondegenerate d.c aura.
    \end{definition}

We will use the theory of \textit{Monge-Ampère} functions, a class of real functions key to the existence of normal cycles for WDC-sets.

\begin{definition}[Monge-Ampère functions {\cite{MongeAmpere}}]
A locally Lipschitz map $f : \R^d \to \R$ is said to be \emph{Monge-Ampère} when there exists a (necessarily unique) integral $d$-current $[\diff f]$ on $\R^d \times \R^d$ satisfying the following:
\begin{itemize}
    \item $ \partial [\diff f] = 0$ ($[\diff f]$ is a cycle).
    \item $[\diff f]$ is Lagrangian, i.e.,
    \begin{equation*}
    [\diff f] \mres \omega = 0,
    \end{equation*} where $\omega = \sum_{i=1}^d \diff x_i \wedge \diff y_i$ is the canonical symplectic form over $\R^d \times \R^d$.
    \item For any $C^{\infty}$ map $\phi : \R^d \times \R^d \to \R$,  
    \begin{equation*}
    [\diff f](\phi \diff x_1 \wedge \dots \wedge \diff x_d) = \int_{\R^d} \phi(x, \nabla f(x)) \diff x.
    \end{equation*}
    \item  The projection on the first variable of $\R^d \times \R^d$ restricted to the support of $[\diff f]$ is proper.
\end{itemize}
\end{definition}

When $f$ is $C^{1,1}$, its Monge-Ampère current has an explicit representation. Recall that by Rademacher's theorem, the Hessian of $f$ is defined almost everywhere.

\begin{proposition}[Explicit Monge-Ampère currents for a $C^{1,1}$ function {\cite[2.4]{MongeAmpere}}]
\label{prop:MA_exact_mass}
When $f$ is $C^{1,1}$, the Monge-Ampère current is just the graph of its gradient. In particular for any compact rectifiable set $K$ of $\R^d$, we have
    \begin{equation*}
        M([\diff f] \mres K \times \R^d) = \int_{K} \det(I_d + (H_x f)^2)^{1/2} \diff x.
    \end{equation*}    
\end{proposition}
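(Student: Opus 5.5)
The plan is to use the fact that for a $C^{1,1}$ function the Monge-Ampère current is integration over the graph of the gradient $x \mapsto (x, \nabla f(x))$, which is a Lipschitz graph, and then compute the mass by the area formula.

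\textbf{Step 1: identify the current.} Since $\nabla f$ is Lipschitz, the map $G : \R^d \to \R^d \times \R^d$, $G(x) = (x, \nabla f(x))$, is Lipschitz and injective. The natural candidate is $T = G_*[\R^d]$, where $[\R^d]$ is the standard oriented $d$-current. We check that $T$ satisfies the defining properties.
\begin{itemize}
\item $T$ is locally integral, as the pushforward of an integral current by a Lipschitz map.
\item $\partial T = G_* \partial[\R^d] = 0$.
\item Pulling back, $G^*(\phi\, \diff x_1\wedge\dots\wedge \diff x_d) = \phi(x,\nabla f(x))\,\diff x$, which gives the normalization property.
\item The projection to the first factor is proper, since it inverts $G$ on the support.
\item $T$ is Lagrangian because $G^*\omega = \sum_i \diff x_i \wedge \diff(\partial_i f) = \sum_{i,j} \partial_{ij} f\, \diff x_i\wedge \diff x_j = 0$ almost everywhere, by symmetry of the Hessian.
\end{itemize}
The only delicate point is justifying the pullback formula for Lipschitz $G$. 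I would do this by approximating $f$ by mollifications $f_\delta$ (whose gradients converge locally uniformly, with Hessians converging a.e. and uniformly bounded) and passing to the limit via dominated convergence. This settles both the Lagrangian and boundary conditions, and uniqueness from \cite{MongeAmpere} then gives $[\diff f] = T$.

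\textbf{Step 2: compute the mass.} By the area formula for the injective Lipschitz map $G$, the restriction $T \mres (K\times\R^d)$ is integration, with multiplicity one, over the rectifiable set $G(K)$, with the orientation induced from $\R^d$. Its mass is therefore $\mathcal{H}^d(G(K)) = \int_K J_G(x)\,\diff x$, where $J_G = \det(DG^\top DG)^{1/2}$ is defined a.e. by Rademacher. Since $DG = \begin{pmatrix} I_d \\ H_x f\end{pmatrix}$ and the Hessian is symmetric, we get $DG^\top DG = I_d + (H_xf)^\top H_x f = I_d + (H_x f)^2$. This yields the stated formula.

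\textbf{Main obstacle.} I expect the main difficulty to be the rigorous justification of Step 1 at only $C^{1,1}$ regularity: that the pushforward's action on forms is computed by the a.e. derivative, and that the Lagrangian condition holds, given that the Hessian only exists almost everywhere. The mollification argument, combined with the flat continuity of pushforwards under uniformly Lipschitz maps converging uniformly, should handle it.
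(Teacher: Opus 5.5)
Your proposal is correct and is the standard argument; the paper gives no proof of its own and simply cites \cite[2.4]{MongeAmpere}. There, $[\diff f]$ for $C^{1,1}$ $f$ is identified (via uniqueness) with the pushforward of $[\R^d]$ under $x \mapsto (x, \nabla f(x))$, and the mass formula follows from the area formula with $DG^{\top}DG = I_d + (H_x f)^2$, exactly as you compute, with your mollification (or Federer's a.e.\ pullback formula for Lipschitz pushforwards) handling the Lagrangian and normalization conditions at $C^{1,1}$ regularity.
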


We will repeatedly use the following facts. 

\begin{lemma}[Facts about mollifiers]
\label{lem:convolution}
For a $L$-Lipschitz function $\psi$, denote by $\psi_s$ its convolution with the function $x \mapsto s^{-d}\xi(x/s)$ where $\xi:\R^d \to \R_{\geq 0}$ is a smooth function of integral one with support in the unit ball. Let $s, s'$ be two nonnegative numbers.
    \begin{itemize}
        \item For all $x \in \R^d$, $\module{\psi_s(x) - \psi_{s'}(x)} \leq L\module{s - s'}$.
        \item Assuming that for all compacta $K \subset \R^d$, $\limsup_{s \to 0^+} M( [ \diff \psi_s ] \mres K \times \R^d) < + \infty$, we have in the flat metric
        \begin{equation*}
        \label{eq:MA_continuity}
        \lim_{s \to 0^+} [ \diff \psi_s] = [ \diff \psi].
        \end{equation*}      
        \end{itemize}
\end{lemma}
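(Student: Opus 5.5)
The first bullet will follow directly from the convolution formula. Writing $\psi_s(x) = \int_{\R^d} \psi(x - s z)\,\xi(z) \diff z$ (the change of variables $y = sz$), we get for $s,s' \geq 0$ (with $\psi_0 = \psi$)
\begin{equation*}
\module{\psi_s(x) - \psi_{s'}(x)} \leq \int_{\R^d} \module{\psi(x-sz) - \psi(x-s'z)}\,\xi(z) \diff z \leq L\module{s-s'} \int_{\R^d} \norme{z}\,\xi(z)\diff z \leq L \module{s-s'},
\end{equation*}
since $\xi \geq 0$ has integral one and support in the unit ball.

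For the second bullet I would follow the same compactness-plus-uniqueness pattern as in the proof of \Cref{th:MR}. Fix a sequence $s_k \to 0^+$. Each $\psi_{s_k}$ is smooth, so $[\diff \psi_{s_k}]$ is the graph of $\nabla \psi_{s_k}$, which is a cycle, Lagrangian, and satisfies the integration identity. Since $\psi_{s_k}$ is $L$-Lipschitz, $\norme{\nabla \psi_{s_k}} \leq L$, so all supports lie in $\R^d \times \bar B(0,L)$. Over each compact $K$ the masses are bounded by hypothesis. The local version of the compactness theorem (\Cref{th:currents_compactness} applied on an exhaustion by compacta together with a diagonal extraction, e.g. slicing or restricting to $K\times\R^d$ and controlling boundaries through the hypothesis) then yields a subsequence converging locally in flat norm to an integral current $T$.

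I would then check that $T$ satisfies the four axioms characterizing $[\diff \psi]$.
\begin{itemize}
\item $\partial T = 0$ and $T \mres \omega = 0$ pass to weak limits because $\partial$ and $\mres\,\omega$ (with $\omega$ smooth) are weakly continuous.
\item Properness of the first projection holds because the supports stay in $\R^d\times \bar B(0,L)$.
\item For the integration identity we need $\int \phi(x,\nabla\psi_{s_k}(x))\diff x \to \int \phi(x,\nabla\psi(x))\diff x$. This holds because $\nabla \psi_{s_k} = (\nabla\psi)_{s_k} \to \nabla \psi$ almost everywhere, by Lebesgue points of the $L^\infty$ function $\nabla\psi$, and dominated convergence applies since $\phi$ has compact support.
\end{itemize}
By uniqueness of the Monge-Ampère current, $T = [\diff\psi]$; here $\psi$ is Monge-Ampère precisely because $T$ exists. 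Since every subsequence has a further subsequence converging to the same limit, the whole family converges.

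The main obstacle is the compactness step. The currents are not compactly supported (they live over all of $\R^d$), so the result must be read as local flat convergence, and \Cref{th:currents_compactness} requires control of boundaries after localization. I would handle this by restricting to $B_R\times\R^d$ for almost every $R$ via slicing by $\norme{x}$. The slice masses are controlled on average by the mass bound on $B_{R+1}\times \R^d$, which gives Federer–Fleming compactness for the localized currents.
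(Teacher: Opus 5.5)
Your proposal is correct. The first bullet matches the paper's argument; your substitution $\psi_s(x)=\int\psi(x-sz)\,\xi(z)\,\mathrm{d}z$ is in fact the correct version of the paper's mistyped formula.

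For the second bullet the paper gives no argument of its own. It invokes Fu's stability result \cite[Proposition 2.7]{MongeAmpere} for Monge--Amp\`ere functions that converge locally uniformly with locally bounded Lipschitz constants and locally bounded masses. It then only checks that $\psi_s$ is $L$-Lipschitz and that $\psi_s\to\psi$ in $C^0$.

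You instead reprove that result in the special case of mollifications. You use local compactness, pass the cycle, Lagrangian and properness conditions to the limit, identify the $\mathrm{d}x_1\wedge\dots\wedge\mathrm{d}x_d$-component, and conclude with Fu's uniqueness theorem.

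\begin{itemize}
\item Your route is self-contained and makes the integration identity nearly free. Since $\nabla\psi_s=(\nabla\psi)*\xi_s\to\nabla\psi$ at every Lebesgue point of $\nabla\psi$, dominated convergence applies directly. For a general uniformly convergent sequence the gradients need not converge almost everywhere, so this step is less immediate there.
\item The paper's route is shorter and covers arbitrary approximating sequences, not only convolutions.
\item Both routes ultimately rest on uniqueness of Monge--Amp\`ere currents.
\end{itemize}

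The compactness step is also less delicate than you expect. Every $[\diff\psi_{s_k}]$ is a cycle, so the local Federer--Fleming compactness theorem for locally integral currents applies directly. That theorem only needs bounds on mass plus boundary mass over compacta, and your local mass bounds supply them. The slicing by $\norme{x}$ is therefore unnecessary, though it also works.
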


\begin{proof}
    For the first point, observe that by a change of variable we have 
    \[ \psi_s(x) = \int_{\R^d} \xi(y) \psi(y - sx) \diff y, \]
    so that
    \begin{align*}
        \module{\psi_s(x) - \psi_{s'}(x)} \leq & \int_{\R^d} \xi(y) \module{\psi(y - sx) - \psi(y - s'x)} \diff y   \\
        \leq & \int_{\R^d} \xi(y) \module{s - s'} \norme{y} L \diff y \leq L \module{s - s'}. \\
    \end{align*}
    
    The second point follows from \cite[Proposition 2.7]{MongeAmpere} using the fact that the Lipschitz constants of $\psi_s$ are uniformly bounded and that $\psi_s$ converges to $\psi$ in the $C^0$ topology.
\end{proof}

The following proposition provides a bound on the mass of $[\diff f]$ when $f$ is convex.

\begin{proposition}[Mass bound for Monge-Ampère currents, convex case]
    \label{prop:MA_bound}
        Let $f : \R^d \to \R$ be a convex function (not necessarily $C^{1,1}$), $K \subset \R^d$ be a compact subset of $\R^d$. Then we have
      \begin{equation*}
        M([\diff f] \mres K \times \R^d) \leq \omega_d (\diam(K) + 2\lip(f|_K))^d. 
    \end{equation*}
    where $\omega_d$ is the volume of the unit ball of $\R^d$.
\end{proposition}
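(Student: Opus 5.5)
We first reduce to the smooth case and then pass to the limit. Let $f_s$ be the mollification of $f$ as in \Cref{lem:convolution}. Each $f_s$ is smooth and convex, being an average of translates of $f$. We will obtain the bound for $f_s$ on a slightly enlarged compact set, and then let $s \to 0$. Lower semicontinuity of mass under flat convergence then transfers the bound to $[\diff f]$; this also supplies the uniform mass hypothesis needed in \Cref{lem:convolution} to get flat convergence $[\diff f_s] \to [\diff f]$. To keep the Lipschitz constant under control, we work on a neighborhood $K_\delta$ of $K$ and note that $\lip(f_s|_K) \le \lip(f|_{K_s})$, which tends to $\lip(f|_K)$ as the neighborhood shrinks. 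The restriction $\mres K\times\R^d$ is handled by testing against forms supported in $U\times\R^d$ for open $U \supset K$. Lower semicontinuity holds on such open sets, so we conclude with $\limsup$ over $U \downarrow K$.

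For smooth convex $f$, \Cref{prop:MA_exact_mass} gives
\[ M([\diff f]\mres K\times\R^d) = \int_K \det(I_d + (H_xf)^2)^{1/2}\diff x .\]
Let $\lambda_i\ge 0$ be the eigenvalues of $H_x f$. Then the integrand equals $\prod_i (1+\lambda_i^2)^{1/2} \le \prod_i (1+\lambda_i) = \det(I_d + H_x f)$. The latter is exactly the Jacobian of the map $F : x \mapsto x + \nabla f(x)$, and this map is injective because $f$ is convex: $\scal{F(x)-F(y)}{x-y} \ge \norme{x-y}^2$. Hence
\[ \int_K \det(I_d + H_xf)\diff x = \Vol(F(K)) .\]

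It remains to bound $\Vol(F(K))$. Fix $x_0 \in K$. For $x\in K$ we have $\norme{F(x) - F(x_0)} \le \norme{x-x_0} + \norme{\nabla f(x)} + \norme{\nabla f(x_0)}$. To control this, we bound the gradient norms by the Lipschitz constant of $f$ near $K$, giving $\norme{F(x) - F(x_0)} \le \diam(K) + 2\lip(f|_K)$. Thus $F(K)$ lies in a ball of radius $\diam(K) + 2\lip(f|_K)$, and its volume is at most $\omega_d(\diam(K)+2\lip(f|_K))^d$. (An isodiametric argument would even give the better constant with radius halved; the stated bound is the crude one.)

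The main obstacle is the gradient bound on the boundary of $K$: for a convex function the gradient at a point of $K$ is only controlled by the Lipschitz constant on a neighborhood. This is why we insist on working on $K_\delta$ and with mollifications, and we must verify that the limits $\delta, s \to 0$ recover $\lip(f|_K)$. If $\lip(f|_K)$ is interpreted as a local Lipschitz constant on a neighborhood, the argument goes through directly. Otherwise, by continuity of convex functions, we would use that the upper semicontinuous envelope of $\norme{\partial f}$ over points of $K$ is bounded by the limit of $\lip(f|_{K_\delta})$.
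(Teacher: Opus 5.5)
Your route is the paper's: in the smooth case $\det(I_d+(H_xf)^2)^{1/2}\le\det(I_d+H_xf)$, injectivity of $x\mapsto x+\nabla f(x)$, change of variables, and a diameter bound on the image; in general, mollification plus lower semicontinuity of mass. Your insistence on testing against open sets $U\times\R^d$ with $U\supset K$ is justified. Lower semicontinuity of $T\mapsto M(T\mres A)$ under flat convergence holds for open $A$, not for the closed set $K\times\R^d$. The paper's proof passes over this, and it also uses $\lip(f_s|_K)\le\lip(f|_K)$ without justification.

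The step that fails is your claim that $\lip(f|_{K_s})\to\lip(f|_K)$. For convex $f$ one has $\lim_{\delta\to0}\lip(f|_{K_\delta})=\sup\{\norme{p} : p\in\partial f(x),\ x\in K\}$, where $\partial f$ is the subdifferential, and this can be strictly larger than $\lip(f|_K)$. Your ``otherwise'' sentence bounds the subgradients by this limit, which is the wrong direction.

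No argument can close that branch, because under the literal reading of $\lip(f|_K)$ the inequality is false. For $f(x)=\norme{x}$ and $K=\{0\}$, the current $[\diff f]\mres(K\times\R^d)$ is $\{0\}\times\bar B(0,1)$, of mass $\omega_d$, while the right-hand side is $0$. For an example with nonempty interior, take $K=\bar B(0,1)$ and $f(x)=A\max(0,\norme{x}-1)$. Then $f|_K\equiv 0$, yet the piece $\{(x,\lambda x) : \norme{x}=1,\ 0\le\lambda\le A\}$ of the current has mass at least $A\,\mathcal{H}^{d-1}(\Sphere^{d-1})$, which is unbounded in $A$. In the $C^{1,1}$ case the literal bound does hold, since $\norme{\nabla f}\le\lip(f|_K)$ at density points of $K$; it is the passage to nonsmooth $f$ that breaks it.

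So the proposition must be read with $\lip(f|_K)$ replaced by $\lim_{\delta\to0}\lip(f|_{K_\delta})$. This is consistent with how it is used later: the final theorem assumes $f,g$ Lipschitz on a neighborhood of $X$. Under that reading your proof is complete. Let $U$ be the open $\delta$-neighborhood of $K$ and $s<\delta$. Then $\sup_{\bar U}\norme{\nabla f_s}\le\lip(f|_{K_{2\delta}})$ and $\diam(\bar U)\le\diam(K)+2\delta$. The smooth bound on $\bar U$ plus lower semicontinuity on $U\times\R^d$ give $M([\diff f]\mres K\times\R^d)\le\omega_d(\diam(K)+2\delta+2\lip(f|_{K_{2\delta}}))^d$, and you let $\delta\to0$. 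Simply delete the ``otherwise'' alternative.
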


\begin{proof}
First assume that $f$ is $C^{1,1}$. From \Cref{prop:MA_exact_mass}, we have 
    \begin{equation*}
        M([\diff f] \mres K \times \R^d) = \int_{K} \det(I_d + (H_x f)^2)^{1/2} \diff x.
    \end{equation*}  
    The Hessian $H_x f$ is semi-definite positive, so that $\det(I_d + H_x f)^2 = \det(I_d + 2 H_x f + (H_x f)^2) \geq \det(I_d + (H_x f)^2)$. Thus we have
        \begin{equation*}
        M([\diff f] \mres K \times \R^d) \leq \int_{K} \det(I_d + H_x f) \diff x. 
        \end{equation*}  
        Now at point $x$ where it is defined, $I_d + H_x f$ is the differential of $\phi : x \mapsto x + \nabla f(x)$. Map $\phi$ is injective as it is the differential of the strictly convex function $x \mapsto \frac{1}{2} \norme{x}^2 + f(x)$, and Lipschitz by assumption. Thus by change of variable we have
        \begin{equation*}
        \int_{K} \det(I_d + H_x f) \diff x = \Vol( \phi(K)).
        \end{equation*}
        The set $\phi(K)$ has diameter bounded by $\diam(K) + 2 \lip(f|_{K})$, whence the desired result in case $f$ is $C^{1,1}$.

Finally, in the general case, $f$ is the limit of the smoothed $f_s$. By the second point of \Cref{lem:convolution}, we have in the flat norm $[\diff f_s] \to [\diff f]$ and the result follows follows from the lower semicontinuity of masses of currents.
\end{proof}

We will make use of the following lemma from \cite[Section 6]{curv_dc}.
    \begin{lemma}[Determinant of difference of matrices]
    \label{lem:matrix}
     Let $A, B$ be square matrices of size $d$. Then, 
     \begin{equation*}
         \det(A - B) = \frac{1}{d!} \sum_{i=0}^d (-1)^i \binom{d}{i} \det( (d-i)A + iB).
     \end{equation*}
     
    \end{lemma}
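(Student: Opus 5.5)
The plan is to treat the right-hand side as a $d$-th finite difference of a one-variable polynomial. The identity is purely algebraic, and it follows from the fact that the $d$-th finite difference of a polynomial of degree at most $d$ picks out $d!$ times its leading coefficient.

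First, I rewrite each argument as $(d-i)A + iB = dA + i(B-A)$. This lets me define
\[ q(x) \coloneqq \det\bigl(dA + x(B-A)\bigr), \qquad x \in \R. \]
Expanding the determinant multilinearly in the columns shows that $q$ is a polynomial in $x$ of degree at most $d$. Its coefficient of $x^d$ is $\det(B-A) = (-1)^d \det(A-B)$, since that term comes from choosing the $x(B-A)$ part in every column. The right-hand side of \Cref{lem:matrix} is then $\frac{1}{d!}\sum_{i=0}^d (-1)^i \binom{d}{i} q(i)$.

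Next, I use the forward difference operator $(\Delta q)(x) = q(x+1) - q(x)$. It satisfies
\[ (\Delta^d q)(0) = \sum_{i=0}^d (-1)^{d-i}\binom{d}{i} q(i). \]
Each application of $\Delta$ lowers the degree by one and multiplies the leading coefficient by the current degree. Hence for $\deg q \le d$ we get $\Delta^d q \equiv d!\,c_d$, where $c_d$ is the coefficient of $x^d$; this also holds when $c_d = 0$. A clean way to see this is to check it on the monomials $x^k$ with $k \le d$ and extend by linearity.

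Combining the two steps,
\[ \sum_{i=0}^d (-1)^i \binom{d}{i} q(i) = (-1)^d (\Delta^d q)(0) = (-1)^d\, d!\,(-1)^d\det(A-B) = d!\det(A-B), \]
which is exactly the claimed formula after dividing by $d!$. I expect no real obstacle here. The only point needing care is the sign bookkeeping between $(-1)^i$ and $(-1)^{d-i}$, together with the sign $(-1)^d$ coming from $\det(B-A)$; these two factors of $(-1)^d$ cancel.
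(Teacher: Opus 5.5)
Your proof is correct and complete. Writing $(d-i)A+iB=dA+i(B-A)$ turns the right-hand side into $\frac{(-1)^d}{d!}$ times the $d$-th forward difference at $0$ of $q(x)=\det\bigl(dA+x(B-A)\bigr)$. That difference equals $d!$ times the $x^d$-coefficient $\det(B-A)=(-1)^d\det(A-B)$, and the signs cancel as you say.

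The paper gives no proof of this lemma; it simply imports it from \cite[Section 6]{curv_dc}. So your computation is a self-contained replacement for that citation rather than an alternative to an argument in the text.

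Your route also makes visible what the identity actually uses: only that $x\mapsto\det(dA+x(B-A))$ is a polynomial of degree at most $d$ with leading coefficient $\det(B-A)$. Two consequences follow. First, the identity holds verbatim with $\det$ replaced by any homogeneous polynomial of degree $d$. Second, the number of terms must match the size of the matrices. For $k\times k$ matrices with $k<d$, the same $(d+1)$-term alternating sum vanishes identically, since then $\deg q\le k<d$.

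This matters for how the lemma is invoked on the $k\times k$ Hessian minors in the proof of \Cref{lem:ma_bound}. There it should be applied in size $k$. Alternatively, after padding the minors of $f$ and $g$ with an identity block and a zero block respectively, one must keep the extra factors $(d-i)^{d-k}$ that the padding produces. As displayed in the paper, the inequality for the minors already fails for $I=J=\emptyset$ when $d\geq 4$, where it reads $\Vol(K)\leq\frac{2^d}{d!}\Vol(K)$. This affects only the value of the constant $C_d$, not the finiteness of the mass bound that the rest of the paper relies on.
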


We use \Cref{lem:matrix} and \Cref{prop:MA_bound} to obtain a bound on the mass of Monge-Ampère currents of d.c functions.

    \begin{lemma}[Mass bound for Monge-Ampère currents, d.c case]
    \label{lem:ma_bound}
    For any function $h : \R^d \to \R$ of the form $h=f-g$ where $f,g$ are convex, we have for every compact subset $K$ of $\R^d$:
    \begin{equation*}
    \label{eq:mineurs}
        M([\diff h] \mres K \times \R^d)) \leq C_d (\diam(K) + 2d\max(\lip(f|_K),\lip(g|_K)))^d,
    \end{equation*}
    with $C_d = \frac{\omega_d}{d!}2^d \binom{2d}{d}$.
    \end{lemma}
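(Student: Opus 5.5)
We first reduce to the case $f,g \in C^{1,1}$, and we will remove this assumption only at the very end. If $f,g$ are $C^{1,1}$, then so is $h$, and \Cref{prop:MA_exact_mass} gives
\begin{equation*}
M([\diff h]\mres K\times\R^d)=\int_K \det(I_d+(H_xh)^2)^{1/2}\diff x .
\end{equation*}
The integrand is the $d$-volume Jacobian of the graph map $x\mapsto(x,\nabla h(x))$. By Cauchy--Binet it equals the square root of the sum of the squares of all minors of $H_xh$ (all sizes, with the empty minor equal to $1$). This is bounded by the sum of the absolute values of those minors, which is at most $\sum_{\sigma\in\{\pm1\}^d}|\det(I_d+D_\sigma H_xh)|$ up to a combinatorial constant. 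A cleaner alternative is to bound the integrand by $\det(I_d+|H_xh|)$, using $\det(I+H^2)^{1/2}\le\det(I+|H|)$ eigenvalue by eigenvalue. We then write $H_xh=A-B$ with $A=H_xf$ and $B=H_xg$, both positive semidefinite.

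The second step applies \Cref{lem:matrix}, which is designed to trade a difference for positive combinations. The cleanest route is to write $I_d+H_xh$, or more generally $I_d+P(A-B)$ for a projector-type sign choice, as $\tilde A-\tilde B$ with $\tilde A=\frac12 I_d+A$ and $\tilde B=-\frac12 I_d+B$. Expanding with \Cref{lem:matrix} gives
\begin{equation*}
\det(\tilde A-\tilde B)=\frac1{d!}\sum_{i=0}^d(-1)^i\binom di\det\big((d-i)\tilde A+i\tilde B\big),
\end{equation*}
and each matrix $(d-i)\tilde A+i\tilde B$ is, up to an identity shift with coefficient of size at most $d$, the Hessian of the convex function $(d-i)f+ig$. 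Each term is therefore a Jacobian $\det(I_d+H_x F_i)$ scaled, where $F_i=\frac1{c}((d-i)f+ig)$ is convex and $c\in[1,2d]$ normalises the identity part, or a similar absolute quantity. Integrating over $K$ and using the injectivity/change-of-variables argument from the proof of \Cref{prop:MA_bound}, each term is at most $\omega_d(\diam K+2\lip(F_i|_K))^d$ times a factor $2^d$ coming from the scaling. Since $\lip((d-i)f+ig)\le d\max(\lip f,\lip g)$, each term is bounded by a multiple of $\omega_d(\diam K+2d\max(\lip f|_K,\lip g|_K))^d$. Summing $\binom di$ over $i$ with $\frac1{d!}$ in front, together with the $2^d$ and the sign-choice bookkeeping, should give $C_d=\frac{\omega_d}{d!}2^d\binom{2d}{d}$. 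The factor $\binom{2d}{d}$ plausibly arises as $\sum_i\binom di^2$, which would come from also expanding the minors-sum over sizes.

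The final step treats general convex $f,g$. We mollify: $h_s=f_s-g_s$ with $f_s,g_s$ convex and smooth, and $\lip(f_s|_K)\le\lip(f|_{K^s})$, and similarly for $g_s$. The $C^{1,1}$ bound applied to $h_s$ gives a uniform mass bound on compacta. The second point of \Cref{lem:convolution} then yields $[\diff h_s]\to[\diff h]$ in the flat norm, and lower semicontinuity of mass gives the bound on $K'$ slightly larger than $K$. Letting the neighbourhoods shrink, using continuity of $\lip$ on slightly enlarged compacta, or applying the bound to the interior of $K$ and then taking a limit, concludes the argument.

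The main obstacle is the middle step. We must convert the non-polynomial integrand $\det(I+(A-B)^2)^{1/2}$ into a signed combination of determinants of the form $\det(I+\text{Hessian of convex})$ so that \Cref{lem:matrix} applies, while tracking constants precisely enough to land exactly on $C_d$. If the constant does not match exactly, we would settle for a bound via the minor expansion: each $k\times k$ minor of $A-B$ is controlled through \Cref{lem:matrix} restricted to principal blocks and the mixed-volume interpretation of the $\det(I+tH)$ coefficients.
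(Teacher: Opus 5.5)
\textbf{There is a genuine gap in the middle step.} As you suspected, this is where the argument breaks, and it is more than a matter of tracking constants: as written, no bound is obtained.
\begin{itemize}
\item The inequality you establish, $\det(I_d+(H_xh)^2)^{1/2}\le\det(I_d+\module{H_xh})$, involves $\module{A-B}$, which is not a polynomial in the entries of $A-B$. Writing it as $P(A-B)$ with the pointwise sign operator $P$ (or passing to $D_\sigma H_xh$) destroys what \Cref{lem:matrix} needs. The matrices $PA$ and $PB$ are not symmetric, not positive semidefinite, and not Hessians of any function.
\item Your explicit splitting $\tilde A=\frac12 I_d+A$, $\tilde B=-\frac12 I_d+B$ gives $\tilde A-\tilde B=I_d+H_xh$. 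Its determinant is signed and does not dominate $\det(I_d+(H_xh)^2)^{1/2}\ge 1$; it vanishes wherever $H_xh$ has eigenvalue $-1$.
\item After the expansion, $(d-i)\tilde A+i\tilde B=\frac{d-2i}{2}I_d+H_xF_i$ with $F_i=(d-i)f+ig$. For $i>d/2$ the identity coefficient is negative. The map $x\mapsto\frac{d-2i}{2}x+\nabla F_i(x)$ is then the gradient of $F_i-\frac{2i-d}{4}\norme{x}^2$, which need not be convex. So the map need not be injective, and the change-of-variables argument of \Cref{prop:MA_bound} gives no control on $\int_K\module{\det(\cdot)}\diff x$.
\item Even for the positive shifts $c\le d/2$, that argument yields $(c\,\diam K+\cdots)^d$, not coefficient $1$ on $\diam K$.
\end{itemize}
So $C_d$ is guessed rather than derived.

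\textbf{The missing idea: stay at the level of individual minors.} This is exactly what the paper does, and your first step already reached the right quantity. After Cauchy--Binet and $\sqrt{\sum m^2}\le\sum\module{m}$, keep $\int_K\sum_{\#I=\#J}\module{\det(H_xh)_{IJ}}\diff x$ as the bound, over all pairs $(I,J)$, not only principal blocks. The paper reaches the same sum by splitting a test form of norm at most $1$ into components $\phi_{I,J}\,\diff x_I\wedge\diff y_J$ and using the explicit formula for $[\diff h](\phi\,\diff x_I\wedge\diff y_J)$.
\begin{itemize}
\item Since $(A-B)_{IJ}=A_{IJ}-B_{IJ}$, applying \Cref{lem:matrix} blockwise writes each minor of $H_xh$ as a signed combination of the same minors of $H_xF$. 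Here $F$ ranges over convex functions $af+bg$ with $a,b\in\N$ and $a+b\le d$.
\item No identity shift is needed afterwards. Your own Cauchy--Binet identity applied to $F$ gives $\module{\det(H_xF)_{IJ}}\le\det(I_d+(H_xF)^2)^{1/2}$.
\item Hence $\int_K\module{\det(H_xF)_{IJ}}\diff x\le M([\diff F]\mres K\times\R^d)\le\omega_d(\diam K+2\lip(F|_K))^d$, using \Cref{prop:MA_exact_mass} and \Cref{prop:MA_bound} as black boxes, with $\lip(F|_K)\le d\max(\lip(f|_K),\lip(g|_K))$.
\item In the paper's bookkeeping, $\binom{2d}{d}=\sum_k\binom{d}{k}^2$ counts the pairs $(I,J)$, as you guessed, and $\frac{1}{d!}\sum_i\binom{d}{i}=\frac{2^d}{d!}$ comes from \Cref{lem:matrix}.
\item Take care that the lemma is applied to a $k\times k$ block either at size $k$ or after completing it to a $d\times d$ matrix. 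A $d$-th finite difference annihilates polynomials of degree $k<d$. This choice affects the exact combinatorial factor, not the form of the bound.
\end{itemize}

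\textbf{The closing move in your approximation step also fails.} For nonsmooth convex $f$, $\lip(f|_{K^\delta})$ need not tend to $\lip(f|_K)$. Restricting to the interior of $K$ loses the mass of $[\diff h]$ on fibres over $\partial K$. In fact, with Lipschitz constants taken on $K$ only, the inequality is false. Take $d=1$, $f(x)=\module{x}$, $g=0$, $K=\{0\}$: the current $[\diff h]$ contains the vertical segment $\{0\}\times[-1,1]$, so the left-hand side is $2$ while the right-hand side is $0$. What mollification plus lower semicontinuity actually proves is the bound with the right-hand side evaluated on an arbitrarily small neighbourhood of $K$. The paper's one-line limiting argument passes over the same point.
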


    \begin{proof}
    First, assume that $f$ and $g$ are both convex and $C^{1,1}$.    
     For any $I, J \subset \{ 1 , \dots, d \}$ such that $\# I + \# J = d$ and any function $\phi : \R^d \times \R^d \to \R$ the following identity (see for instance \cite[2.8a]{FuSub}):
    \begin{equation}
        \label{eq:expression_ma}
        ([ \mathrm{d} h] \mres K \times \R^d) (\phi \diff x_I \wedge \diff y_J) = \int_{K} \phi(x, \nabla f(x) - \nabla g(x)) \det \left (\frac{\partial^2 (f - g)(x)}{\partial x_i \partial x_j}\right )_{i \notin I, j \in J} \diff x. 
    \end{equation}
    where $\diff x_I, \diff y_J$ are respectively $\bigwedge_{i \in I} \diff x_i$ and $\bigwedge_{j \in J} \diff y_j$. Now take $\psi$ a smooth $d$-differential form with support in $K$ and norm at most 1. Decompose it as
     \begin{equation*}
         \psi = \sum_{\substack{I, J \subset \{ 1, \dots d \}\\ \# I + \# J = d 
    }} \phi_{I,J} \diff x_I \wedge \diff y_J,
     \end{equation*}
     where $\phi_{I,J}$ are smooth functions supported in $K$. By applying $\psi$ to the $n$-vectors drawn to the canonical basis we see that $\norme{\phi_{I,J}}_{\infty} \leq 1$. Plugging this in \Cref{eq:expression_ma} yields
     \begin{equation*}
        [ \mathrm{d} h]( \psi) \leq \int_{\R^d} \sum_{\substack{I, J \subset \{ 1, \dots d \}\\ \# I + \# J = d 
    }} \left | \det \left (\frac{\partial^2 (f - g)(x)}{\partial x_i \partial x_j}\right )_{i \notin I, j \in J} \right | \diff x,
     \end{equation*}
     which can be rewritten as 
          \begin{equation*}
        [\mathrm{d}h]( \psi) \leq \int_{K} \sum_{\substack{I, J \subset \{ 1, \dots d \}\\ \# I = \# J 
    }} \left | \det \left (\frac{\partial^2 (f - g)(x)}{\partial x_i \partial x_j}\right )_{i \in I, j \in J} \right | \diff x.
     \end{equation*}
    Fixing $I,J \subset \{ 1, \dots, d \}$ of same cardinality, we have 
\[
\mathmakebox[1.1\linewidth][c]{
    \displaystyle
        \int_{K} \module{\det \left (\frac{\partial^2 (f - g)(x)}{\partial x_i \partial x_j}\right )_{i \in I, j \in J}} \diff x \leq \frac{1}{d!}\sum_{i=0}^d \binom{d}{i} \int_{K} \module{\det \left (\frac{\partial^2 ( (d-i)f + ig)(x)}{\partial x_i \partial x_j}\right )_{i \in I, j \in J}} \diff x.
}
\]
     by applying \Cref{lem:matrix} to the matrices involved after completing them by the identity to get $d \times d$ matrices.
     
    Now each integral in the right-hand side is the mass of $([\diff \, ((d-i)f + ig)] \mres \diff x_I \wedge \diff y_J)$ on $K \times \R^d$, which is at most the mass of $[\mathrm{d} ((d-i) f + i g)]$ on $K \times \R^d$ by taking $h = (d-i)f + ig$ in \Cref{eq:expression_ma}. Further summing over pairs of subsets $I,J \subset \{ 1, \dots, d \}$ of same cardinality, of which there are $\binom{2d}{d}$, we obtain
    \begin{equation*}
        [\diff h](\psi) \leq \frac{1}{d!} \binom{2d}{d} \sum_{i=0}^d \binom{d}{i} M([\mathrm{d} \left ((d-i)f + ig \right)]).
    \end{equation*}
    Since $(d-i)f + ig$ is $d\max(\lip(f|_K), \lip(g|_K))$-Lipschitz on $K$, the bound of \Cref{prop:MA_bound} yields the desired result.
    
    As for the general case, from \Cref{lem:convolution} we have $[ \diff h_s] \to [ \diff h]$ in the flat norm when $s$ goes to zero, and the desired result follows from the lower semicontinuity of the mass of currents.
    \end{proof}

    The following proposition shows that the mass of the normal cycle of a WDC set is controlled by the mass of the Monge-Ampère current of its aura. 

    \begin{proposition}
    \label{prop:normal_bound}
    Let $h : \R^d \to \R_{\geq 0}$ be a d.c nondegenerate aura with $X \coloneqq h^{-1}(  0 )$ and let $\mu \coloneqq \lim_{t \to 0^+} \inf \{ \Delta (\clarke h(x) ) ,  0 < h(x) < t \}$. Denote $N_X$ the normal cycle of $X$. We have 
    \begin{equation*}
     M(N_X) \leq K_{d, \mu} M( [\diff h] \mres X \times \R^d ).
    \end{equation*}
    where $K_{d, \mu}$ is a constant depending on $d, \mu$.
    \end{proposition}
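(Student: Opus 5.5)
We plan to use the known description of the normal cycle of a WDC set from \cite{normalWDC}: $N_X$ is obtained from the Monge-Ampère current $[\diff h]$ by restricting it to $X \times \R^d$, removing the part over the interior of $X$, and pushing forward by the normalization map $\nu : (x,y) \mapsto (x, y/\norme{y})$. Concretely, the identity we aim to use is
\begin{equation*}
N_X = \nu_* \left( \partial \left( [\diff h] \mres ( h^{-1}(0) \times \R^d ) \right) \mres (\R^d \times (\R^d \setminus \mu B)) \right),
\end{equation*}
or, equivalently, $N_X = \nu_* \langle [\diff h], h\circ \pi_0, 0^+ \rangle$, a slice of $[\diff h]$ by $h$ composed with the projection $\pi_0$ onto the first factor. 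The key point is that nondegeneracy ensures that near $\partial X$, on the support of $[\diff h]$, the fiber coordinate $y$ lies in the Clarke gradient and satisfies $\norme{y} \geq \mu$, at least for $h(x) > 0$ small. The mass bound then follows in two steps: a bound on the mass of the current before normalization, and a Lipschitz bound on $\nu$.

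For the first step, we will control the boundary or slice by the mass of $[\diff h]$ near $X$. We will use the Lagrangian property together with $\partial [\diff h] = 0$. The slice identity \eqref{eq:slice} with $\pi = h \circ \pi_0$ gives
\begin{equation*}
\int_0^t M(\langle [\diff h], h\circ\pi_0, s\rangle) \diff s \leq \lip(h)\, M([\diff h] \mres h^{-1}(0,t] \times \R^d).
\end{equation*}
This only bounds slices on average, and the $\lip(h)$ factor is undesirable, since the statement has a constant depending only on $d$ and $\mu$. It is better to slice with the function $(x,y) \mapsto$ (the ``$\scal{y}{\cdot}$'' direction), or to use the fact that on the relevant part of the support $dh = \langle y, dx\rangle$. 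Then $|d(h\circ\pi_0)|$ restricted to the current is $\leq \norme{y}$, which is bounded by $\lip(h)$ locally but, more to the point, is comparable to $\norme{y}$. We will then average and let $t \to 0$, using lower semicontinuity of mass and the flat convergence of slices as $s\to 0^+$ to the limit slice defining $N_X$. The term $M([\diff h]\mres h^{-1}(0,t]\times\R^d)$ tends to $M([\diff h] \mres \partial X \times \R^d)$-type quantities, which are bounded by $M([\diff h]\mres X\times\R^d)$ after taking closures appropriately.

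For the second step, the map $\nu$ restricted to $\{\norme{y} \geq \mu\}$ is Lipschitz with constant $\max(1, 1/\mu)$. A $(d-1)$-current has its mass multiplied by at most $\max(1,\mu^{-1})^{d-1}$ under pushforward, which yields a $K_{d,\mu}$ of this form.

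The main obstacle is the first step: making rigorous that the slice mass at level $0^+$ is controlled by $[\diff h]$ restricted to $X\times\R^d$ itself rather than to a neighbourhood, without a $\lip(h)$ factor. I would first try the homotopy/cone formula. Since $[\diff h]$ is Lagrangian and conic-like near the zero set, the boundary piece equals $\partial([\diff h]\mres X\times\R^d)$ restricted to $\norme{y}\ge\mu$. I would then bound its mass via a radial retraction in $y$, whose Jacobian relative to the original current is controlled by $1/\mu$.
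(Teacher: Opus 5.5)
Your second step (the Lipschitz bound for $\nu$ on $\{\norme{y}\ge\mu\}$) is fine. The first step, which you flag as the main obstacle, has a genuine gap, and the route you actually carry out cannot close it. Slicing by $h\circ\pi_0$ produces currents $\tranche{[\diff h]}{h\circ\pi_0}{s}$ supported in $h^{-1}(s)\times\R^d$. For $s>0$ this set is disjoint from $X\times\R^d$, so these masses can never be controlled by $M([\diff h]\mres X\times\R^d)$.

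Concretely, your inequality only bounds the \emph{average} $\frac1t\int_0^t M(\tranche{[\diff h]}{h\circ\pi_0}{s})\diff s$ by $\frac{\lip(h)}{t}M([\diff h]\mres h^{-1}(0,t]\times\R^d)$. Since $h^{-1}(0,t]$ decreases to the empty set, this mass tends to $0$, not to a quantity over $\partial X$. What remains is a difference quotient in $t$ involving only the part of $[\diff h]$ over $\R^d\setminus X$. Replacing the weight $\lip(h)$ by $\norme{y}$ does not change the region of integration.

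The fallback via $\partial([\diff h]\mres X\times\R^d)$ and a radial retraction does not work as stated either. The mass of a boundary is not controlled by the mass of the current. Pushing a $(d-1)$-current forward by a retraction never turns a bound on a $d$-dimensional mass into a bound on a $(d-1)$-dimensional one; an integration over a one-parameter family is needed.

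The missing idea is to slice by the fiber norm $\gamma(x,y)=\norme{y}$; this is the function you leave unspecified. The support of $[\diff h]$ over $x$ lies in $\{x\}\times\clarke h(x)$. Hence nondegeneracy gives, for each $\eps>0$, a neighbourhood $U$ of $X$ (e.g. $U=h^{-1}[0,\tau)$ with $\tau$ small) on which the part of $[\diff h]$ with $\norme{y}<\mu-\eps$ lies over $X$. By Fu's results, $N_X=\nu_\# S_t$ with $S_t\coloneqq\tranche{[\diff h]\mres(U\times\R^d)}{\gamma}{t}$ for a.e. $t\in(0,\mu-\eps)$. On $\{\norme{y}=t\}$ the map $\nu$ is $\max(1,1/t)$-Lipschitz, so $M(N_X)\le\max(1,t^{-(d-1)})M(S_t)$.

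Because $\gamma$ is $1$-Lipschitz, the slicing formula gives, for $0<a<b<\mu-\eps$,
\[
\int_a^b M(S_t)\diff t\le M([\diff h]\mres((U\times\R^d)\cap\gamma^{-1}[a,b]))\le M([\diff h]\mres X\times\R^d),
\]
with no $\lip(h)$ factor. Integrating the previous bound over $[a,b]$ yields $(b-a)M(N_X)\le\max(1,a^{-(d-1)})M([\diff h]\mres X\times\R^d)$. Taking $a=\mu/2$ and letting $b\to\mu$ gives $K_{d,\mu}=2\max(1,(2/\mu)^{d-1})/\mu$.

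Note the extra factor $1/\mu$ from averaging over radii. A constant of the form $\max(1,\mu^{-1})^{d-1}$, as you suggest, is too small. Take $X=\{0\}$ and $h=\lambda\norme{x}$: then $\mu=\lambda$, $M(N_X)=d\omega_d$ and $M([\diff h]\mres X\times\R^d)=\omega_d\lambda^d$, which forces $K_{d,\mu}\ge d\mu^{-d}$.
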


    \begin{proof}
    Let $\gamma : (x,\xi) \mapsto \norme{\xi}$ and let $\nu: (x, \xi) \mapsto (x, \xi/\norme{\xi})$.
   For every $\eps > 0$, there exists a small enough neighborhood $U$ of $X$ such that 
   \begin{equation*}
       x \in U \text{ and } 0 <\Delta( \clarke h(x)) \leq \mu -\eps \implies x \in \partial X. 
   \end{equation*}
   From the work of Fu \cite{FuSub}, this implies that for any almost every $t$  in $(0, \mu - \eps)$, the normal cycle $N_X$ is the pushforward of the slice $\slice{t}$ by the normalizing map $\nu$:
   \begin{equation*}
    N_X = \nu_{\#}\slice{t}.
   \end{equation*}
   Now the map $\nu$ is $\max(1, t^{-(d-1)})$-Lipschitz on the support of $\slice{t}$, so that writing $\psi(t) = \max(1, t^{-(d-1)})$ we get
   \begin{equation}
   \label{eq:normal_mass_bound}
    M(N_X) \leq \psi(t) M(\slice{t}).   
   \end{equation} 

    Since the map $\gamma$ is 1-Lipschitz and $\psi$ is nonincreasing,
    from \Cref{eq:slice} relating the integrals of slices to the original current, we obtain
   \begin{align*}
       \int_{a}^b \psi(t) M( \slice{t}) \diff t \leq & \psi(a) \int_{a}^b M(\slice{t})\diff t \\
       \leq & \psi(a) M \left( [\diff h] \mres ((U \times \R^d) \cap \gamma^{-1}[a,b]) \right ) \\
       \leq & \psi(a) M([\diff h] \mres (X \times \R^d)).
   \end{align*}
   Finally, we integrate \Cref{eq:normal_mass_bound} for $a \leq t \leq b$ to obtain
   \begin{equation*}
       (b-a) M(N_X) \leq \psi(a) M([\diff h] \mres (X \times \R^d)).
   \end{equation*}
    Letting $a = \mu/2$ and $b$ go to $\mu$ yields the desired result with ${K_{d, \mu} = 2\psi(\mu/2)/\mu}$.
    \end{proof}

Combining the results above gives:

\begin{proposition}[Mass bound for normal cycles of sublevel sets of d.c functions]
\label{prop:sub_dc_aura}
Let $h = f-g: \R^d \to \R_{\geq 0}$ be a d.c function and let $t \in \R$ be such that $\mu = \lim_{\delta \to 0^+} \inf \{\Delta(\clarke \, h(x)), t < h(x) \leq t + \delta \}$ is positive. Then $X_t = h^{-1}(-\infty, t]$ is a WDC set and we have
\begin{equation*}
    M(N_{X_t}) \leq C_d K_{d, \mu}( \diam(X)+ 2d\max(\lip(f|_{X_t}), \lip(g|_{X_t})))^d,
\end{equation*}
where $C_d$ (resp. $K_{d, \mu}$) is the constant in \Cref{lem:ma_bound} (resp. \Cref{prop:normal_bound}). 
    
\end{proposition}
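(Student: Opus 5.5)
The plan is to apply \Cref{prop:normal_bound} to a shifted aura and then bound the Monge--Ampère mass using \Cref{lem:ma_bound}.

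\emph{Construct an aura for $X_t$.} Set $\tilde h \coloneqq \max(h - t, 0) = \max(f, g+t) - g$. This is again a d.c.\ function: the maximum of two convex functions is convex. It is nonnegative, and its zero set is exactly $X_t$. On $\{\tilde h > 0\} = \{h > t\}$ the function $\tilde h$ coincides with $h - t$ near every point, so $\clarke \tilde h(x) = \clarke h(x)$ there. The hypothesis then says that for small $\delta$, every $x$ with $0 < \tilde h(x) \le \delta$ has $\Delta(\clarke \tilde h(x)) \ge \mu/2 > 0$. Hence $0$ is a weakly regular value of $\tilde h$, $\tilde h$ is a nondegenerate d.c.\ aura, and $X_t$ is WDC; compactness of $X_t$ I will take as implicit in the setting. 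The constant $\mu$ attached to $\tilde h$ in \Cref{prop:normal_bound} is exactly the $\mu$ of the statement, since the two infima are over the same sets.

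\emph{Bound the normal cycle by the Monge--Ampère current.} \Cref{prop:normal_bound} gives
\[
M(N_{X_t}) \le K_{d,\mu}\, M\bigl([\diff \tilde h] \mres X_t \times \R^d\bigr).
\]
It remains to bound the right-hand side by \Cref{lem:ma_bound} with $K = X_t$. The subtlety is that \Cref{lem:ma_bound} is phrased in terms of the convex pieces of the d.c.\ function, and for $\tilde h$ these are $\max(f, g+t)$ and $g$. I would handle this in one of two ways.

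The first option uses the more natural pieces. On $X_t$ one has $\max(f, g+t) = g + t$, and its Lipschitz constant on a neighbourhood is controlled by $\max(\lip f, \lip g)$, since a maximum of functions is Lipschitz with the larger constant. This yields exactly the stated bound with $\lip(f|_{X_t})$ and $\lip(g|_{X_t})$, modulo neighbourhood issues. Those issues disappear by taking restrictions to slightly larger compacta and a limit, using lower semicontinuity of mass.

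The second option is to observe that $[\diff \tilde h] \mres X_t \times \R^d$ is determined only up to the boundary. One could instead compare with $[\diff (h - t)] = [\diff h]$ restricted to the interior of $X_t$ plus the boundary contribution. This seems messier, so I prefer the first option.

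The $\diam(X)$ in the statement I read as $\diam(X_t)$.

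\emph{Main obstacle.} The delicate step is applying \Cref{prop:normal_bound} and \Cref{lem:ma_bound} to $\tilde h$ instead of $h$, and making sure the Lipschitz constants on $X_t$ of its convex pieces are bounded by those of $f$ and $g$. Locality of Monge--Ampère currents and the equality of Clarke gradients off $X_t$ should make this routine. Combining the two displayed inequalities then gives
\[
M(N_{X_t}) \le C_d K_{d,\mu} \bigl(\diam(X_t) + 2d \max(\lip(f|_{X_t}), \lip(g|_{X_t}))\bigr)^d,
\]
which is the claimed bound.
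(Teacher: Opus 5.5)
Your proposal is correct and is essentially the paper's argument: pass to the aura $\max(h,t)-t$, whose Clarke gradients agree with those of $h$ off $X_t$, then chain \Cref{prop:normal_bound} with \Cref{lem:ma_bound} on $K = X_t$. One harmless slip: $\max(f,g+t)-g$ equals $\max(h,t)$ rather than $\max(h-t,0)$, so its zero set is not $X_t$; the convex pieces should be $\max(f-t,g)$ and $g$ as in the paper, and this constant shift changes neither the Monge--Amp\`ere current nor any Lipschitz constant.
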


\begin{proof}
    Note that $X_t$ is the zero sublevel set of $h^t = \max(h,t) - t$, and that $\max(f-t,g) - g$ provide a d.c decomposition of $h^t$ sharing the same maximum of Lipschitz constants as the d.c decomposition of $h = f - g$. Moreover the Clarke gradients of $h^t$ and $h$ coincide on the complement of $X_t$, so that $h^t$ is a nondegenerate d.c aura for $X_t$ with 
    \[\lim_{s \to 0^+} \inf \{ \Delta( \clarke h^t(x), 0 < h^t(x) \leq s\} \geq \mu \]
    by upper semicontinuity of Clarke gradients. Putting together the bounds of \Cref{lem:ma_bound} and \Cref{prop:normal_bound}, we have 
    \begin{align*}
        M(N_{X_t}) \leq & \; K_{d,\mu} M([\diff h^t] \mres X_t \times \R^d) \\
        \leq & \; K_{d,\mu} C_d (\diam(X_t) + 2d\max(\lip(f|_{X_t}), \lip(g|_{X_t})))^d.
    \end{align*}
\end{proof}

Let $h$ be a nondegenerate aura. Then there exists $\rho, c > 0$ such that for any $x$ with $0 < h(x) \leq c$,
\[ \Delta( \clarke h(x)) \geq \rho. \]
In particular $h$ satisfies a Kurdyka-Łojasiewicz inequality for some constant function $\phi = \rho,c$ as in \Cref{def:approx_finite_length}. We use this fact to show for small enough smoothing parameters $s$, appropriate sublevel sets of $h_s$ are close to $h^{-1}(0)$ with respect to $\dhom$ and $\dacy$.

\begin{lemma}[Deformation retraction and smoothing]
\label{lem:flot_lisse}
   Let $h: \R^d \to \R_{\geq 0}$ be a $2L$-Lipschitz, nondegenerate aura and let $\rho, c$ be given as in the paragraph above. Then for every $\eps > 0$ and $0 < t \leq c$ there exists a positive $S(t, \eps)$, nondecreasing in $t$, such that for every $0 < s \leq \min(S(t, \eps), (c - t)/4L)$, 
   \begin{itemize}
       \item On $h_s^{-1}(t + 2sL)$, $\norme{\nabla h_s(x)} \geq \rho(1 - \eps)$.
       \item There exists a deformation retraction of $h_s^{-1}[0,t + 2sL]$ onto $X = h^{-1}(0)$ whose trajectories have length at most $(t + 4sL)/(\rho - \eps)$, so that  
       \[ \max( \dhom(X, h_s^{-1}[0,t + 2sL]), \dacy(X, h_s^{-1}[0,t + 2sL])) \leq (t + 4sL)/(\rho - \eps).\]
   \end{itemize}
\end{lemma}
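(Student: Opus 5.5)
The plan is to show that the smoothed function $h_s$ inherits from $h$ a gradient lower bound on a neighborhood of the relevant level sets. Lemma~\ref{lem:retractions}, adapted to a different target set, will then provide the retraction, and the distance bounds follow exactly as in Theorem~\ref{th:approx_acy_convergence}.

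First, the gradient lower bound. Since $h$ is $2L$-Lipschitz, the first point of Lemma~\ref{lem:convolution} (with $s'=0$) gives $|h_s - h| \le 2Ls$. Hence $h_s^{-1}[0,t+2sL] \subset h^{-1}[0,t+4sL]$ and $h^{-1}[0,t] \subset h_s^{-1}[0, t+2sL]$. Now $\nabla h_s(x) = \int \xi_s(x-y)\nabla h(y)\,dy$ lies in the closed convex hull of $\bigcup_{|y-x|\le s}\clarke h(y)$. I would first prove a uniform version of upper semicontinuity: on the compact set $h^{-1}[t/2, c]$ (compact since $X$ is compact and $h$ is an aura; if needed, restrict to a bounded neighborhood of $X$), there is $S>0$ such that for $|y-x|\le S$, $\clarke h(y)$ lies in the $\rho\eps$-offset of $\clarke h(x)$. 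Next, use the separating vector $W(x)/\norme{W(x)}$ as in \eqref{eq:hyper_support_clarke}. Every element of $\clarke h(x)$ has component at least $\rho$ along this direction, so every element of the convex hull of the nearby Clarke gradients has component at least $\rho(1-\eps)$. Consequently $\norme{\nabla h_s} \ge \rho(1-\eps)$ wherever the whole $s$-ball around $x$ lies in $h^{-1}(0,c]$ and near $x$. On $h_s^{-1}(t+2sL)$ we have $h \in [t, t+4sL]\subset (0,c]$ by the constraint $s\le (c-t)/4L$, and a ball of radius $s$ only moves $h$ by $2Ls$. Taking $S(t,\eps)$ as the resulting compactness constant, chosen as an infimum over $[t',c]$ for $t'\le t$ so that it is monotone in $t$, gives the first bullet.

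For the retraction, I would build it in two stages. The natural route is to work with $h$ itself and the set $Y=h_s^{-1}[0,t+2sL]\subset h^{-1}[0,t+4sL]$. Apply the construction of Lemma~\ref{lem:retractions} to $h$ with constant $\phi=\rho$, which moves points of $h^{-1}[0,t+4sL]$ into $X$ along $1$-Lipschitz trajectories of length at most $(t+4sL)/(\rho(1-\eps'))$. The difficulty is that this flow must keep $Y$ invariant, i.e.\ $h_s$ must not increase along it. So I would instead construct a single vector field that decreases both functions: near $h_s^{-1}(t+2sL)$, combine $-\nabla h_s/\norme{\nabla h_s}$ with the field $V$ of Lemma~\ref{lem:retractions}. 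By the convex-hull argument above, the direction $-W(x)/\norme{W(x)}$ decreases $h_s$ at rate at least $\rho(1-\eps)$, and it also decreases $h$. The interpolated field $V$ of Lemma~\ref{lem:retractions} therefore satisfies $\scal{\nabla h_s}{V}<0$ in the region where the gradient estimate holds, in particular on $\partial Y$. Hence $Y$ is forward invariant, while $h$ still decreases at rate at least $\rho-\eps$ (after renaming $\eps$). Trajectories then reach $X$ within time $(t+4sL)/(\rho-\eps)$, fix $X$ pointwise, and continuity follows as in Lemma~\ref{lem:retractions}.

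Given this deformation retraction of $Y$ onto $X$ with $1$-Lipschitz trajectories of length at most $(t+4sL)/(\rho-\eps)$, the proof of Theorem~\ref{th:approx_acy_convergence} applies verbatim. The end map is a homotopy equivalence whose homotopies have bounded displacement, and it is an acyclic map whose fibers retract to points. Together these give both the $\dhom$ and the $\dacy$ bounds. I expect the main obstacle to be the invariance step: showing that the partition-of-unity field from Lemma~\ref{lem:retractions} is simultaneously a descent direction for $h_s$ on $\partial Y$. This requires choosing the balls $B_{x_i}$ of radius comparable to $S(t,\eps)$ and larger than $s$, so that the same separating direction controls both $\clarke h$ and $\nabla h_s$.
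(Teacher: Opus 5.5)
Your overall architecture matches the paper's. You use one partition-of-unity field built from the directions $-W(x_i)/\norme{W(x_i)}$. This field decreases $h$ at rate $\rho(1-\eps)$, which gives the retraction onto $X$ with short trajectories. It also decreases $h_s$ on $h_s^{-1}(t+2sL)$, which gives regularity of that level and forward invariance of $h_s^{-1}[0,t+2sL]$. The argument of \Cref{th:approx_acy_convergence} then finishes.

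However, the estimate that drives both bullets rests on a false claim. There is in general no $S>0$ such that $\clarke h(y)\subset(\clarke h(x))^{\rho\eps}$ for all $x$ in a compact subset of $h^{-1}(0,c]$ and all $\norme{y-x}\le S$. The inclusion given by upper semicontinuity is not symmetric in $x$ and $y$, and it is not uniform on compacta wherever the Clarke gradient jumps. For example, take the convex aura $h(x)=\max(0,\module{x_1}-1,\module{x_2}-1)$ of the square $[-1,1]^2$, for which $\rho=1/\sqrt2$. The points $(1+a,1+a-\eta)$ and $(1+a-\eta,1+a)$ have Clarke gradients $\{e_1\}$ and $\{e_2\}$ however small $\eta>0$ is. So you cannot use the direction $W(x)$ at the point $x$ itself. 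Likewise, your claim that the field $V$ satisfies $\scal{\nabla h_s}{V}<0$ on $\partial Y$ is not supported by anything you proved. For $y$ with $\rho_i(y)>0$, what is needed is that $\nabla h_s(y)$ be close to $\clarke h(x_i)$ for that center $x_i$, i.e.\ that the ball $B(y,s)$ sit inside the semicontinuity neighborhood of $x_i$.

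The missing ingredient, which the paper supplies in \Cref{lem:double_boule}, is to compare with the centers of the cover rather than with the point itself. For each $x\in h^{-1}(0,c]$, choose $\delta(x)>0$ with $B(x,2\delta(x))$ contained in the semicontinuity neighborhood of $x$ and in $h^{-1}(0,+\infty)$. Then for $z\in B(x,\delta(x))$ and $s\le\delta(x)$ one has $B(z,s)\subset B(x,2\delta(x))$. Hence $\nabla h_s(z)\in(\clarke h(x))^{\rho\eps}$ and $\scal{\nabla h_s(z)}{W(x)/\norme{W(x)}}\ge\rho(1-\eps)$.

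Now extract a locally finite subcover $B(x_i,\delta(x_i))$ of $h^{-1}(0,c]$ and build $V^*$ from it. Set $S(t,\eps)=\min\{\delta(x_i) : B(x_i,\delta(x_i))\cap h^{-1}[t,c]\neq\emptyset\}$. This is a minimum over finitely many indices, hence positive, and it is automatically nondecreasing in $t$. For $y\in h^{-1}[t,c]$ and $s\le S(t,\eps)$, every $i$ with $\rho_i(y)>0$ has $s\le\delta(x_i)$, so $\scal{\nabla h_s(y)}{V^*(y)}\le-\rho(1-\eps)$. Since $\norme{V^*}\le 1$ and $h_s^{-1}(t+2sL)\subset h^{-1}[t,c]$, this gives the first bullet and the invariance at once, and the rest of your argument then goes through.

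Your final sentence anticipates that the cover must be tied to the smoothing scale. What makes this work is the center-based comparison, not a uniform semicontinuity constant.
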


This result is obtained with the help of the following lemma.

\begin{lemma}
\label{lem:double_boule}
Let $\eps > 0$, $x \in h^{-1}(0, c]$, and let $A_x$ be an open neighborhood of $x$ such that for every $y \in A_x$,
\[ \clarke h(y) \subset (\clarke h(x))^{\eps \rho}. \]
Then there exists $\delta(x) > 0$ such that for every $z$ in the open ball $B(x, \delta(x))) \subset h^{-1}(0, + \infty)$ and every $s \leq \delta(x)$, we have 
\[ \nabla h_s(z) \in (\clarke h(x))^{\eps \rho}. \] 
In particular, letting $W(x)$ be the element of least norm in $\clarke h(x)$, we have
\begin{equation}
\label{eq:W_smooth}
\scal{\nabla h_s(z)}{\frac{W(x)}{\norme{W(x)}}} \geq \rho(1 - \eps).
\end{equation}
\end{lemma}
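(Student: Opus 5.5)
The plan is to express $\nabla h_s(z)$ as an average of gradients of $h$ at points near $z$. Then we use that the set $(\clarke h(x))^{\eps\rho}$ is convex, since it is the offset of a convex set, and that $\clarke h(y)$ lies in it for every $y \in A_x$.

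\textbf{Choice of $\delta(x)$.} Since $A_x$ is open, $x \in A_x$ and $h(x)>0$ with $h$ continuous, pick $r>0$ such that $B(x,2r) \subset A_x \cap h^{-1}(0,+\infty)$. Set $\delta(x) = r$. Then for $z \in B(x,\delta(x))$ and $s \leq \delta(x)$, the ball $B(z,s)$ is contained in $B(x,2r) \subset A_x$. The inclusion $B(x,\delta(x)) \subset h^{-1}(0,+\infty)$ required in the statement also holds.

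\textbf{Averaging step.} Since $h$ is Lipschitz, by Rademacher's theorem $\nabla h$ exists almost everywhere and is bounded. Differentiating under the integral (equivalently, integrating by parts, which is justified for Lipschitz functions) gives
\[ \nabla h_s(z) = \int_{\R^d} s^{-d}\xi\big((z-y)/s\big)\, \nabla h(y) \diff y. \]
This is a probability average of the vectors $\nabla h(y)$ over $y \in B(z,s) \subset A_x$. At every point $y$ of differentiability, $\nabla h(y)$ belongs to $\clarke h(y)$: it is the limit of a constant sequence, or one may invoke Clarke's characterization \cite{Clarke1975GeneralizedGA}. Hence $\nabla h(y) \in (\clarke h(x))^{\eps\rho}$ for almost every $y$ in the support of the kernel. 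The set $(\clarke h(x))^{\eps\rho}$ is convex and closed (we take closed offsets, or pass to the closure). An average of elements of a closed convex set stays in the set, by Hahn–Banach or by approximating the integral with convex combinations (Riemann sums). This proves $\nabla h_s(z) \in (\clarke h(x))^{\eps\rho}$.

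\textbf{The inequality \eqref{eq:W_smooth}.} Write $\nabla h_s(z) = u + w$ with $u \in \clarke h(x)$ and $\norme{w} \leq \eps\rho$. By \Cref{eq:hyper_support_clarke}, $\scal{u}{W(x)/\norme{W(x)}} \geq \norme{W(x)} = \Delta(\clarke h(x)) \geq \rho$, because $x \in h^{-1}(0,c]$. By Cauchy–Schwarz, $\scal{w}{W(x)/\norme{W(x)}} \geq -\eps\rho$. Summing the two gives $\rho(1-\eps)$.

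\textbf{Main obstacle.} The main technical point is justifying the averaging step. It rests on the a.e.\ identity $\nabla(h * \xi_s) = (\nabla h) * \xi_s$ for Lipschitz $h$ and on the fact that a Bochner average of a.e.\ values in a closed convex set lies in that set. Both are standard.
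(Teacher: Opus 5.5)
Your proof is correct and follows the same route as the paper's. You choose a ball around $x$ whose doubled radius stays inside $A_x \cap h^{-1}(0,+\infty)$, write $\nabla h_s(z)$ as a kernel average of the a.e.\ gradients $\nabla h(y)$ over $y \in B(z,s)$, use that $(\clarke h(x))^{\eps\rho}$ is closed and convex, and conclude with the supporting inequality for the minimal-norm element $W(x)$. You only spell out details the paper leaves implicit, such as the decomposition $\nabla h_s(z) = u + w$ with $u \in \clarke h(x)$ and $\norme{w} \leq \eps\rho$.
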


\begin{proof}[Proof of \Cref{lem:double_boule}]
Since $A_x$ is an open neighborhood of $x$ and $h(x) > 0$, there exists $r(x) > 0$ such that the open ball $B(x, r(x))$ is included in both $A_x$ and $h^{-1}(0, + \infty)$. Take $\delta(x) = r(x)/2$.
Since $h_s$ is obtained by convolving $h$ with $x \mapsto s^{-d} \xi(x/s)$, where $\xi$ is a smooth nonnegative map of integral one which has support in the centered ball of radius $1$, we have
for every $0 < s  \leq \delta(x)$ and $z \in B(x, \delta(x))$
\begin{equation*}
\label{eq:double_boule}
\begin{aligned}
\nabla h_s(z) = & \int_{B(z,s)} s^{-d} \xi(u/s) \nabla h( z - u) \diff u \\ 
\in &  \; \overline{\Conv( \cup_{y \in B(z,s) } \clarke h(y)) } \subset \clarke h(x)^{\eps \rho}.\\
\end{aligned}
\end{equation*}
\Cref{eq:W_smooth} follows from \Cref{eq:hyper_support_clarke}.
\end{proof}

\begin{proof}[Proof of \Cref{lem:flot_lisse}]
We refine the construction of \Cref{lem:retractions} to obtain a deformation retraction $C^{*}_{\eps}$ similar to $C_{\eps}$ with the additional property that its restriction to $h_s^{-1}(-\infty, t + 2sL]$ is a deformation of this set onto $X = h^{-1}(0)$ with trajectories of length at most $(t + 4sL)/\rho(1 - \eps)$. 

By upper semicontinuity of the Clarke gradient, for each $x \in h^{-1}(0, c]$, there exists an open set $A_x$ containing $x$ such that for any $y$ in $A_x$, $\clarke h(y) \subset \clarke h(x)^{\eps \rho}$. Let $\delta(x)$ be given by \Cref{lem:double_boule}, and let $B_x = B(x, \delta(x)) \subset h^{-1}(0, + \infty)$. The family $B_x$ forms an open covering of the set $h^{-1}(0, c]$, so that as in the proof of \Cref{lem:retractions} by paracompactness we can extract a countable, locally finite cover $(B_{x_i})_{i \in I}$ of $h^{-1}(0, c]$. Since this covering is locally finite, for any $t \in (0, c]$ the set $I_t$ of $i \in I$ such that $B_{x_i}$ has nonempty intersection with the compact set $h^{-1}[t,c]$ is finite. This implies that $S(t,\eps) = \min_{i \in I_t} \delta(x_i)$ is positive and nondecreasing in $t$.

As in the proof of \Cref{lem:retractions}, letting $\rho_i$ be a smooth partition of unity subordinate to the $B_{x_i}$, the flow $C^*_{\eps}$ of the vector field
\begin{equation*}
V^*(x) = - \sum_{i \in I} \rho_i(x) W(x_i)/\norme{W(x_i)}
\end{equation*}
provides a deformation retraction of $h^{-1}[0, C]$ onto $X$ whose trajectories have length at most $C/\rho(1 - \eps)$ for any $C \in (0, c]$. Moreover for $x \in h^{-1}[t,c]$, \Cref{eq:W_smooth} along with the definition of $V^*(x)$ imply as in the proof of \Cref{lem:retractions} that for any $s \leq S(t, \eps)$, 
\begin{equation}
\label{eq:inegalite_h_s}
\scal{\nabla h_s(x)}{V^*(x)} \leq - \rho(1 - \eps).
\end{equation}

Since $\norme{V^*}_{\infty} \leq 1$, this implies that $\norme{\nabla h_s} \geq \rho(1 - \eps)$ on $h^{-1}[t, c]$. Since $\norme{h_s - h}_{\infty} \leq 2sL$ by the first point of \Cref{lem:convolution}, if $s \leq \min(S(t, \eps), (c-t)/4L)$, $h_s^{-1}(t + 2sL) \subset h^{-1}[t, c]$, so that $t + 2sL$ must be a regular value of $h_s$.

Now for any $0 < s \leq \min(S(t, \eps), (c - t)/4L)$ let $x_0 \in h_s^{-1}[0, t + 2sL]$ and consider the trajectory $u \mapsto C^*_{\eps}(u,x)$, i.e. the trajectory satisfying the differential equation $x'(u) = V^*(x(u))$ until $h(x(u))$ reaches 0, where its stops. Assuming that the maximum of $h_s(x(u))$ is greater than $t + 2sL$ and is attained at a positive $u^*$, the derivative of $h_s(x(\cdot))$ at $u^*$ must be zero, which contradicts \Cref{eq:inegalite_h_s}. Thus $x(t)$ stays in $h_s^{-1}[0, t + 2sL]$ and $C^*_{\eps}$ provides a deformation retraction of $h_s^{-1}[0, t + 2sL]$ onto $X$. Since $h_s^{-1}[0,t + 2sL] \subset h^{-1}[0, t + 4sL]$, the trajectories of this deformation retraction have length at most $(t + 4sL)/\rho(1 -\eps)$.
\end{proof}
In \cite[Section 5]{KinematicWDC}, the authors asked whether any WDC set is the intersection of nested smooth domains with normal cycles of uniformly bounded mass. The following theorem gives a positive answer and shows moreover that WDC sets belong to the class of $C^0$-limits of sequences of smooth sets with uniformly bounded normal cycles.

\begin{theorem}
Let $h = f - g$ be a nondegenerate d.c aura such that $f$ and $g$ are $L$-Lipschitz on a neighborhood of $X = h^{-1}(0)$. Then for any positive decreasing sequence $t_n$ going to zero, there exists a positive sequence $s_n$ such that the sequence of compact domains $X_n \coloneqq h_{s_n}^{-1}[0, t_n + 2s_nL]$ satisfies the following for $n$ sufficiently large:
\begin{itemize}
\item $X_n$ is a smooth compact domain of $\R^d$,
\item $\limsup_n M(N_{X_n}) < + \infty$,
\item $X_{n+1} \subset X_n$,
\item $X = \bigcap_{n} X_n$,
\item $X_n$ converges to $X$ both in the acyclic convergence and with respect to the homotopy distance.
\end{itemize}

\end{theorem}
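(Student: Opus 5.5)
The plan is to apply \Cref{lem:flot_lisse} and \Cref{prop:sub_dc_aura} to the mollified auras $h_{s_n}$. Since $f,g$ are $L$-Lipschitz near $X$, the function $h$ is $2L$-Lipschitz there, so \Cref{lem:flot_lisse} applies, possibly after modifying $f,g$ away from a fixed compact neighborhood, which does not affect small sublevel sets. Fix $\eps = \rho/2$ and let $\rho, c$ be as in the paragraph preceding \Cref{lem:flot_lisse}. Discarding finitely many terms, assume $t_1 \leq c/2$. The sequence $s_n$ is chosen inductively and must satisfy three requirements:
\begin{align*}
&s_n \leq \min\bigl(S(t_n,\eps), (c-t_n)/4L\bigr),\\
&s_n \leq t_n,\\
&4 s_n L + t_n < t_{n-1} \quad\text{(a strict decrease of levels).}
\end{align*}
All three can be met with $s_n>0$ small, since $t_n < t_{n-1}$. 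The first item of \Cref{lem:flot_lisse} then shows that $t_n + 2s_nL$ is a regular value of the smooth function $h_{s_n}$. Moreover, $h_{s_n}^{-1}[0,t_n+2s_nL] \subset h^{-1}[0,t_n+4s_nL]$ is compact, so $X_n$ is a smooth compact domain. The second item gives
\[ \max\bigl(\dhom(X,X_n),\dacy(X,X_n)\bigr) \leq (t_n + 4s_nL)/(\rho-\eps) \to 0, \]
which is the convergence claim.

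Nestedness and the intersection property come from the sup bound $\norme{h_s - h}_\infty \leq 2sL$ of \Cref{lem:convolution}. This bound gives
\[ h^{-1}[0,t_n] \subset X_n \subset h^{-1}[0, t_n + 4s_nL]. \]
Combined with the choice $t_{n+1}+4s_{n+1}L < t_n$, it yields $X_{n+1} \subset h^{-1}[0,t_n] \subset X_n$. It also gives $\bigcap_n X_n \subset \bigcap_n h^{-1}[0,t_n+4s_nL] = X$, while $X \subset X_n$ trivially.

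For the mass bound, write $h_{s_n} = f_{s_n} - g_{s_n}$. Mollification preserves convexity and Lipschitz constants, so this is a d.c.\ decomposition with $\lip(f_{s_n}), \lip(g_{s_n}) \leq L$ on the relevant compact set. I will apply \Cref{prop:sub_dc_aura} with $t = t_n + 2s_nL$. This requires a lower bound on the gradient just above that level. Such a bound follows from \eqref{eq:inegalite_h_s} in the proof of \Cref{lem:flot_lisse}, which gives $\norme{\nabla h_{s_n}} \geq \rho(1-\eps) = \rho/2$ on all of $h^{-1}[t_n,c]$. That region contains a neighborhood of the level set $h_{s_n}^{-1}(t_n+2s_nL)$ from above. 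Hence the corresponding $\mu$ is at least $\rho/2$, uniformly in $n$.

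\Cref{prop:sub_dc_aura} then bounds $M(N_{X_n})$ by $C_d K_{d,\rho/2}(\diam + 2dL)^d$. The diameter is uniformly bounded because $X_n \subset h^{-1}[0,c]$, which is compact. The constant $K_{d,\mu}$ must be nonincreasing in $\mu$; this holds provided one uses a lower bound $\mu' = \rho/2$ in the proof of \Cref{prop:normal_bound}, which only needs $\Delta(\clarke h) \geq \mu'$.

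The main obstacle I expect is this last verification. The gradient lower bound from \Cref{lem:flot_lisse} is stated only on the level set itself. One must check that it persists on a one-sided neighborhood $t < h_{s_n} \leq t+\delta$ and yields $\mu \geq \rho/2$ uniformly in $n$. I would handle this by noting that \eqref{eq:inegalite_h_s} holds on all of $h^{-1}[t_n,c]$, which contains $h_{s_n}^{-1}[t_n+2s_nL, t_n+2s_nL+\delta]$ for small $\delta$, because $s_n \leq (c-t_n)/4L$.
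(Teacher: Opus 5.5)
Your proof is correct and follows essentially the same route as the paper: you mollify the aura, use \Cref{lem:flot_lisse} for the smoothness of $X_n$ and for the $\dhom$/$\dacy$ convergence, and use \Cref{prop:sub_dc_aura} with the d.c.\ decomposition $f_{s_n}-g_{s_n}$ for the mass bound. The differences are in the bookkeeping. The paper gets nestedness by building a nonincreasing sequence $s_n=\inf_{k\ge n}\bigl(b_k+(t_n-t_k)/2L\bigr)$ and comparing $h_{s_{n+1}}$ with $h_{s_n}$ directly, whereas your sandwich $h^{-1}[0,t_n]\subset X_n\subset h^{-1}[0,t_n+4s_nL]$ with the gap condition $t_{n+1}+4s_{n+1}L<t_n$ is simpler. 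Your explicit uniform bound $\mu\ge\rho(1-\eps)$, and hence a uniform $K_{d,\mu}$, fills in a step the paper leaves implicit; with the relative form $\rho(1-\eps)$ you want $\eps=1/2$ rather than $\rho/2$ to get $\rho/2$.
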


\begin{proof}

 Since $\norme{h_s - h_{s'}} \leq 2(s - s')L$, observe that when $t - t' \geq 2(s -s')L$, we have 
\begin{equation}
\label{eq:inclusion}
    h_{s'}^{-1}[0, t'] \subset h_s^{-1}[0,t]. 
\end{equation} 
Let $b_n = \min(S(t_1, \eps), \dots, S(t_n, \eps), 1/n)$ for some $0 < \eps < 1$ and $s_{k,n} = b_k + (t_n - t_k)/2L$. Note that $b_n$ is nonincreasing.
Put
\[ s_n = \inf_{k \geq n} s_{k,n}. \]

Observe that for $k \geq n+1$,  $s_{k,n} = s_{k,n+1} + (t_{n} - t_{n+1})/2L$. This yields $s_n \leq s_{k,n+1} + (t_n - t_{n+1})/2L$, and thus $s_n \leq s_{n+1} + (t_n - t_{n+1})/2L$, showing that $X_{n+1} \subset X_n$ by \Cref{eq:inclusion}.
Moreover since $s_{k,n}$ is decreasing in $n$, we have $s_{n+1} = \inf_{k \geq n+1} s_{k,n+1} \leq \inf_{k \geq n+1} s_{k,n}$. Since we also have $s_{n+1, n+1} = b_{n+1} \leq b_n = s_{n,n}$, we get $s_{n+1}\leq s_{n}$.

 Since $s_n \leq 1/n$, $s_n$ must converge to 0, so that for $n$ sufficiently large $s_n \leq (c - t_n)/4L$. By construction $s_n \leq S(t_n, \eps)$. From the first point of \Cref{lem:flot_lisse}, on $h_{s_n}^{-1}(t_n + 2s_nL)$ we have $\norme{\nabla h_{s_n}} \geq \rho(1 - \eps)$, so that $X_n$ is a smooth domain. Moreover the bound of the mass of the normal cycle provided by \Cref{prop:sub_dc_aura} applies. Since the Lipschitz constants of $f_{s_n}$ and $g_{s_n}$ are bounded by that of $f$ and $g$ we have
 \[ \limsup_{n \to \infty} M(N_{X_n}) < + \infty.\]
Furthermore by the second point of \Cref{lem:flot_lisse}, since both $t_n$ and $s_n$ converge to zero, $X_n$ converges both acyclically and with respect to the homotopy distance to $X$. 
Finally if a point $x$ belongs to each $X_n$, then $h_{s_n}(x) \leq t_n + 2s_nL$ and letting $n$ tend to $+\infty$ we obtain $h(x) \leq 0$, so that $\cap_n X_n \subset X$. Conversely $X \subset X_n$ for each $n$ as $\norme{h - h_{s_n}}_{\infty} \leq 2s_nL$.
\end{proof}

\bibliographystyle{alpha} 

\bibliography{ref.bib}

\end{document}